\documentclass[10pt, leqno]{article}

\usepackage{amsmath,amssymb,amsthm, epsfig}

\usepackage{lineno}

\usepackage{amsmath}

\usepackage[pdftex]{color}

\usepackage{comment}

\usepackage{mathtools}
\usepackage{color}

\usepackage{ulem}

\usepackage{mathrsfs}
\usepackage[mathscr]{euscript}

\usepackage{wasysym}

\usepackage{stackrel}

\usepackage{amsmath,amssymb,amsthm, epsfig}
\usepackage[colorlinks=true,urlcolor=blue,
citecolor=red,linkcolor=blue,linktocpage,pdfpagelabels,
bookmarksnumbered,bookmarksopen]{hyperref}
\usepackage{ulem}
\usepackage{dsfont}
\usepackage{mathrsfs}
\usepackage{mathtools}
\usepackage{stackrel}
\usepackage{esint,enumerate}

\usepackage[left=2.5cm,right=2.5cm,top=2.5cm,bottom=2.5cm]{geometry}

\def \dist {\mathrm{dist}}

\newcommand{\defeq}{\mathrel{\mathop:}=}

\newtheorem{theorem}{Theorem}[section]

\newtheorem{lemma}[theorem]{Lemma}

\newtheorem{proposition}[theorem]{Proposition}

\theoremstyle{definition}

\newtheorem{definition}[theorem]{Definition}

\theoremstyle{remark}

\numberwithin{equation}{section}

\usepackage{pgfplots}
\pgfplotsset{compat=newest}
\title{Schauder estimates for a class of fully nonlinear elliptic PDEs: a geometric tangential approach}
\author{\it by \smallskip \\ Junior da Silva Bessa \footnote{\noindent Universidade Estadual de Campinas - UNICAMP. Instituto de Matem\'{a}tica, Estat\'{i}stica e Computa\c{c}\~{a}o Cient\'{i}fica - IMECC. Departamento  de Matemática. Bar\~{a}o Geraldo, Campinas - SP, Brazil. \noindent \texttt{E-mail address: \url{jbessa@unicamp.br}}}, \quad Jo\~{a}o Vitor  da Silva
\footnote{\noindent Universidade Estadual de Campinas - UNICAMP Instituto de Matem\'{a}tica, Estat\'{i}stica e Computa\c{c}\~{a}o Cient\'{i}fica - IMECC. Departamento  de Matemática. Bar\~{a}o Geraldo, Campinas - SP, Brazil. \noindent \texttt{E-mail address: \url{jdasilva@unicamp.br}}},\\ \quad $\&$ \\\quad Laura Ospina\footnote{\noindent Universidade Estadual de Campinas - UNICAMP. Instituto de Matem\'{a}tica, Estat\'{i}stica e Computa\c{c}\~{a}o Cient\'{i}fica - IMECC. Departamento  de Matemática. Bar\~{a}o Geraldo, Campinas - SP, Brazil. \noindent \texttt{E-mail address: \url{l202049@dac.unicamp.br}}}
}
\begin{document}

\maketitle

\begin{abstract}
\noindent  In this paper, we establish local and global Schauder estimates for classical solutions of a class of fully nonlinear elliptic partial differential equations, which are not necessarily convex or concave, under suitable H\"{o}lder continuity assumptions on the data. Our approach is based on a robust blow-up argument, combined with geometric tangential analysis and compactness techniques. This strategy is strongly influenced by methodologies developed in the contemporary theory of nonlinear elliptic PDEs.

\medskip
\noindent \textbf{Keywords}: Fully nonlinear PDEs, local/global Schauder estimates, Blow-up technique, geometric tangential analysis.
\vspace{0.2cm}
	
\noindent \textbf{AMS Subject Classification: Primary 35B65; 35J15; 35J60; Secondary 35B53; 35B45.  
}

\end{abstract}

\section{Introduction}

%%%%%%%%%%%%%%%%%%%%%%%%%%%%%%%%%%%%%%%%%%%%%%%%%%%%%%%%%%%%%%%%%%%%%

The development of Schauder theory has significantly shaped the modern perspective that solving a PDE is, in essence, equivalent to establishing an \textbf{a priori} estimate, i.e., obtaining bounds on a solution before constructing it explicitly.

Beyond existence results (as the compactness properties they encode allow for the application of fixed-point theorems for compact operators), Schauder theory has numerous further applications. These include the analysis of asymptotic behavior—both at infinity and near singularities—as well as the study of qualitative properties of eigenfunctions, such as those arising in Riesz–Fredholm theory~\cite{Brezis1983} and the Krein–Rutman theorem~\cite{KreinRutman1950}. Additionally, Schauder theory plays a pivotal role in the method of sub- and super-solutions for nonlinear problems and in bifurcation theory (see \cite{Nirenberg2001} and \cite{Sattinger1971}). We refer the reader to the survey by Kichenassamy \cite{Kichenassamy2006} for a comprehensive account of Schauder estimates and their applications.

Throughout the last century, several alternative proofs were developed, among which we highlight the work of Campanato \cite{Campanato1964}, who introduced the so-called Campanato spaces; Peetre \cite{Peetre1966}, who employed the convolution of functions; Trudinger \cite{Trud1986}, who utilized mollification techniques,  and Simon \cite{Sim97} by applying a blow-up argument. 
Furthermore, perturbative approaches were developed by Safonov in \cite{Safonov1984} and \cite{Safonov1989}, and Caffarelli in \cite{Caffarelli1989} (see also \cite{CafCabre1995}) for fully nonlinear uniformly elliptic equations, methods that are also applicable to linear equations. More recently, Wang in \cite{Wang2006} proposed a novel and flexible method for establishing Schauder estimates for linear elliptic equations in non-divergence form, which combines an approximating scheme with compactness arguments. The above list of contributions on this topic is far too extensive and incomplete. In Section \ref{State-of-the-Art}, we will present other findings in the setting of fully nonlinear elliptic PDEs.

In the sequel, we present Schauder estimates for linear operators in non-divergence form.

\begin{theorem}[{\bf Interior estimates}]\label{thm:schauder}
Let $\alpha \in (0,1)$ fixed, and $u \in C^{2,\alpha}(\mathrm{B}_1)$ be any solution to
\[
-\mathrm{tr}(\mathfrak{A}(x)D^2u(x)) \defeq -\sum_{i,j=1}^{n} a_{ij}(x) \, \partial_{ij} u(x) = f(x) \quad \text{in } \mathrm{B}_1,
\]
with $f \in C^{0,\alpha}(\mathrm{B}_1)$ and $a_{ij} \in C^{0,\alpha}(\mathrm{B}_1)$, where the matrix $\mathfrak{A}(x) = (a_{ij}(x))_{i,j}$ satisfies the uniform ellipticity condition
\begin{equation}\label{eq:ellipticity}
\lambda|\xi|^2 \leq \sum_{i,j=1}^{n} a_{ij}(x)\xi_i\xi_j \leq \Lambda|\xi|^2 \quad \text{for all } x \in\mathrm{B}_1\,\,\,\text{and}\,\,\, \xi \in \mathbb{R}^n
\end{equation}
for some constants $0 < \lambda \leq \Lambda < \infty$. Then,
\[
\|u\|_{C^{2,\alpha}(\mathrm{B}_{1/2})} \leq \mathrm{C}\left( \|u\|_{L^{\infty}(\mathrm{B}_1)} + \|f\|_{C^{0,\alpha}(\mathrm{B}_1)} \right)
\]
for some constant $\mathrm{C} > 0$ depending only on $\alpha$, $n$, $\lambda$, $\Lambda$, and $\|a_{ij}\|_{C^{0,\alpha}(\mathrm{B}_1)}$.
\end{theorem}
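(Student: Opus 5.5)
I would prove the interior estimate of Theorem \ref{thm:schauder} by a blow-up (compactness) argument in the spirit of Simon \cite{Sim97}, which fits the geometric tangential philosophy of the paper. The first step is to reduce everything to a bound on the top seminorm. Consider the weighted seminorm
\[
[u]^{*}_{2,\alpha} \defeq \sup_{x,y\in \mathrm{B}_1}\, d_{x,y}^{2+\alpha}\,\frac{|D^2u(x)-D^2u(y)|}{|x-y|^{\alpha}}, \qquad d_{x,y}=\min\{\dist(x,\partial \mathrm{B}_1),\dist(y,\partial \mathrm{B}_1)\}.
\]
The target is the bound
\[
[u]^{*}_{2,\alpha}\le \mathrm{C}\left(\|u\|_{L^\infty(\mathrm{B}_1)}+\|f\|_{C^{0,\alpha}(\mathrm{B}_1)}\right).
\]
Lower-order terms are then controlled by the standard interpolation inequality: for every $\varepsilon>0$, the weighted $C^2$ and $C^1$ norms are at most $\varepsilon[u]^*_{2,\alpha}+C_\varepsilon\|u\|_{L^\infty}$. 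This yields the full $C^{2,\alpha}(\mathrm{B}_{1/2})$ bound. Since $u\in C^{2,\alpha}(\mathrm{B}_1)$ only locally, I would work on $\mathrm{B}_r$ with $r<1$ and let $r\to1$, so that all quantities are finite.

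Next I argue by contradiction. Suppose there are $u_k, a^k_{ij}, f_k$ with uniform ellipticity and $\|a^k_{ij}\|_{C^{0,\alpha}}\le M$, but
\[
[u_k]^*_{2,\alpha} > k\left(\|u_k\|_{L^\infty}+\|f_k\|_{C^{0,\alpha}}\right).
\]
Normalizing, I take $[u_k]^*_{2,\alpha}=1$, so that $\|u_k\|_\infty+\|f_k\|_{C^{0,\alpha}}<1/k$. I pick points $x_k,y_k$ nearly realizing the supremum, set $\rho_k=|x_k-y_k|$, and let $d_k$ be the associated distance. A standard check using the interpolation inequality and $\|u_k\|_\infty\to0$ shows that $\rho_k/d_k\to 0$. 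Indeed, otherwise the seminorm is controlled by the weighted $C^2$ norm, hence by $\varepsilon+C_\varepsilon/k$, a contradiction.

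The blow-up is then defined by
\[
v_k(z)=\frac{d_k^{2+\alpha}}{\rho_k^{2+\alpha}}\Big(u_k(x_k+\rho_k z)-P_k(\rho_k z)\Big),
\]
where $P_k$ is the second-order Taylor polynomial of $u_k$ at $x_k$. The $v_k$ are defined on balls $\mathrm{B}_{R_k}$ with $R_k\sim d_k/\rho_k\to\infty$. They satisfy $v_k(0)=0$, $Dv_k(0)=0$ and $D^2v_k(0)=0$, with $[D^2v_k]_{C^{0,\alpha}(\mathrm{B}_R)}\le C$ on compacts. Moreover $|D^2v_k(0)-D^2v_k(e_k)|\ge 1/2$, where $e_k=(y_k-x_k)/\rho_k$. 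By Arzel\`a–Ascoli, a subsequence converges in $C^2_{loc}(\mathbb{R}^n)$ to some $v$ with $[D^2v]_{C^{0,\alpha}(\mathbb{R}^n)}\le C$ and $|D^2v(0)-D^2v(e)|\ge 1/2$.

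The main obstacle is identifying the limit equation. I would write the rescaled equation as
\[
a^k_{ij}(x_k)\,\partial_{ij}v_k = \frac{d_k^{2+\alpha}}{\rho_k^{\alpha}}\Big[f_k(x_k+\rho_k z)+\big(a^k_{ij}(x_k+\rho_k z)-a^k_{ij}(x_k)\big)\partial_{ij}u_k(x_k+\rho_k z)\Big]\cdot(-1)+\text{const}.
\]
Here the constant $-\frac{d_k^{2+\alpha}}{\rho_k^\alpha}\,a_{ij}^k(x_k)\partial_{ij}P_k$ is removed by subtracting the value of the equation at $z=0$. After this subtraction, the right-hand side involves only differences. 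The $f_k$-part is bounded by $d_k^{2+\alpha}\|f_k\|_{C^{0,\alpha}}|z|^\alpha\to0$. For the coefficient part I use two estimates:
\[
|a^k(x_k+\rho_kz)-a^k(x_k)|\le M\rho_k^\alpha|z|^\alpha, \qquad d_k^2|D^2u_k|\le \varepsilon+C_\varepsilon/k.
\]
Together these give a bound $\lesssim d_k^{\alpha}(\varepsilon+C_\varepsilon/k)|z|^\alpha\to0$. There is one more term, namely $(a^k(x_k+\rho_kz)-a^k(x_k))\,\partial_{ij}u_k(x_k)$, which is not a difference. I would absorb it by writing $\partial_{ij}u_k(\cdot)=\partial_{ij}u_k(x_k)+\big(\partial_{ij}u_k(\cdot)-\partial_{ij}u_k(x_k)\big)$. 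The $D^2u_k(x_k)$ part is then bounded by $M\rho_k^\alpha d_k^{2+\alpha}\rho_k^{-\alpha}|D^2u_k(x_k)|$, which tends to $0$ by the same interpolation bound. The remaining part is controlled by the seminorm times $\rho_k^{\alpha}$, which is also small.

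Passing to a subsequence with $a^k_{ij}(x_k)\to \bar a_{ij}$ (still elliptic), the limit satisfies $\bar a_{ij}\partial_{ij}v=0$ in $\mathbb{R}^n$. After a linear change of variables this becomes $\Delta w=0$. Then $D^2w$ is harmonic, globally $\alpha$-H\"older with $\alpha<1$, and hence constant by Liouville's theorem (gradient estimates give $|D^3w|\le CR^{\alpha-1}\to0$). This contradicts $|D^2v(0)-D^2v(e)|\ge1/2$ with $|e|=1$, which completes the argument. The constant depends only on $n,\alpha,\lambda,\Lambda$ and $M$, since these are the only quantities fixed along the sequence.
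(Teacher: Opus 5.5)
Your proof is correct, and it localizes the estimate differently from the paper. The paper only quotes this classical statement without proof. Its own argument is the proof of Theorem~\ref{T1}, which contains the linear case $F(\mathrm{M},x)=\mathrm{tr}(\mathfrak{A}(x)\mathrm{M})$, $\mathfrak{B}\equiv 0$.

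The paper works in two steps.
\begin{enumerate}
\item It first proves, by contradiction, the unweighted $\varepsilon$-inequality of Lemma~\ref{Interpolationinterior}: $[D^2u]_{\alpha,\mathrm{B}_{1/2}}\le \varepsilon[D^2u]_{\alpha,\mathrm{B}_1}+\mathrm{C}_\varepsilon(\|D^2u\|_{L^\infty(\mathrm{B}_1)}+\|f\|_{C^{0,\alpha}(\mathrm{B}_1)})$. There the $\varepsilon_0$-term gives non-degeneracy of the blow-up, and the factor $j$ in front of $\|D^2u_j\|_{L^\infty}$ both forces $\rho_j\to0$ and kills the coefficient oscillation.
\item It then applies this on every ball $\mathrm{B}_\rho(x_0)\subset\mathrm{B}_1$ after rescaling, absorbs the $\varepsilon$-term with the subadditivity Lemma~\ref{lemma-abstrato}, and finishes with unweighted interpolation.
\end{enumerate}

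You instead build the localization into the interior seminorm $[u]^*_{2,\alpha}$, so a single contradiction argument suffices. Weighted interpolation together with $\|u_k\|_{L^\infty}<1/k$ gives you two things at once:
\begin{enumerate}
\item $\rho_k/d_k\to0$, which replaces the paper's $\rho_j\to0$ and also makes the blow-up domains exhaust $\mathbb{R}^n$ with a uniform H\"older bound;
\item smallness of $d_k^2|D^2u_k|$, which kills the coefficient term.
\end{enumerate}

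Your route needs no absorption lemma and no a priori bound $\|u\|_{C^{2,\alpha}(\mathrm{B}_1)}\le\mathrm{C}_0$, since linearity lets you normalize $[u_k]^*_{2,\alpha}=1$. Because $v_k\to v$ in $C^2_{loc}$, the limit equation also holds classically, without viscosity stability. The paper's route keeps the blow-up lemma on fixed balls with unweighted norms. That is the form it reuses for nonlinear $F$, where $\overline F_j$ must converge to the tangential linear operator; this uses $\theta_j\le \mathrm{C}_0\rho_j^\alpha\to0$. It also reuses that form for the half-ball Lemma~\ref{Interpolationboundary}.

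A minor point: once you subtract the equation at $z=0$ with $a^k_{ij}(x_k)$ on the left, the only coefficient term is $(a^k_{ij}(x_k+\rho_kz)-a^k_{ij}(x_k))\,\partial_{ij}u_k(x_k+\rho_kz)$, which you already bounded. Your ``one more term'' paragraph is just an alternative splitting of that same term — harmless but redundant.
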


Next, we present the counterpart of Theorem \ref{thm:schauder} in the context of global estimates (up to the boundary) for sufficiently smooth domains, and assuming the appropriate regularity of the coefficients and boundary datum.

\begin{theorem}[{\bf Boundary regularity}]\label{thm:boundary_regularity}
%Let $\alpha \in (0, 1)$ and $k \in \mathbb{N}$ with $k \geq 2$, and 
Let $\Omega \subset \mathbb{R}^n$ be a bounded domain of class $C^{2,\alpha}$. Let $u \in C^{2,  \alpha}(\Omega)$ be a solution to
\begin{equation}\label{eq:poisson_dirichlet}
\left\{
\begin{array}{rclcl}
-\displaystyle \sum_{i,j=1}^{n} a_{ij}(x) \, \partial_{ij} u(x) & = & f(x) & \text{in } & \Omega, \\
u(x) & = & g(x) & \text{on } & \partial\Omega,
\end{array}
\right.
\end{equation}
for $f \in C^{0,\alpha}(\Omega)$, $a_{ij} \in C^{0,\alpha}(\Omega)$ fulfilling the
ellipticity conditions for some $0< \lambda\leq \Lambda$, and $g \in C^{2,\alpha}(\partial\Omega)$. Then, the following estimate holds
\[
\|u\|_{C^{2,\alpha}(\overline{\Omega})} \leq \mathrm{C}\left( \|u\|_{L^{\infty}(\Omega)} + \|f\|_{C^{0,\alpha}(\Omega)} + \|g\|_{C^{2,\alpha}(\partial\Omega)} \right),
\]
for some constant $\mathrm{C}>0$ depending only on $\alpha$, $n$, $\lambda$, $\Lambda$, $\|\mathfrak{a}_{ij}\|_{C^{0, \alpha}(\Omega)}$ and $\Omega$.
\end{theorem}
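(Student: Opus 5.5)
The plan is to deduce the global estimate from the interior estimate in Theorem \ref{thm:schauder} together with a boundary estimate near a flat boundary, glued by a covering argument.

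\textbf{Reduction to zero boundary data and a flat boundary.} Since $\partial\Omega \in C^{2,\alpha}$, I first extend $g$ to a function $G \in C^{2,\alpha}(\overline{\Omega})$ with $\|G\|_{C^{2,\alpha}(\overline{\Omega})} \le \mathrm{C}\|g\|_{C^{2,\alpha}(\partial\Omega)}$. The function $w = u - G$ then solves the same equation with right-hand side $f + \mathrm{tr}(\mathfrak{A}D^2G) \in C^{0,\alpha}$ and vanishes on $\partial\Omega$. Next, I fix $x_0 \in \partial\Omega$ and a $C^{2,\alpha}$ diffeomorphism $\Phi$ that flattens $\partial\Omega$ near $x_0$, mapping $\Omega\cap \mathrm{B}_r(x_0)$ onto a region containing $\mathrm{B}_\rho^+ = \mathrm{B}_\rho\cap\{x_n>0\}$. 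The transformed function $v = w\circ\Phi^{-1}$ solves
\[
-\tilde a_{ij}\partial_{ij}v - \tilde b_i\partial_i v = \tilde f \ \text{in } \mathrm{B}_\rho^+, \qquad v=0 \ \text{on } \{x_n=0\}.
\]
Here the new coefficients $\tilde a_{ij} = a_{kl}\partial_k\Phi_i\partial_l\Phi_j$ are $C^{0,\alpha}$ and uniformly elliptic, with constants controlled by $\lambda$, $\Lambda$ and $\|\Phi\|_{C^{2,\alpha}}$. The lower-order coefficients $\tilde b_i$, which involve $D^2\Phi$, are only $C^{0,\alpha}$; this is exactly why $C^{2,\alpha}$ regularity of the domain is needed. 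I treat the term $\tilde b\cdot Dv$ as part of the right-hand side.

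\textbf{Flat boundary estimate (main step).} I will prove
\[
\|v\|_{C^{2,\alpha}(\mathrm{B}^+_{\rho/2})} \le \mathrm{C}\big(\|v\|_{L^\infty(\mathrm{B}^+_\rho)} + \|\tilde f\|_{C^{0,\alpha}(\mathrm{B}^+_\rho)}\big).
\]
I would use the same perturbative scheme as for the interior case, via freezing coefficients at boundary points. At a point $y\in\{x_n=0\}$, a linear change of variables reduces $\tilde a_{ij}(y)$ to the identity while keeping a half-space. For constant coefficients with zero data on $\{x_n=0\}$, odd reflection across the hyperplane gives a harmonic function, so the classical $C^{2,\alpha}$ (indeed smooth) bounds hold up to the flat boundary. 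An approximation argument then follows. On dyadic half-balls $\mathrm{B}^+_{2^{-k}}(y)$ I compare $v$ with the solution of the frozen problem having the same boundary values. The ABP/maximum principle bounds the difference by $r^{2+\alpha}$ times the oscillation of the coefficients plus the $C^{0,\alpha}$ seminorm of the data. Iterating produces quadratic polynomials $P_y$, with $P_y$ vanishing on $x_n=0$, such that $|v-P_y|\le \mathrm{C} r^{2+\alpha}$.

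Combining these boundary expansions with the interior expansions from Theorem \ref{thm:schauder} at points $x$ with $x_n>0$, rescaled to balls of radius $x_n$, yields the $C^{2,\alpha}$ seminorm bound by the standard case analysis comparing $|x-z|$ with $\max(x_n,z_n)$. The first-order term $\tilde b\cdot Dv$ is absorbed using the interpolation inequality $\|v\|_{C^{1,\alpha}}\le \varepsilon\|v\|_{C^{2,\alpha}}+\mathrm{C}_\varepsilon\|v\|_{L^\infty}$. To do this absorption rigorously, one works with weighted seminorms or a small radius $\rho$, since the bound is a priori: $u$ is assumed to lie in $C^{2,\alpha}$.

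\textbf{Covering.} By compactness, finitely many boundary charts together with one interior region at positive distance from $\partial\Omega$ cover $\overline{\Omega}$. On each piece I apply either the flat estimate or Theorem \ref{thm:schauder}, then transfer back through $\Phi$ and add back $G$. This gives the stated bound with $\mathrm{C}$ depending on $\alpha$, $n$, $\lambda$, $\Lambda$, $\|a_{ij}\|_{C^{0,\alpha}}$ and $\Omega$.

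I expect the main obstacle to be the flat boundary iteration, specifically keeping the approximating polynomials consistent with the boundary condition and matching the boundary and interior expansions uniformly.
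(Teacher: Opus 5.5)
The paper gives no proof of this statement. Theorem \ref{thm:boundary_regularity} is quoted in the introduction as the classical linear result. The paper's own boundary argument covers only the flat configuration $\mathrm{B}_1^+$, $\mathrm{T}_1$ of Theorem \ref{T2}. Measured against that argument, your proposal is correct and takes a genuinely different route.

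The paper proves the $\varepsilon$-inequality of Lemma \ref{Interpolationboundary} by contradiction and compactness:
\begin{itemize}
\item it picks a near-extremal pair $x_j,y_j$ and blows up at scale $\rho_j=|x_j-y_j|$;
\item it splits according to whether $\operatorname{dist}(x_j,\mathrm{T}_1)/\rho_j$ is unbounded (whole-space limit) or bounded (half-space limit vanishing on $\{x_n=0\}$);
\item it kills the limit profile by Liouville, after odd reflection in the half-space case;
\item it passes to the estimate through the abstract Lemma \ref{lemma-abstrato} plus interpolation.
\end{itemize}
You follow the classical constructive route. You extend $g$ and flatten $\partial\Omega$ with a $C^{2,\alpha}$ chart, which is exactly where the merely $C^{0,\alpha}$ drift and the $C^{2,\alpha}$ regularity of the domain enter. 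The paper never carries out this step, since it works only on $\mathrm{B}_1^+$. You then derive boundary expansions by a direct iteration against frozen constant-coefficient problems, made smooth up to $\{x_n=0\}$ by reflection. Finally you glue these to rescaled interior estimates and cover $\overline{\Omega}$.

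Your route gives explicit constants and treats curved boundaries honestly. The price is the polynomial bookkeeping and the interior/boundary matching. The blow-up route gets that matching for free from the dichotomy on $\operatorname{dist}(x_j,\mathrm{T}_1)/\rho_j$. Its constants, however, are non-constructive, and it relies on the a priori bound $\|u\|_{C^{2,\alpha}}\le \mathrm{C}_0$ for compactness.

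Three details in your sketch need care.
\begin{enumerate}
\item The ABP comparison at scale $r$ gives an error $\mathrm{C}r^{2}\sup_{\mathrm{B}_r^+(y)}|\mathrm{tr}((\tilde a-\tilde a(y))D^2v)|+\dots\le \mathrm{C}r^{2+\alpha}[\tilde a]_{\alpha}\|D^2v\|_{L^\infty}+\dots$. So your expansion constant contains $\|D^2v\|_{L^\infty}$, and through $\tilde b\cdot Dv$ also $\|v\|_{C^{1,\alpha}}$, but never $[D^2v]_{\alpha}$. This is precisely what makes absorption possible.
\item A small radius $\rho$ alone does not close that absorption for the local half-ball estimate you display. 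Interpolating $\|D^2v\|_{L^\infty(\mathrm{B}_\rho^+)}$ puts $[D^2v]_{\alpha}$ on $\mathrm{B}_\rho^+$, a larger set than the $\mathrm{B}_{\rho/2}^+$ on the left. You can use weighted seminorms, or Lemma \ref{lemma-abstrato}, or absorb once globally on $\overline{\Omega}$ after the covering. The global option works because interpolation holds on $C^{2,\alpha}$ domains and $\|u\|_{C^{2,\alpha}(\overline{\Omega})}<\infty$; the hypothesis $u\in C^{2,\alpha}$ must be read this way.
\item The frozen problem has a nonzero constant right-hand side (including the frozen value of the drift term). Before reflecting, subtract $c\,x_n^2$ with $2c\,\tilde a_{nn}(y)$ equal to that constant. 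Then every approximating polynomial has the form $x_n\ell(x)$ with $\ell$ affine, and so vanishes on $\{x_n=0\}$ automatically.
\end{enumerate}
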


\bigskip

Therefore, motivated by the previous introduction, this article has a twofold objective. First, we establish local \textit{a priori} Schauder estimates for classical solutions to the equation
$$
F(D^{2}u,x) +  \langle \mathfrak{B}(x), Du\rangle= f(x) \quad \text{in} \quad \mathrm{B}_1 \subset \mathbb{R}^n,
$$
under suitable assumptions on the data, which will be specified shortly. Here, $F:\text{Sym}(n)\times \mathrm{B}_1 \to \mathbb{R}$ denotes a fully nonlinear second-order operator, potentially including nonlinearities that lack a convexity/concavity structure. Furthermore, in the final part of the paper, we derive global estimates for classical solutions to the corresponding Dirichlet problem (with a flattened boundary),
$$
\left\{
\begin{array}{rcl}
F(D^2u,x) + \langle \mathfrak{B}(x), Du\rangle&=& f(x) \quad \text{in} \quad \mathrm{B}^{+}_{1}, \\
u(x) &=& g(x) \quad \text{on} \quad \mathrm{T}_{1},
\end{array}
\right.
$$
where $\mathrm{B}^{+}_1 \coloneqq \{x = (x^{\prime}, x_n) \in \mathbb{R}^n : x_n > 0 \,\, \text{and} \,\, |x| < 1\}$ and $\mathrm{T}_1 \coloneqq \{x = (x^{\prime}, x_n) \in \mathbb{R}^n : x_n = 0 \,\, \text{and} \,\, |x| < 1\}$.

\medskip

Throughout this manuscript, we assume the following structural conditions (cf. \cite{dosPrazTei2016}):
\begin{itemize}
\item[{\bf ($\mathrm{A}1$)}] ({\bf Uniform Ellipticity}) The operator $F:\text{Sym}(n)\times \Omega \to \mathbb{R}$ is fully nonlinear and uniformly elliptic, with ellipticity constants $0 < \lambda \leq \Lambda$. Specifically, we require
\begin{equation}\label{Unif_Elip}
\mathcal{M}^{-}_{\lambda, \Lambda}(\mathrm{N}) \leq F(\mathrm{M} + \mathrm{N},x) - F(\mathrm{M},x) \leq \mathcal{M}^{+}_{\lambda,\Lambda}(\mathrm{N}),
\end{equation}
for every $x \in \Omega$ and for all $\mathrm{M}, \mathrm{N} \in \text{Sym}(n)$ with $\mathrm{N} \geq 0$ (in the matrix sense), where
$$
\mathcal{M}_{\lambda, \Lambda}^+(\mathrm{M}) \coloneqq \sup_{\mathbf{A} \in \mathfrak{A}_{\lambda, \Lambda}} \operatorname{tr}(\mathbf{A} \mathrm{M}) \quad \text{and} \quad 
\mathcal{M}_{\lambda, \Lambda}^-(\mathrm{M}) \coloneqq \inf_{\mathbf{A} \in \mathfrak{A}_{\lambda, \Lambda}} \operatorname{tr}(\mathbf{A} \mathrm{M})
$$
denote the Pucci extremal operators, and
$$
\mathfrak{A}_{\lambda, \Lambda} \coloneqq \{\mathbf{A} \in \text{Sym}(n): \lambda \mathrm{Id}_n \leq \mathbf{A} \leq \Lambda \mathrm{Id}_n\}.
$$

\item[{\bf ($\mathrm{A}2$)}] ({\bf Differentiability of the Nonlinearity}) We assume that $F(\cdot,x)\in C^1(\text{Sym}(n))$ for all $x\in \Omega$, and there exists a modulus of continuity \(\omega: [0, \infty) \to [0, \infty)\) such that
$$
\|\mathrm{D}_{\mathrm{M}}F(\mathrm{X},x) - \mathrm{D}_{\mathrm{M}}F(\mathrm{Y},x)\| \leq \omega(\|\mathrm{X} - \mathrm{Y}\|),
$$
for all $x \in \Omega$ and for all \(\mathrm{X}, \mathrm{Y}, red\mathrm{M} \in \text{Sym}(n)\), 
where
$$
\mathrm{D}_{\mathrm{M}}F(\mathrm{X}_0,x) = \mathrm{D}F(\mathrm{X}_0,x)(\mathrm{M}) \coloneqq \lim_{t \to 0} \frac{F(t \mathrm{M} + \mathrm{X}_0,x) - F(\mathrm{X}_0,x)}{t}
$$
denotes the \textit{G\^{a}teaux derivative} of $F$ with respect to the first variable.

\item[{\bf ($\mathrm{A}3$)}] ({\bf Regularity of the Data}) There exists a modulus of continuity $\tau: [0, \infty) \to [0, \infty)$ such that \(\tau(t)\leq \mathrm{c}t^{\alpha}\) (for a dimensional constant $\mathrm{c}> 0$) and
$$
\left\{
\begin{array}{l}
|F(\mathrm{X},x) - F(\mathrm{X},y)| \leq \tau(|x - y|) \|\mathrm{X}\|_{\text{Sym}(n)}, \quad \forall\, x, y \in \Omega,\,\, \mathrm{X} \in \text{Sym}(n), \\
|f(x) - f(y)| \leq \tau(|x - y|), \quad \forall\, x, y \in \Omega, \\
\|\mathfrak{B}(x) - \mathfrak{B}(y)\| \leq \tau(|x - y|), \quad \forall\, x, y \in \Omega. \\
%\{x = (x^{\prime}, x_n) \in \mathbb{R}^n : x_n = 0\}.
\end{array}
\right.
$$
\end{itemize}

We denote by the following class the set of fully nonlinear operators satisfying the above assumptions:
$$
\mathscr{F} \coloneqq \{F: \text{Sym}(n)\times \Omega \to \mathbb{R} \,:\, F \text{ satisfies the assumptions}\,\,\,\, (\mathbf{A1})\text{--}(\mathbf{A3})\}.
$$

\bigskip

In this setting, we are able to derive the following local \textit{a priori} Schauder estimates.

\begin{theorem}[{\bf Interior Schauder estimates}]\label{T1}
Suppose the assumptions $(\mathrm{A1})-(\mathrm{A3})$ are in force with $\tau(t) \leq \mathrm{c} t^{\alpha}$ (for some \(\alpha\in (0,1)\)). Let \(u\in C^{2,\alpha}(\mathrm{B}_{1})\)  be a solution to
\[
F(D^{2}u,x)+\langle \mathfrak{B}(x),Du\rangle=f(x)\,\,\, \mathrm{in}\,\,\, \mathrm{B}_{1},
\]
where %\(F\in \mathscr{F}[(\lambda, \Lambda), \omega]\), \(f\in C^{0,\alpha}(\mathrm{B}_{1})\) and \
\(\|u\|_{C^{2,\alpha}(\mathrm{B}_{1})}\leq \mathrm{C}_{0}\) for some \(\mathrm{C}_{0}>0\). Then, there holds
\begin{eqnarray}\label{estinterior}
\|u\|_{C^{2,\alpha}(\mathrm{B}_{1/2})}\leq \mathrm{C}(n,\lambda,\Lambda,\alpha, \omega)\left(\|u\|_{L^{\infty}(\mathrm{B}_{1})}+\|f\|_{C^{0,\alpha}(\mathrm{B}_{1})}+\|\mathfrak{B}\|_{C^{0,\alpha}(\mathrm{B}_{1};\mathbb{R}^{n})}\right).  
\end{eqnarray}
\end{theorem}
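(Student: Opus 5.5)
We argue by a Simon-type blow-up and contradiction, combined with compactness and a Liouville theorem for the limiting equation. The first step is to reduce to a seminorm estimate. By standard interpolation inequalities it suffices to prove
\[
[D^2u]_{C^{0,\alpha}(\mathrm{B}_{1/2})}\leq \varepsilon [D^2u]_{C^{0,\alpha}(\mathrm{B}_1)} + \mathrm{C}_\varepsilon\left(\|u\|_{L^\infty(\mathrm{B}_1)}+\|f\|_{C^{0,\alpha}(\mathrm{B}_1)}+\|\mathfrak{B}\|_{C^{0,\alpha}(\mathrm{B}_1;\mathbb{R}^n)}\right).
\]
More precisely, we prove a weighted version with $d_x=\mathrm{dist}(x,\partial \mathrm{B}_1)$, namely a bound on $\sup_{x,y}\min(d_x,d_y)^{2+\alpha}|D^2u(x)-D^2u(y)|/|x-y|^\alpha$. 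We then absorb the $\varepsilon$-term by the usual covering/iteration lemma, and control $\|Du\|,\|D^2u\|$ by interpolation. The a priori bound $\|u\|_{C^{2,\alpha}(\mathrm{B}_1)}\leq \mathrm{C}_0$ guarantees that $D^2u$ stays in a fixed compact subset of $\text{Sym}(n)$. On that set, (A2) gives a uniform modulus $\omega$ for $D_{\mathrm{M}}F$; this is what allows linearization in the limit.

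Suppose the estimate fails. Then there are $F_k\in\mathscr{F}$, $u_k$, $f_k$, $\mathfrak{B}_k$ and points $x_k\neq y_k$ such that the (weighted) Hölder quotient of $D^2u_k$ at $(x_k,y_k)$ exceeds $k$ times the right-hand side. After normalizing so the quotient is essentially the supremum, set $r_k=|x_k-y_k|$. The interpolation inequalities force $r_k\to0$ relative to $d_{x_k}$. We rescale around $x_k$ by
\[
v_k(z)=\frac{u_k(x_k+r_kz)-P_k(r_kz)}{r_k^{2+\alpha}[D^2u_k]_{\alpha}},
\]
where $P_k$ is the second-order Taylor polynomial of $u_k$ at $x_k$. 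Then $v_k(0)=|Dv_k(0)|=|D^2v_k(0)|=0$, $[D^2v_k]_{C^{0,\alpha}}\leq 1$ on balls $\mathrm{B}_{R_k}$ with $R_k\to\infty$, and $|D^2v_k(e_k)|\ge 1/2$ for some unit vector $e_k$.

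Next we derive the equation for $v_k$. Subtracting the equation at the frozen point $x_k$ and using the mean value theorem in the matrix variable, $v_k$ solves
\[
\mathrm{tr}(\mathbf{A}_k(z)D^2v_k)=\text{error},
\]
with $\mathbf{A}_k(z)=\int_0^1 D_{\mathrm{M}}F_k(D^2u_k(x_k)+tr_k^\alpha[D^2u_k]_\alpha D^2v_k,\,x_k+r_kz)\,dt\in\mathfrak{A}_{\lambda,\Lambda}$. The error collects three contributions: the $x$-oscillation of $F$, which by (A3) is $\lesssim \tau(r_k|z|)\|D^2u_k\|/(r_k^\alpha[D^2u_k]_\alpha)$; the oscillation of $f$; and the drift terms $\langle\mathfrak{B}_k,Du_k\rangle$. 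Each of these is bounded by $|z|^\alpha$ times a quantity that tends to $0$ by the contradiction hypothesis. Since $r_k^\alpha[D^2u_k]_\alpha\leq 2\mathrm{C}_0 r_k^\alpha\to0$, the bound $|D^2v_k|\le|z|^\alpha$ together with (A2) gives $\mathbf{A}_k(z)-D_{\mathrm{M}}F_k(D^2u_k(x_k),x_k)\to 0$ locally uniformly. Passing to subsequences with Arzelà–Ascoli, we get $v_k\to v$ in $C^2_{loc}(\mathbb{R}^n)$, $D_{\mathrm{M}}F_k(D^2u_k(x_k),x_k)\to\mathbf{A}_\infty\in\mathfrak{A}_{\lambda,\Lambda}$, and $v$ solves the constant-coefficient equation $\mathrm{tr}(\mathbf{A}_\infty D^2v)=0$ in $\mathbb{R}^n$ with $[D^2v]_{C^{0,\alpha}(\mathbb{R}^n)}\le1$.

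To conclude, a Liouville argument applies: $D^2v$ is $\mathbf{A}_\infty$-harmonic with growth $O(|z|^\alpha)$ and $\alpha<1$, so $D^2v$ is constant, and it vanishes since $D^2v(0)=0$. This contradicts $|D^2v(e)|\ge1/2$ for a limit point $e$ of $e_k$.

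We expect the main obstacle to be the linearization step. There we must check carefully that the error terms from the $x$-dependence of $F$ and from the drift really vanish after scaling. This hinges on using $\tau(t)\le \mathrm{c}t^\alpha$ together with the normalization by $[D^2u_k]_\alpha$, which blows up relative to the data, and on handling the weight $d_x$ so that the rescaled domains exhaust $\mathbb{R}^n$. The constant should depend on $\omega$ through this step, and implicitly on $\mathrm{C}_0$.
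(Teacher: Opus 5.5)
Your blow-up is the same Simon-type argument the paper runs in Lemma~\ref{Interpolationinterior}: contradiction, a near-maximizing pair, subtraction of the Taylor polynomial at $x_k$, normalization by $r_k^{2+\alpha}$ times a H\"older seminorm, an entire constant-coefficient limit, and Liouville for $D^2v$. The core is sound, and the differences are organizational. The paper proves the $\varepsilon$-inequality with $\|D^2u\|_{L^\infty(\mathrm{B}_1)}$ on the right, interpolates, rescales $u_\rho(x)=u(\rho x+x_0)$, and applies the covering Lemma~\ref{lemma-abstrato}. You announce both this and Simon's weighted seminorm, and you should commit to one.

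In the weighted version there is no $\varepsilon$-term and no covering lemma. However, $v_k$ must then be normalized by the weighted quotient (essentially $Q_k d_k^{-2-\alpha}$ with $d_k=\min(d_{x_k},d_{y_k})$), not by $[D^2u_k]_{\alpha,\mathrm{B}_1}$, or you lose the lower bound at $e_k$. In the $\varepsilon$-version you normalize by $[D^2u_k]_{\alpha,\mathrm{B}_1}$ and get only $\|D^2v_k(e_k)\|>\varepsilon_0/2$, as the paper does. The weighted route never rescales the operator, which sidesteps a point the covering route must address: $F_\rho(\mathrm{M},x)=\rho^2F(\rho^{-2}\mathrm{M},\rho x+x_0)$ has (A2)-modulus $\omega(\rho^{-2}\,\cdot\,)$, so using Lemma~\ref{Interpolationinterior} uniformly in $\rho$ needs an extra remark. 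Your passage to the limit through $\mathrm{tr}(\mathbf{A}_k(z)D^2v_k)=\text{error}$ is also cleaner than the paper's. It needs only compactness of $\mathfrak{A}_{\lambda,\Lambda}$ and holds pointwise because $v_k\to v$ in $C^2_{\mathrm{loc}}$, whereas the paper extracts a limit $F_0\in\mathscr{F}$ and invokes viscosity stability.

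One step is not justified as written. (A2) controls $D_{\mathrm{M}}F_k$ only in the matrix variable. It therefore does not give $D_{\mathrm{M}}F_k(D^2u_k(x_k),x_k+r_kz)-D_{\mathrm{M}}F_k(D^2u_k(x_k),x_k)\to0$, and no hypothesis states directly that $D_{\mathrm{M}}F$ is continuous in $x$. The convergence does follow by combining (A2) with (A3) through a difference quotient: for $\|\mathrm{N}\|=1$ and $h>0$,
\[
|D_{\mathrm{M}}F(\mathrm{X},x)(\mathrm{N})-D_{\mathrm{M}}F(\mathrm{X},y)(\mathrm{N})|\leq \frac{2\tau(|x-y|)(\|\mathrm{X}\|+h)}{h}+2\omega(h).
\]
Taking $h=\tau(|x-y|)^{1/2}$ and using $\|\mathrm{X}\|=\|D^2u_k(x_k)\|\leq\mathrm{C}_0$ gives the claim.

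Alternatively, bypass the frozen matrices. Write $\overline{F}_k(\mathrm{X},z)=\theta_k^{-1}[F_k(D^2u_k(x_k)+\theta_k\mathrm{X},x_k+r_kz)-F_k(D^2u_k(x_k),x_k+r_kz)]$ with $\theta_k=r_k^{\alpha}[D^2u_k]_{\alpha}$. Then (A3) gives $|\overline{F}_k(\mathrm{X},z)-\overline{F}_k(\mathrm{X},0)|\leq \mathrm{c}|z|^{\alpha}\big(2\|D^2u_k\|_{L^\infty}/[D^2u_k]_{\alpha}+r_k^{\alpha}\|\mathrm{X}\|\big)\to0$, which is the paper's estimate. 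Meanwhile (A2) gives $\overline{F}_k(\mathrm{X},0)\to\mathrm{tr}(\mathbf{A}_\infty\mathrm{X})$ locally uniformly. Together with the uniform Lipschitz bound of $\overline{F}_k$ in $\mathrm{X}$, this identifies the limit equation. With that repair your proof goes through, with the constant depending also on $\mathrm{C}_0$, which the paper's contradiction argument uses implicitly as well.
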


Moreover, we also address the following global counterpart of the previous result.

\begin{theorem}[{\bf Global Schauder estimates}]\label{T2}
Suppose the assumptions $(\mathrm{A1})-(\mathrm{A3})$ are in force with $\tau(t) \leq \mathrm{c}t^{\alpha}$ (for some \(\alpha\in (0,1)\)). Let \(u\in C^{2,\alpha}(\mathrm{B}_{1}^{+} \cup \mathrm{T}_1)\) be a solution to
\[
\left\{
\begin{array}{rcl}
F(D^2u,x) + \langle \mathfrak{B}(x), Du\rangle &=& f(x) \quad \mbox{in} \,\,\,\,\,   \mathrm{B}^{+}_{1}\\
u(x)&=& g(x)  \quad \mbox{on} \,\,\,\,  \mathrm{T}_{1}
\end{array}
\right.
\]
where %\(F\in \mathscr{F}\), \(f\in C^{0,\alpha}(\mathrm{B}_{1})\), \(g\in C^{2,\alpha}(\mathrm{T}_{1})\) and 
\(\|u\|_{C^{2,\alpha}(\overline{\mathrm{B}_{1}^{+}})}\leq \mathrm{C}^{\ast}_{0}\) for some \(\mathrm{C}^{\ast}_{0}>0\) and $g$ satisfies $|g(z) - g(w)| \leq \tau(|z - w|), \ \forall\, z, w \in \mathrm{T}_{1}$. Then, there holds
\begin{equation}\label{estboundary}
\|u\|_{C^{2,\alpha}(\mathrm{B}_{1/2}^{+})}\leq \mathrm{C}\left(\|u\|_{L^{\infty}(\mathrm{B}_{1}^{+})}+\|f\|_{C^{0,\alpha}(\mathrm{B}_{1}^{+})}+\|g\|_{C^{2,\alpha}(\mathrm{T}_{1})}+\|\mathfrak{B}\|_{C^{0,\alpha}(\mathrm{B}_{1};\mathbb{R}^{n})}\right),  
\end{equation}
where \(\mathrm{C}=\mathrm{C}(n,\lambda,\Lambda,\alpha, \omega)>0\) is a universal constant.
\end{theorem}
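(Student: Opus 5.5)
We prove \eqref{estboundary} with the blow-up (Simon-type) compactness argument that underlies Theorem \ref{T1}, now adapted to half-balls. First we reduce to homogeneous boundary data. Extend $g$ to $\mathrm{B}_1^+$ by $\tilde g(x',x_n)=g(x')$ and set $v=u-\tilde g$. Then $v=0$ on $\mathrm{T}_1$, and $v$ solves an equation of the same type:
\[
\tilde F(D^2v,x)+\langle\mathfrak{B},Dv\rangle=\tilde f,\qquad \tilde F(\mathrm{X},x)=F(\mathrm{X}+D^2\tilde g(x),x)-F(D^2\tilde g(x),x),
\]
\[
\tilde f=f-F(D^2\tilde g,x)-\langle\mathfrak{B},D\tilde g\rangle .
\]
The operator $\tilde F$ still satisfies (A1)--(A2) with the same constants. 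Its $x$-oscillation and $\|\tilde f\|_{C^{0,\alpha}}$ are controlled by $\|g\|_{C^{2,\alpha}}$ and $\|\mathfrak{B}\|_{C^{0,\alpha}}$, up to the universal factor coming from $\mathrm{C}_0^\ast$. So it suffices to prove the estimate when $g\equiv0$. We then proceed in four steps.

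\emph{Step 1 (seminorm bound).} By interpolation, $\|u\|_{C^{2}}\le \varepsilon[D^2u]_{C^{0,\alpha}}+C_\varepsilon\|u\|_{L^\infty}$. It therefore suffices to prove the weighted bound
\[
\sup_{x\in \mathrm{B}_1^+}d_x^{2+\alpha}[D^2u]_{\alpha,\mathrm{B}_{d_x/2}(x)\cap \mathrm{B}_1^+}\le \delta\,(\text{same seminorm})+C(\|u\|_\infty+\|f\|_{C^{0,\alpha}}+\|\mathfrak{B}\|_{C^{0,\alpha}}),
\]
where $d_x$ is the distance to the curved part of the boundary, and then absorb the $\delta$ term. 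Equivalently, we argue by contradiction. Suppose there are $F_k\in\mathscr F$, $\mathfrak{B}_k$, $f_k$ and solutions $u_k$ (with $u_k=0$ on $\mathrm{T}_1$) such that the weighted $[D^2u_k]_\alpha$ is at least $k$ times the right-hand side. Pick points $x_k,y_k$ with $r_k=|x_k-y_k|\to0$ that almost realize the seminorm.

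\emph{Step 2 (rescaling).} Define
\[
w_k(z)=\frac{u_k(x_k+r_kz)-P_k(z)}{r_k^{2+\alpha}[D^2u_k]_\alpha},
\]
where $P_k$ is the second-order Taylor polynomial of $u_k$ at $x_k$ (for interior points) or at the boundary projection $\bar x_k$ (when $\mathrm{dist}(x_k,\mathrm{T}_1)/r_k$ stays bounded). The $w_k$ satisfy $[D^2w_k]_\alpha\le1$, have $|D^2w_k(0)-D^2w_k(e_k)|\ge 1/2$ with $|e_k|=1$, and have uniform $C^{2,\alpha}_{\rm loc}$ bounds. They solve
\[
G_k(D^2w_k,z)+\text{lower order}=\text{small},
\]
with
\[
G_k(\mathrm{X},z)=\frac{F_k(D^2u_k(x_k)+\theta_k \mathrm{X},x_k+r_kz)-F_k(D^2u_k(x_k),x_k+r_kz)}{\theta_k},\qquad \theta_k=r_k^{\alpha}[D^2u_k]_\alpha .
\]
By (A3) and $\tau(t)\le ct^\alpha$, the $x$-dependence, the drift term (which carries factors $r_k$ and the bound $\mathrm{C}_0^\ast$) and the right-hand side all tend to $0$ locally uniformly. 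Since $\theta_k\to0$, hypothesis (A2) gives
\[
G_k(\mathrm{X},z)\to \mathrm{tr}(\mathbf{A}_\infty\mathrm{X}),\qquad \mathbf{A}_\infty=\lim_k D_MF_k(D^2u_k(x_k),x_k)\in\mathfrak A_{\lambda,\Lambda},
\]
locally uniformly in $\mathrm{X}$. Here we use the $C^2$ bound $\mathrm{C}_0^\ast$ to extract a subsequence. This is the geometric tangential step.

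\emph{Step 3 (limit and Liouville).} By Arzelà--Ascoli and stability of (viscosity) solutions, $w_k\to w$ in $C^2_{\rm loc}$. The limit solves $\mathrm{tr}(\mathbf{A}_\infty D^2w)=0$ either in all of $\mathbb R^n$ (the interior case) or in a half-space $\{z_n>-a\}$ with $w=0$ on its boundary. The boundary condition survives because $u_k=0$ on $\mathrm{T}_1$ and $P_k$ is the Taylor polynomial at a boundary point, so $P_k$ also vanishes there. Moreover $[D^2w]_\alpha\le1$, so $|D^2w(z)-D^2w(0)|\le C|z|^\alpha$. A linear change of variables turns the operator into the Laplacian. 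Then each second derivative $\partial_{ij}w$ with $i,j<n$ is harmonic, vanishes on the flat boundary, and grows sublinearly. The odd reflection of $\partial_{ij}w$ together with Liouville shows it is constant, hence $0$. Using the equation and harmonicity of the remaining second derivatives, all of $D^2w$ is constant. This contradicts $|D^2w(0)-D^2w(e)|\ge1/2$, where $e$ is a limit of $e_k$.

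\emph{Step 4 (main obstacle).} The hardest part is the case analysis of Step 2. If the rescaled distance $\mathrm{dist}(x_k,\mathrm{T}_1)/r_k\to\infty$, we must center at $x_k$ and reduce to the entire-space Liouville theorem from Theorem \ref{T1}. If the rescaled distance stays bounded, we must recentre at $\bar x_k$ so that both the boundary condition and the normalization survive. The delicate point in the bounded case is keeping the oscillation of $D^2w_k$ between $0$ and $e_k$ bounded below after recentring. We plan to handle it by subtracting the Taylor polynomial at $\bar x_k$ and noting that $|x_k-\bar x_k|\le Cr_k$. Once the contradiction is reached, the absorption of Step 1 and a standard covering of $\mathrm{B}_{1/2}^+$ yield \eqref{estboundary}.
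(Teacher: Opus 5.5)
Your proposal follows essentially the paper's route---contradiction blow-up, dichotomy on $\mathrm{dist}(x_k,\mathrm{T}_1)/r_k$, recentring at a boundary point so the limit vanishes on a hyperplane, reflection plus Liouville, then absorption and covering. Your variations are all sound: subtracting $\tilde g$ first instead of carrying $\overline{g}_j\to 0$ through the blow-up; reflecting the tangential $\partial_{ij}w$ rather than $v_0$ itself, where constancy of $\partial_{in}w$ comes from $\partial_k\partial_{in}w=\partial_n\partial_{ik}w=0$ for $k<n$ and $\partial_n\partial_{in}w=\partial_i\partial_{nn}w=0$; and contradicting the unit-distance oscillation of $D^2w$ instead of using $D^2v_0(0)=0$. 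The one bookkeeping point to fix is that $\tilde F\notin\mathscr{F}$ in general, since its $x$-oscillation is only bounded by $\mathrm{c}|x-y|^{\alpha}\bigl(\|\mathrm{X}\|+\mathrm{C}\|g\|_{C^{2,\alpha}}\bigr)$. So keep $\|g_k\|_{C^{2,\alpha}}$ on the right-hand side of your contradiction hypothesis, together with $\|D^2u_k\|_{L^\infty}$ (or a weighted interpolation) to force $r_k\to0$, exactly as in the paper's Lemma \ref{Interpolationboundary}; these terms are then killed by the factor $\theta_k^{-1}r_k^{\alpha}=[D^2u_k]_{\alpha}^{-1}$.
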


To establish our Schauder estimates, we employ a robust blow-up technique that traces back to Simon's seminal work~\cite{Sim97} (cf. \cite{FR-O} for a modern survey developing this technique), combined with geometric tangential analysis and compactness arguments (inspired by methods from the modern theory of elliptic partial differential equations, cf.~\cite{Teixeira2016} and \cite{Teixeira2020} for insightful surveys on this research area), and classification of global profiles (Liouville type-results). As discussed previously, Schauder estimates, both local and global versions have been extensively studied over the past decades using a variety of analytical tools (see the State-of-the-Art in Section~\ref{State-of-the-Art} for more details). Nevertheless, to the best of our knowledge, such estimates have not previously been established in the fully nonlinear setting (with a drift term and inhomogeneous Dirichlet boundary conditions) using blow-up arguments in conjunction with geometric tangential methods. This gap in the literature was one of the principal motivations behind the development of the estimates presented in this work.

In a summarized way, the main contributions of the paper are as follows:

\begin{itemize}
\item[\textbf{(i)}] \textbf{Schauder estimates beyond the convex/concave setting:}
We establish local and global $C^{2,\alpha}$ Schauder estimates for classical solutions of uniformly elliptic fully nonlinear equations without imposing convexity or concavity assumptions on the operator. Thus, our framework applies to nonlinear operators beyond the standard convexity/concavity setting (cf. \cite{BhatWarr2021,CabreCaff2003,Goffi2024}).

    \item[\textbf{(ii)}] \textbf{Spatial dependence and lower-order terms:}
    Our results apply to equations of the form
    \[
    F(D^2u,x)+\langle \mathfrak{B}(x),Du\rangle=f(x),
    \]
    allowing for both spatial dependence of the fully nonlinear operator and a nontrivial drift term. The operator, drift term, and right-hand side are assumed to satisfy suitable H\"{o}lder continuity conditions (cf. \cite{daSNor21,dosPrazTei2016}).

    \item[\textbf{(iii)}] \textbf{Interior and global estimates:}
    We establish both interior Schauder estimates and global estimates for the corresponding Dirichlet problem in a flattened boundary setting. The latter accommodates inhomogeneous Dirichlet data and yields boundary regularity within the same fully nonlinear framework (cf. \cite{daSNor21,dosPrazTei2016,Teixeira2016}).

    \item[\textbf{(iv)}] \textbf{Geometric tangential blow-up method:}
    The proofs rely on a geometric tangential blow-up argument combined with compactness, stability, and Liouville-type classification of global profiles. Under the natural scaling, the nonlinear operator converges to a linear uniformly elliptic tangential operator, while the rescaled lower-order terms and inhomogeneity vanish. This reduction enables us to exploit the rigidity of the limiting profiles and recover the desired $C^{2,\alpha}$ estimates for the original nonlinear equation (cf. \cite{FR-O,Sim97,Teixeira2016}).

    \item[\textbf{(v)}] \textbf{A unified perturbative framework:}
    Our argument provides a flexible mechanism for transferring regularity from the limiting linear tangential equation to the original nonlinear problem. In particular, it does not rely on convexity/concavity-based methods and is not restricted to situations in which the nonlinear equation can be treated directly through a conventional linearization procedure (cf. \cite{BhatWarr2021,BdaSO26,dosPrazTei2016}).
\end{itemize}

To the best of our knowledge, this combination of structural assumptions, estimates, and proof techniques has not previously been developed in this generality. In particular, we are not aware of previous works establishing both interior and global Schauder estimates for fully nonlinear uniformly elliptic equations with spatially dependent nonlinearities and drift terms, without convexity or concavity assumptions, through a geometric tangential blow-up approach.

\subsection*{Explaining the heuristic behind the proof}

We conclude this introduction by outlining the heuristic framework of the geometric tangential analysis underlying our proofs. By way of explanation, consider a fully nonlinear elliptic operator \( F: \text{Sym}(n) \to \mathbb{R} \) such that $F(\mathbf{O}_n) = 0$ (this is not a restrictive assumption). Then, the family of elliptic scaling functions defined by
\[
\mathcal{G}_{\varrho}(\mathrm{X}) := \frac{1}{\varrho} F(\varrho \mathrm{X}), \quad \text{for } \varrho > 0.
\]
constitutes a continuous family of operators that preserves the ellipticity constants of the original equation (i.e., $0< \lambda\leq \Lambda$). Indeed, $\mathscr{F}$ invariant under the transformation $\varrho \rightarrow \frac{1}{\varrho}F(\varrho x)$. 

Now, if \( F: \text{Sym}(n) \to \mathbb{R}\) is differentiable (for instance, at the origin, recalling the normalization \( F(\mathbf{O}_n) = 0 \)), then we have
\[
\displaystyle \lim_{\varrho \to 0} \mathcal{G}_{\varrho}(\mathrm{X}) = \lim_{\varrho \to 0} \frac{F(\varrho \mathrm{X}+\mathbf{O}_n)-F(\mathbf{O}_n)}{\varrho} = \mathrm{D}_{\mathrm{X}}F(\mathbf{O}_n) = \sum_{i,j=1}^{n}\frac{ \partial F}{\partial \mathrm{X}_{ij}}(\mathbf{O}_n) \mathrm{X}_{ij}.
\]
In other words, the linear, uniformly elliptic operator \(\displaystyle \mathrm{X} \mapsto \sum_{i,j=1}^{n}\frac{ \partial F}{\partial \mathrm{X}_{ij}}(\mathbf{O}_n) \mathrm{X}_{ij}  = \mathrm{tr}(\mathfrak{A}_0\mathrm{X})\) represents the tangential equation associated with \(\mathcal{G}_{\varrho} \) in the limiting configuration as \( \varrho \to 0 \). 

Now, if \( u: \mathrm{B}_1 \to \mathbb{R} \) is a solution to an equation involving the original operator \( F: \text{Sym}(n) \to \mathbb{R}\), i.e.,
$$
F(D^2 u) + \langle \mathfrak{B}(x), Du\rangle = f(x) \quad \text{in} \quad \mathrm{B}_1,
$$
then the scaled function \( u^{\mathfrak{a}}_{\varrho}(x) := \frac{1}{\varrho^{2+\alpha}\mathfrak{a}} u(\varrho x) \) (a blow-up) satisfies a corresponding equation for 
$$
\mathcal{G}_{\varrho^{\alpha} \mathfrak{a}}(D^2 u^{\mathfrak{a}}_{\varrho}(x)) + \langle \mathfrak{B}_{\varrho}^{\mathfrak{a}}(x), D u^{\mathfrak{a}}_{\varrho}(x)\rangle = \frac{1}{\varrho^{\alpha} \mathfrak{a}} f(\varrho x) := f_{\varrho}^{\mathfrak{a}}(x) \quad \text{in} \quad \mathrm{B}_{1/\varrho}, 
$$
where $\mathfrak{B}_{\varrho}^{\mathfrak{a}}(x) = \frac{\varrho}{a} \mathfrak{B}(\varrho x)$, and  $\mathfrak{a}>0$ is chosen in such a way $\|f_{\varrho}^{\mathfrak{a}}\|_{\infty, \mathrm{B}_1} = \text{o}(1)$ and $\|\mathfrak{B}_{\varrho}^{\mathfrak{a}}\|_{\infty, B_1} = \text{o}(1)$ as $\varrho \to 0$.

Moreover, if it is further proved that the norm \( [D^{2}  u^{\mathfrak{a}}_{\varrho} ]_{\alpha,\mathrm{B}_{1/(2\varrho)}} \) is bounded, e.g., by \( 1 \), then \( u^{\mathfrak{a}}_{\varrho} \) becomes a normalized solution (in the $(2+\alpha)-$fashion) to the ``\( \varrho \)-scaled equation''. This strategy allows us to access some rigidity results (e.g., classification of global profiles) available for the linear tangential equation, via compactness and stability arguments.

Indeed, we can conclude the following:
$$
\begin{array}{ccc}
 \left\{ 
 \begin{array}{ccl}
 \mathcal{G}_{\varrho^{\alpha} \mathfrak{a}}(\mathrm{X}) \to  \mathrm{tr}(\mathfrak{A}_0 \mathrm{X}) & \text{as}& \varrho \to 0    \\
 \mathfrak{B}_{\varrho}^{\mathfrak{a}} \to 0 & \text{as} & \varrho \to 0\\
 f_{\varrho}^{\mathfrak{a}} \to 0 & \text{as}& \varrho \to 0\\
 u^{\mathfrak{a}}_{\varrho} \to \mathbf{U} & \text{as}& \varrho \to 0
 \end{array}
 \right.& \Rightarrow & \mathrm{tr}(\mathfrak{A}_0 D^2 \mathbf{U}) = 0 \quad \text{in} \quad \mathbb{R}^n \quad \text{and} \quad [D^{2}  \mathbf{U} ]_{\alpha,\mathbb{R}^n} \leq 1.
\end{array}
$$

Finally, since $0 < \lambda \mathrm{Id}_n \leq \mathfrak{A}_0 \leq  \Lambda \mathrm{Id}_n$, after changing variables (namely, $z = \mathfrak{A}_0^{1/2}x$), the case of constant coefficients (uniformly elliptic), one reduces to analyzing the scenario of harmonic functions, which has a rich available potential theory (for instance, concerning Liouville-type results)
$$
 \mathrm{tr}(\mathfrak{A}_0 D^2 \mathbf{U}) = 0 \quad \text{in} \quad \mathbb{R}^n \quad \Leftrightarrow \quad \sum_{i=1}^{n} \partial_{z_iz_i} \mathbf{U} = 0 \quad \text{in} \quad \mathbb{R}^n \quad (\text{with}\,\,\,z = \mathfrak{A}_0^{1/2}x)
$$
\medskip

For the inhomogeneous Dirichlet boundary case, a similar reasoning can be employed (provided we assume $\|g_{\varrho}^{\mathfrak{a}} = u^{\mathfrak{a}}_{\varrho}|_{\partial \mathrm{T}_{1}}\|_{\infty, \mathrm{B}_1} = \text{o}(1)$), yielding a profile defined in the half-space, which necessitates the application of a Liouville-type result in this context.

Therefore, this approach will enable us to apply \textit{Simon's blow-up technique}, developed in \cite{Sim97} (see also \cite{FR-O}), to obtain Schauder estimates in both local and global settings.

\section{A brief literature review on Schauder  estimates}\label{State-of-the-Art}

\subsection*{Higher estimates for $2$nd-order fully nonlinear elliptic PDEs: Local scenario}

In the mid-20th century, Nirenberg~\cite{Nirenberg1953} pioneered H\"older estimates for the Hessian of classical solutions to certain nonlinear partial differential equations in two dimensions.

\begin{theorem}[{\cite[Theorem 4.9]{FR-O}}]
Let \( F : \mathbb{R}^{2 \times 2} \to \mathbb{R} \) be uniformly elliptic with ellipticity constants \( \lambda \) and \( \Lambda \). Let \( u \in C^2(\mathrm{B}_1) \) be a solution of
\[
F(D^2u) = 0 \quad \text{in } \quad \mathrm{B}_1 \subset \mathbb{R}^2.
\]
Then,
\[
\|u\|_{C^{2,\beta}(\mathrm{B}_{1/2})} \leq \mathrm{C} \|u\|_{L^{\infty}(\mathrm{B}_1)},
\]
for some constants \( \beta > 0 \) and \( \mathrm{C} > 0 \), depending only on \( \lambda \) and \( \Lambda \).
\end{theorem}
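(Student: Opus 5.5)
The natural route to Nirenberg's estimate in two dimensions is through the equations satisfied by the first derivatives, and then the special two-dimensional fact that divergence-form equations with merely bounded measurable coefficients have H\"older continuous gradients (equivalently, that quasiconformal-type systems are H\"older).

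\textbf{Interior $C^{1,1}$ bound.} First I will reduce to a solution with bounded Hessian in $\mathrm{B}_{3/4}$. Fix a unit direction $e$ and consider the incremental quotients $\delta_h u(x)=(u(x+he)-u(x))/h$. Since $F$ is uniformly elliptic, $\delta_h u$ solves a linear equation $\mathrm{tr}(\mathbf{A}_h(x)D^2\delta_h u)=0$ with $\mathbf{A}_h(x)=\int_0^1 DF(tD^2u(x+he)+(1-t)D^2u(x))\,dt$, a measurable matrix in $\mathfrak{A}_{\lambda,\Lambda}$. (If $F$ is only Lipschitz, I will write the difference $F(D^2u(x+he))-F(D^2u(x))$ as $\mathrm{tr}(\mathbf{A}_h D^2\delta_h u)$ for some measurable $\mathbf{A}_h\in\mathfrak{A}_{\lambda,\Lambda}$, using the Pucci bounds.) Krylov--Safonov then yields $C^{0,\gamma}$ bounds on $\delta_h u$, hence $u\in C^{1,\gamma}$, but not directly $C^{1,1}$. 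To get more I will use the specifically two-dimensional structure. Let $v=\partial_e u$ and formally write $a_{11}v_{11}+2a_{12}v_{12}+a_{22}v_{22}=0$ with $a_{ij}=\partial F/\partial M_{ij}(D^2u)$ measurable and elliptic. Dividing by $a_{22}$ gives $v_{22}=-(a_{11}v_{11}+2a_{12}v_{12})/a_{22}$. This lets me rewrite the equation in divergence form for $w=\partial_1 v$ or $\partial_2 v$:
$$\partial_1\Bigl(\tfrac{a_{11}}{a_{22}}\partial_1 w_\ast\Bigr)+\partial_2(\dots)=0,$$
where $w_\ast=u_{11}$. This is the classical trick: differentiate $F(D^2u)=0$ in $x_1$ to get $a_{ij}\partial_{ij}u_1=0$, divide by $a_{22}$, and observe that $\partial_{11}(u_1)\frac{a_{11}}{a_{22}}+2\frac{a_{12}}{a_{22}}\partial_{12}u_1+\partial_{22}u_1=0$ can be read as $\partial_1(\frac{a_{11}}{a_{22}}\partial_1 u_{11}+2\frac{a_{12}}{a_{22}}\partial_2 u_{11})+\partial_2(\partial_2 u_{11})=0$ after commuting derivatives (since $\partial_{22}u_1=\partial_2\partial_2 u_{1}$, one differentiates once more and regroups). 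Hence $w=u_{11}$ solves a divergence-form equation $\mathrm{div}(\mathbf{B}\nabla w)=0$ with bounded measurable, uniformly elliptic $\mathbf{B}$ depending only on $\lambda,\Lambda$. This step requires $u\in C^3$ formally. I will justify it by approximating $F$ by smooth elliptic operators (mollification in $\mathrm{Sym}(2)$) and the solution by smooth ones, and only use estimates depending on $\lambda,\Lambda$.

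\textbf{H\"older continuity of $D^2u$.} De Giorgi--Nash gives $u_{11}\in C^{0,\beta}(\mathrm{B}_{1/2})$ with $\|u_{11}\|_{C^{0,\beta}}\le \mathrm{C}\|u_{11}\|_{L^2(\mathrm{B}_{3/4})}$; in dimension two one can even use Morrey's argument for quasiconformal maps. By symmetry the same holds for $u_{22}$. For $u_{12}$ I will use the equation itself: $F(D^2u)=0$ and ellipticity make $u_{12}$ locally determined by $u_{11},u_{22}$ up to a Lipschitz, uniformly monotone relation (the implicit function in the $M_{12}$ entry, with derivative controlled by $\lambda,\Lambda$ since $\partial F/\partial M_{12}$ relates to the off-diagonal). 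If that is awkward, I will instead rotate coordinates by $45^\circ$: $u_{ee}$ for $e=(1,1)/\sqrt2$ also satisfies the divergence equation, and $u_{12}=u_{ee}-\tfrac12(u_{11}+u_{22})$.

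\textbf{Closing the estimate.} Caccioppoli for the divergence equation gives $\|D^2u\|_{L^2(\mathrm{B}_{3/4})}\le \mathrm{C}\|Du\|_{L^2(\mathrm{B}_{7/8})}$ applied to the derivative equation for $u_k$. Interior $C^{1,\gamma}$ (Krylov--Safonov / Caffarelli) bounds $Du$ by $\mathrm{C}\|u\|_{L^\infty(\mathrm{B}_1)}$. Chaining these yields the stated estimate with $\beta,\mathrm{C}$ depending only on $\lambda,\Lambda$.

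The main obstacle is the regularity justification in the first step: the divergence-form rewriting needs third derivatives. I will handle it by approximation and uniqueness/stability of viscosity solutions, keeping all constants independent of the smoothing.
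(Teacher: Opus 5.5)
The paper gives no proof of this statement; it is quoted from \cite[Theorem 4.9]{FR-O}. Your strategy is the standard two-dimensional argument behind that reference. If $v$ solves $a_{11}v_{11}+2a_{12}v_{12}+a_{22}v_{22}=0$ with measurable $(a_{ij})\in\mathfrak{A}_{\lambda,\Lambda}$, and $\tilde a=a/a_{22}$, then $w=\partial_1 v$ solves $\partial_1(\tilde a_{11}\partial_1w+2\tilde a_{12}\partial_2w)+\partial_2(\partial_2w)=0$. De Giorgi--Nash then applies: the coefficient matrix $\mathbf{B}$ is not symmetric, but $\xi\cdot\mathbf{B}\xi\ge(\lambda/\Lambda)|\xi|^2$ is all that is needed.

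Your identity for $u_{11}$ is correct, but the mechanism is not ``differentiating once more'', which would put derivatives on the measurable coefficients. It is $\tilde a_{11}\partial_1w+2\tilde a_{12}\partial_2w=-\partial_{22}v$ combined with $\partial_1\partial_{22}v=\partial_2\partial_2w$.

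\textbf{The gap} is in the step you yourself flag. Approximate $F$ by smooth $F_\varepsilon$ and $u$ by solutions $u_\varepsilon$ of $F_\varepsilon(D^2u_\varepsilon)=0$. Uniqueness and stability give $u_\varepsilon\to u$, but nothing gives $u_\varepsilon\in C^3$. For non-convex $F_\varepsilon$ that interior smoothness is exactly the qualitative content of the theorem you are proving, so as written the argument is circular. Making it non-circular would need a continuity-method existence theory with global $C^{2,\alpha}$ estimates up to the boundary, which you do not supply. Mollifying $u$ instead does not help either: $F(D^2(u*\eta_\varepsilon))\neq0$, and the derivatives of that error are not controlled.

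\textbf{The fix.} The approximation is unnecessary, because the divergence identity only needs $v\in C^2$. For $\varphi\in C^\infty_c$ and $w=\partial_1v\in C^1\subset H^1_{\mathrm{loc}}$, one integration by parts in each term gives
$\int[(\tilde a_{11}\partial_1w+2\tilde a_{12}\partial_2w)\partial_1\varphi+\partial_2w\,\partial_2\varphi]=\int[-\partial_{22}v\,\partial_1\varphi+\partial_{12}v\,\partial_2\varphi]=0$.
So apply the identity to $v=\delta_hu\in C^2$, using the measurable $\mathbf{A}_h$ from your first step. Then $\partial_1\delta_hu$, and (dividing by $a_{11}$ instead) $\partial_2\delta_hu$, get $C^{0,\beta}(\mathrm{B}_{1/2})$ bounds in terms of $\|D^2u\|_{L^2(\mathrm{B}_{7/8})}$, uniformly in $h$; then let $h\to0$. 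Taking $e=e_1,e_2$ covers all of $D^2u$ at once; for instance, $\partial_1\delta_hu$ with $e=e_2$ tends to $u_{12}$. The same identity applied to $u$ itself is what makes each $u_k$ a weak solution of a divergence-form equation, which your Caccioppoli step tacitly uses.

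\textbf{A second error.} Your first option for $u_{12}$ is false. Ellipticity gives monotonicity of $F$ in directions $\mathrm{N}\ge0$, not in the entry $\mathrm{M}_{12}$; for example, $F=\mathrm{tr}$ does not depend on $\mathrm{M}_{12}$ at all. So the equation determines $u_{22}$ from $(u_{11},u_{12})$, not $u_{12}$ from $(u_{11},u_{22})$. Your $45^\circ$ rotation fallback is fine, and the difference-quotient version above already handles $u_{12}$ directly.
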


In higher dimensions, a seminal result was independently obtained by Evans~\cite{Evans1982} and Krylov~\cite{Krylov1982} in the early 1980s, establishing the following:

\begin{theorem}[{\bf Evans--Krylov's Theorem}]\label{thm:evans-krylov}
Let \( F: \text{Sym}(n) \to \mathbb{R} \) be a convex (or concave) uniformly elliptic operator satisfying \( F(0) = 0 \). Let \( u \in C(\mathrm{B}_1) \) be a viscosity solution of
\[
F(D^2 u) = 0 \quad \text{in } \quad  \mathrm{B}_1.
\]
Then,
\[
\|u\|_{C^{2,\beta}(\mathrm{B}_{1/2})} \leq \mathrm{C} \|u\|_{L^{\infty}(\mathrm{B}_1)},
\]
for some \( \beta > 0 \) and constant \( \mathrm{C} > 0 \), depending only on the dimension \( n \), and the ellipticity constants \( \lambda \) and \( \Lambda \). Particular, if \( F \) is smooth, then \( u \in C^\infty(\mathrm{B}_1) \).
\end{theorem}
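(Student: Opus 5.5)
The plan is the classical two-stage argument. First, prove the estimate a priori for smooth solutions of equations with smooth concave $F$. Then remove the smoothness by approximation and stability of viscosity solutions.

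By replacing $F(\mathrm{X})$ with $-F(-\mathrm{X})$ if necessary, I assume $F$ is concave. I mollify $F$ in the matrix variable to obtain $F^{\varepsilon}$: smooth, concave, with the same constants $\lambda,\Lambda$ and $F^{\varepsilon}(0)=0$. I also take $u$ smooth, so that all constants are tracked only through $n,\lambda,\Lambda$.

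\emph{Step 1 ($C^{1,\gamma}$ and $C^{1,1}$ bounds).}
\begin{itemize}
\item For a unit vector $e$, differentiating $F(D^2u)=0$ gives $a_{ij}(x)\partial_{ij}u_e=0$, where $a_{ij}(x)=F_{ij}(D^2u(x))$ lies in $\mathfrak{A}_{\lambda,\Lambda}$. Krylov--Safonov then bounds $\|u\|_{C^{1,\gamma}(\mathrm{B}_{3/4})}$ by $C\|u\|_{L^\infty(\mathrm{B}_1)}$.
\item Differentiating twice and using concavity gives $a_{ij}\partial_{ij}u_{ee}=-F_{ij,kl}\,u_{ije}u_{kle}\ge 0$, so $u_{ee}$ is a subsolution. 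The local maximum principle gives $\sup_{\mathrm{B}_{1/2}}u_{ee}\le C\|u_{ee}^+\|_{L^1(\mathrm{B}_{3/4})}$.
\item The right-hand side is controlled by $\|Du\|_{L^\infty}$ after integrating by parts against a cutoff, since $\int u_{ee}$ itself is a boundary term.
\item This one-sided bound on all eigenvalues, combined with $F(D^2u)=0$ and ellipticity (so $\mathcal{M}^-(D^2u)\le 0\le\mathcal{M}^+(D^2u)$), bounds the negative eigenvalues as well. Hence $\|D^2u\|_{L^\infty(\mathrm{B}_{1/2})}\le C\|u\|_{L^\infty(\mathrm{B}_1)}$.
\end{itemize}

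\emph{Step 2 (oscillation decay of $D^2u$; the main obstacle).} Concavity gives, for all $x,y$,
\[
0=F(D^2u(y))\le F(D^2u(x))+a_{ij}(x)\bigl(u_{ij}(y)-u_{ij}(x)\bigr),
\]
that is, $a_{ij}(x)\bigl(u_{ij}(x)-u_{ij}(y)\bigr)\le 0$.

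I will use the algebraic lemma that there is a finite family of unit vectors $e_1,\dots,e_N$ (depending on $n,\lambda,\Lambda$) such that every $\mathbf{A}\in\mathfrak{A}_{\lambda,\Lambda}$ can be written as $\sum_k\beta_k\, e_k\otimes e_k$ with $\beta_k\in[\lambda_*,\Lambda_*]$. Set $w_k=u_{e_ke_k}$. Each $w_k$ is a subsolution, so on $\mathrm{B}_{2r}$ the function $M_k(2r)-w_k$ is a nonnegative supersolution, where $M_k(\rho)=\sup_{\mathrm{B}_\rho}w_k$.

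The argument then runs as follows:
\begin{itemize}
\item Apply the weak Harnack inequality to $M_k(2r)-w_k$ for every $k$.
\item Combine these bounds with the inequality $\sum_k\beta_k(x)\bigl(w_k(y)-w_k(x)\bigr)\ge 0$ obtained above.
\item Use this to show that, for at least one $k$, $w_k$ stays away from its supremum on a set of positive proportion of $\mathrm{B}_r$. This forces a fixed fraction of decrease for the remaining directions as well.
\end{itemize}
The outcome is $\sum_k \mathrm{osc}_{\mathrm{B}_r}w_k\le(1-\theta)\sum_k\mathrm{osc}_{\mathrm{B}_{2r}}w_k$ with $\theta=\theta(n,\lambda,\Lambda)\in(0,1)$. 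Iterating gives $[D^2u]_{C^{0,\beta}(\mathrm{B}_{1/4})}\le C\|D^2u\|_{L^\infty(\mathrm{B}_{1/2})}$. A covering argument plus Step 1 then yields the stated estimate on $\mathrm{B}_{1/2}$.

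The delicate point is the bookkeeping in this measure-theoretic alternative. I would follow the standard Evans--Krylov presentation, arguing with $\mathrm{osc}$ of the vector $(w_1,\dots,w_N)$ and a pigeonhole choice of index.

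\emph{Step 3 (removing smoothness).} Given a viscosity solution $u$, solve the Dirichlet problem $F^{\varepsilon}(D^2u^\varepsilon)=0$ in $\mathrm{B}_{3/4}$ with $u^\varepsilon=u_\varepsilon$ on $\partial\mathrm{B}_{3/4}$, where $u_\varepsilon$ is a smooth approximation of the boundary data. Classical solvability follows from the method of continuity, using the a priori interior estimates together with the corresponding boundary estimates for concave operators; I would cite these rather than reprove them.

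The estimates from Steps 1--2 are uniform in $\varepsilon$. The maximum principle gives $\|u^\varepsilon\|_{L^\infty}\le\|u\|_{L^\infty}$, and barriers give equicontinuity up to the boundary. Arzel\`a--Ascoli then extracts a limit, and stability of viscosity solutions shows the limit solves $F(D^2v)=0$ with $v=u$ on $\partial\mathrm{B}_{3/4}$. Comparison for $x$-independent uniformly elliptic $F$ gives $v=u$, so $u$ inherits the $C^{2,\beta}$ bound.

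Finally, if $F$ is smooth, the derivatives $u_e$ solve linear equations with $C^{\beta}$ coefficients. Theorem \ref{thm:schauder} applied iteratively (bootstrap) gives $u\in C^\infty(\mathrm{B}_1)$.
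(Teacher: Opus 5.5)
The paper does not prove this statement. It is quoted in the literature review and attributed to Evans and Krylov, so the comparison has to be with the classical argument (Gilbarg--Trudinger, Section 17.4; Caffarelli--Cabr\'e, Chapter 6). Your architecture is the standard one: an a priori estimate for smooth $F$ and $u$, then approximation, stability and uniqueness, then a bootstrap. Steps 2--3 are fine in outline, but Step 1 has a genuine gap.

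\textbf{The gap in Step 1.} The local maximum principle $\sup_{\mathrm{B}_{1/2}}u_{ee}\le \mathrm{C}\|u_{ee}^+\|_{L^1(\mathrm{B}_{3/4})}$ is correct. However, integrating by parts against a cutoff $\eta$ only controls the \emph{signed} integral $\int\eta\,u_{ee}=\int u\,\eta_{ee}$. You need $\int\eta\,u_{ee}^+=\int\eta\,u_{ee}+\int\eta\,u_{ee}^-$, and nothing in your argument bounds $\int\eta\,u_{ee}^-$. Bounding it is no easier than bounding $\|D^2u\|_{L^1}$, which is what you are after.

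No choice of weights rescues this. For $F=\mathrm{tr}$ the admissible Hessians are the traceless matrices, a set invariant under $\mathrm{X}\mapsto-\mathrm{X}$. Hence no expression $\mathrm{tr}(\mathbf{B}(x)D^2u)$ with a weight independent of $u$ (the kind that integrates by parts onto $u$) can dominate $|D^2u|$ pointwise.

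The missing ingredient is an a priori integrability estimate $\|D^2u\|_{L^{\delta}(\mathrm{B}_{3/4})}\le \mathrm{C}\|u\|_{L^\infty(\mathrm{B}_1)}$ for some small universal $\delta>0$. This is the $W^{2,\delta}$ estimate of Lin and Caffarelli for the class $S(\lambda,\Lambda,0)$, to which $u$ belongs since $F(0)=0$. You would combine it with the local maximum principle in its $L^{\delta}$ form.

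Your first bullet has a related slip. Krylov--Safonov applied to $u_e$ bounds $[u_e]_{C^{0,\gamma}}$ by $\|Du\|_{L^\infty}$, not by $\|u\|_{L^\infty}$.

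\textbf{An alternative repair.} You can drop Step 1 entirely, using the device the paper itself uses to pass from Lemma \ref{Interpolationinterior} to Theorem \ref{T1}.
\begin{itemize}
\item Since $u(x_0+\rho x)/\rho^2$ solves the same equation, Step 2 rescales to $\rho^{2+\beta}[D^2u]_{\beta,\mathrm{B}_{\rho/2}(x_0)}\le \mathrm{C}\rho^{2}\|D^2u\|_{L^\infty(\mathrm{B}_\rho(x_0))}$ for every $\mathrm{B}_\rho(x_0)\subset\mathrm{B}_1$.
\item The interpolation inequality \eqref{eq:interpolation2} bounds the right-hand side by $\delta\rho^{2+\beta}[D^2u]_{\beta,\mathrm{B}_\rho(x_0)}+\mathrm{C}_\delta\|u\|_{L^\infty(\mathrm{B}_1)}$.
\item Lemma \ref{lemma-abstrato} with $\mathcal{S}(\mathscr{A})=[D^2u]_{\beta,\mathscr{A}}$ then gives $[D^2u]_{\beta,\mathrm{B}_{1/2}}\le \mathrm{C}\|u\|_{L^\infty(\mathrm{B}_1)}$. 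Apply it on slightly smaller balls so that the seminorms of your approximants $u^{\varepsilon}$ are finite.
\end{itemize}
The same trick would also repair the $C^{1,\gamma}$ bullet, which you no longer need.

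\textbf{A remark on Step 2.} The oscillation decay does not need a pigeonhole alternative. Concavity gives $w_\ell(x)-\inf_{\mathrm{B}_{2r}}w_\ell\le \mathrm{C}\sum_{k\neq\ell}\bigl(M_k(2r)-w_k(x)\bigr)$. Hence $\mathrm{osc}_{\mathrm{B}_{2r}}w_\ell\le \mathrm{C}\sum_k\bigl(M_k(2r)-w_k(x)\bigr)$ for all $x\in\mathrm{B}_r$. Averaging the $p$-th power over $\mathrm{B}_r$ and applying weak Harnack to each $M_k(2r)-w_k$ yields $\mathrm{osc}_{\mathrm{B}_{2r}}w_\ell\le \mathrm{C}\sum_k\bigl(M_k(2r)-M_k(r)\bigr)$ for every $\ell$. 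Summing over $\ell$ gives the decay of $\sum_k\mathrm{osc}\,w_k$ directly.
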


Subsequently, Caffarelli addressed Schauder estimates for the inhomogeneous problem by using perturbation and compactness techniques.

\begin{theorem}[{\bf Caffarelli's Schauder estimates - \cite[Ch. 8]{CafCabre1995}}] Let \( F: \text{Sym}(n) \to \mathbb{R} \) be a convex, uniformly elliptic operator with ellipticity constants \( 0< \lambda \le \Lambda \), satisfying \( F(\mathbf{O}_n) = 0 \). Suppose that \( u \in C^0(B_1) \) is a viscosity solution to
\[
F(D^2 u) = f(x) \quad \text{in } B_1 \subset \mathbb{R}^n.
\]
Then, there exists a constant \( \alpha_0 \in (0,1) \), depending only on \( n, \lambda \) and \( \Lambda \), such that if \( f \in C^{0, \alpha}(B_1) \) for some \( \alpha \in (0,1) \), then \( u \in C^{2,\min\{\alpha_0, \alpha\}}(B_{1/2}) \), and
    \[
    \|u\|_{C^{2,\min\{\alpha_0,\alpha\}}(B_{1/2})} \leq \mathrm{C}\left( \|u\|_{L^{\infty}(B_1)} + \|f\|_{C^{0, \alpha}(B_1)} \right),
    \]
    for some constant \( \mathrm{C} > 0 \) depending only on \( n, \lambda \), \( \Lambda \), and \( \alpha \). 
\end{theorem}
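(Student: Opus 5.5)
The plan is Caffarelli's perturbative scheme: approximate $u$ by solutions of the frozen equation, for which Evans--Krylov's Theorem (Theorem \ref{thm:evans-krylov}) gives $C^{2,\bar\alpha}$ estimates, and iterate a quadratic improvement of flatness at geometric scales. Let $\bar\alpha=\bar\alpha(n,\lambda,\Lambda)$ be the Evans--Krylov exponent. I would set $\alpha_0\in(0,\bar\alpha)$, for instance $\alpha_0=\bar\alpha/2$, and $\beta:=\min\{\alpha_0,\alpha\}$.

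\emph{Normalization.} By translation it suffices to prove a pointwise estimate at an arbitrary $x_0\in \mathrm{B}_{1/2}$, which I take to be $x_0=0$. Dividing $u$ by $K:=\|u\|_{L^\infty(\mathrm{B}_1)}+\delta^{-1}\|f\|_{C^{0,\alpha}(\mathrm{B}_1)}$ replaces $F$ by $K^{-1}F(K\,\cdot)$. This operator is still convex, uniformly elliptic with the same constants, and vanishes at $\mathbf{O}_n$. A further dilation $x\mapsto \rho x$ then reduces us to
\[
\|u\|_{L^\infty(\mathrm{B}_1)}\le 1 \quad\text{and}\quad \|f-f(0)\|_{L^\infty(\mathrm{B}_1)}+[f]_{C^{0,\alpha}(\mathrm{B}_1)}\le\delta,
\]
where $\delta$ is a small universal constant to be fixed. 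The bound on $|f(0)|$ is universal.

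\emph{Approximation lemma.} For every $\varepsilon>0$ there is $\delta>0$ with the following property. If $G$ is convex, uniformly elliptic with constants $\lambda,\Lambda$, $u$ is a viscosity solution of $G(D^2u)=f$ in $\mathrm{B}_1$ with $|u|\le1$ and $\|f-c\|_{L^\infty(\mathrm{B}_1)}\le\delta$, then there is $h$ with $G(D^2h)=c$ in $\mathrm{B}_{3/4}$ and $\|u-h\|_{L^\infty(\mathrm{B}_{1/2})}\le\varepsilon$.

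I would prove this by contradiction and compactness. Krylov--Safonov interior H\"older estimates give equicontinuity of the $u_k$ in $\mathrm{B}_{3/4}$. The operators $G_k$ are uniformly Lipschitz and convex, so Arzel\`a--Ascoli applies to them too (after subtracting constants so that the $c_k$ stay bounded). Stability of viscosity solutions under uniform convergence then shows that the limit solves the frozen equation, which is a contradiction.

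\emph{One step of the iteration.} Applying Theorem \ref{thm:evans-krylov} to $\tilde G(\mathrm{M}):=G(\mathrm{M}+\mathrm{M}_0)-c$, with $\mathrm{M}_0$ chosen so that $\tilde G(\mathbf{O}_n)=0$, and $\|h\|_\infty\le2$, the second order Taylor polynomial $P$ of $h$ at $0$ satisfies $G(D^2P)=c$, $\|P\|\le C$, and $|h-P|\le Cr^{2+\bar\alpha}$ in $\mathrm{B}_r$. Choose $r$ so small that $Cr^{2+\bar\alpha}\le \tfrac12 r^{2+\beta}$; this is possible since $\beta<\bar\alpha$. Then take $\varepsilon=\tfrac12 r^{2+\beta}$, which fixes $\delta$. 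The result is $|u-P|\le r^{2+\beta}$ in $\mathrm{B}_r$.

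\emph{Induction.} Suppose quadratic polynomials $P_k$ have been built with $F(D^2P_k)=f(0)$ and $|u-P_k|\le r^{k(2+\beta)}$ in $\mathrm{B}_{r^k}$. Set
\[
u_k(x)=\frac{(u-P_k)(r^kx)}{r^{k(2+\beta)}}.
\]
It solves $F_k(D^2u_k)=f_k$, where
\[
F_k(\mathrm{M})=r^{-k\beta}\bigl[F(r^{k\beta}\mathrm{M}+D^2P_k)-f(0)\bigr],\qquad f_k(x)=r^{-k\beta}\bigl(f(r^kx)-f(0)\bigr).
\]
The operator $F_k$ is convex, has the same ellipticity, and $F_k(\mathbf{O}_n)=0$. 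Since $\beta\le\alpha$, we get $|f_k|\le r^{k(\alpha-\beta)}\delta\le\delta$. So the one-step lemma applies with $c=0$, producing $P_{k+1}$ after scaling back.

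The coefficient increments satisfy $\|P_{k+1}-P_k\|$ bounds of order $r^{k\beta}$ on the Hessian (and correspondingly on lower coefficients). Hence the $P_k$ converge to a quadratic $P_\infty$ with $|u-P_\infty|\le Cr^{2+\beta}$-type bounds at all scales, that is, $u$ is pointwise $C^{2,\beta}$ at $0$ with universal constant. Since this holds at every point of $\mathrm{B}_{1/2}$, the standard Campanato-type argument yields $\|u\|_{C^{2,\beta}(\mathrm{B}_{1/2})}\le C$, and undoing the normalization gives the stated estimate.

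\emph{Main obstacle.} I expect the hard part to be the uniformity in the approximation lemma across the whole family $\{F_k\}$: the constants must not depend on $D^2P_k$. This is why it matters that Evans--Krylov's constant depends only on $n,\lambda,\Lambda$ for every convex operator vanishing at the origin, and that the class of such operators is closed under the rescalings above and under locally uniform limits, which is what makes the compactness argument work.
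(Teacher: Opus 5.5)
The paper gives no proof of this statement. It is quoted from \cite[Ch.~8]{CafCabre1995} in the literature review, and the paper's own technique is different (see below). Your scheme is Caffarelli's perturbative proof: approximate by solutions of a frozen convex equation, apply Evans--Krylov to the approximant, iterate a quadratic improvement of flatness geometrically, then pass from pointwise to uniform $C^{2,\beta}$. Most of the bookkeeping is right: $F_k$ is convex with the same ellipticity and $F_k(\mathbf{O}_n)=0$, $|f_k|\le \delta r^{k(\alpha-\beta)}\le\delta$, and $\beta\le\alpha_0<\bar\alpha$ lets you choose $r$. Two small fixes are needed:
\begin{itemize}
\item The induction cannot start from $P_0=0$, since $F(D^2P_0)=0\neq f(0)$ in general. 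Obtain $P_1$ directly from the one-step lemma with $G=F$ and $c=f(0)$.
\item $M_0=t\,\mathrm{Id}_n$ with $|t|\le |c|/(n\lambda)$ is universal, because $G(\mathbf{O}_n)=0$ and $|c|\le\delta$ in every application. Say so explicitly.
\end{itemize}

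The step that does not close as written is the compactness proof of the approximation lemma. Negating the lemma gives $G_k,u_k,f_k,c_k$ such that \emph{no} solution of $G_k(D^2h)=c_k$ is $\varepsilon_0$-close to $u_k$. Knowing that the limit $u_\infty$ solves $G_\infty(D^2u_\infty)=c_\infty$ contradicts nothing, because $u_\infty$ need not solve any of the $k$-th equations. You would have to construct the $h_k$ explicitly, e.g.\ via Dirichlet problems for $G_k$ with data $u_k$, equicontinuity up to $\partial\mathrm{B}_{3/4}$, and uniqueness for the limit problem.

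Since $F$ has no $x$-dependence, it is simpler to skip compactness altogether. Let $h$ solve $G(D^2h)=c$ in $\mathrm{B}_{3/4}$ with $h=u$ on $\partial\mathrm{B}_{3/4}$. Since $h$ is $C^2$ inside, if $\varphi$ touches $w:=u-h$ from above then $\varphi+h$ touches $u$ from above, so $\mathcal{M}^{+}_{\lambda,\Lambda}(D^2\varphi)\ge f-c$. Symmetrically, $\mathcal{M}^{-}_{\lambda,\Lambda}(D^2 w)\le f-c$ in the viscosity sense. ABP then gives $\|u-h\|_{L^\infty(\mathrm{B}_{3/4})}\le \mathrm{C}(n,\lambda,\Lambda)\,\delta$. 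This also supplies the bound $\|h\|_{L^\infty(\mathrm{B}_{3/4})}\le 2$ that your Evans--Krylov step uses; the compactness version, which only gives closeness on $\mathrm{B}_{1/2}$, does not provide it.

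Compared with the paper's method for its own theorems (Lemma \ref{Interpolationinterior}: Simon-type blow-up plus Liouville), your route gives genuine regularity for merely continuous viscosity solutions. It needs only convexity (through Evans--Krylov), not differentiability of $F$, at the price of the exponent cap $\beta<\bar\alpha$. The blow-up argument, by contrast, is purely an a priori estimate. It selects points nearly realizing $[D^2u_j]_{\alpha}$, so it requires $u\in C^{2,\alpha}$ from the start, and it relies on $(\mathrm{A}2)$ to make the tangential equation linear.
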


\medskip

Trudinger \cite{Trud1983} derived first- and second-order derivative estimates for classical solutions to fully nonlinear, uniformly elliptic equations of the form
\[
F(D^2u,Du,u,x) = 0 \quad \text{in} \quad \Omega \subset \mathbb{R}^n,
\]
under natural structural assumptions. These estimates were applied to the Dirichlet problem for Bellman equations, thereby generalizing earlier results by Lions \cite{Lions1981} and Evans \cite{Evans1983}. Furthermore, Evans \cite{Evans1982} originally presented a concise treatment of H\"older estimates for second derivatives.

\medskip

Huang \cite{Huang2002} subsequently investigated conditions under which a $C^{1,1}$ solution of the fully nonlinear, uniformly elliptic equation $F(D^2u) = 0$ must be classical (i.e., $C^2$). The work demonstrates that if $F \in C^1$ and satisfies the Liouville property, then every $C^{1,1}$ viscosity solution belongs to $C^{2,\alpha}$ for all $0 < \alpha < 1$. Recall that an operator $F$ possesses the \textbf{Liouville property} if every locally $C^{1,1}$ viscosity solution of $F(D^2u) = 0$ in $\mathbb{R}^n$ with bounded second derivatives is necessarily a quadratic polynomial.

\medskip

Cabr\'{e} and Caffarelli \cite{CabreCaff2003} examined intermediate conditions on $F$, weaker than convexity but stronger than no assumptions, that guarantee classical solutions to $F(D^2u) = 0$. Their work establishes local $C^{2,\alpha}$ regularity estimates for a class of non-convex/concave operators.

\medskip

We further note that Monneau \cite[Proposition 9.1]{Monneau2009} established pointwise $C^{2,\alpha}$ estimates (in the $L^p$ norm) for solutions to $F(D^2 u) = 0$ in $B_1$, requiring only $F \in C^2$ without convexity or concavity assumptions. These results hold under the condition of pointwise $C^2$-Dini regularity (in the $L^{\infty}$ norm).

\medskip

Cao \textit{et al.} \cite{CLW2011} obtained local $C^{2,\alpha}$ estimates for classical solutions of fully nonlinear, uniformly elliptic equations $F(D^2u) = 0$, expressed in terms of the Hessian matrix $D^2u$'s modulus of continuity. Their analysis assumes $F$ is merely locally $C^{1,\beta}$, dispensing with convexity or concavity requirements. Through an iterative scheme based on $L^2$ estimates, they derived H\"older continuity for the Hessian.

\medskip

For nearly twenty years, the question of whether \textit{arbitrary} fully nonlinear elliptic operators admit a general $C^2$ \textit{a priori} regularity theory remained unresolved. This problem was finally settled by Nadirashvili and Vl\u{a}du\c{t}'s counterexamples to $C^{1,1}$ regularity \cite{NV2007,NV2008}, which concluded this line of inquiry. Their work, however, stimulated new research directions. In light of the inherent obstacles to developing a universal existence theory for classical solutions to fully nonlinear equations, a major focus of current research involves identifying supplementary structural or qualitative conditions on both $F$ and $u$ that enable $C^2$ estimates.

\medskip

Regarding recent developments in local higher regularity estimates, Savin's seminal work \cite{Savin2007} examines viscosity solutions to general second-order fully nonlinear equations of the form
\[
F(D^2u, Du, u, x) = 0,
\]
where $u = 0$ constitutes a solution. Under the assumptions that $F$ is smooth and uniformly elliptic solely in a neighborhood of $(0, 0, 0, x)$, Savin proves interior $C^{2,\alpha}$ regularity for \textbf{flat solutions} - those with sufficiently small $L^{\infty}$ norm. Notably, the operator $F$ is initially taken to be merely measurable, with a Harnack inequality first established for flat solutions; higher regularity results then follow under additional smoothness assumptions on $F$.

\medskip

In their comprehensive study \cite{dosPrazTei2016} (see also \cite[Chapter 5]{João}), dos Prazeres and Teixeira investigate equations of the form
\begin{equation}\label{Eq-PrazTei}
F(D^2u,x) = f(x),
\end{equation}
where $F$ represents a non-convex, fully nonlinear, uniformly elliptic operator. The authors derive local $C^{2,\alpha}$ regularity for flat solutions to \eqref{Eq-PrazTei} (solutions with sufficiently small norm), provided the coefficients of $F$ and the source term $f$ belong to $C^{0,\alpha}$. 

\begin{theorem}[{\bf  $C^{2,\alpha}$ regularity - \cite[Theorem 2.2]{dosPrazTei2016}}] 
Let \( u \in C^0(\mathrm{B}_1) \) be a viscosity solution to
\[
F(D^2u,x) = f(x) \quad \text{in }  \quad \mathrm{B}_1,
\]
where \( F \) and \( f \) satisfy $(\mathrm{A1})–(\mathrm{A3})$ with \( \tau(t) = \mathrm{c}t^\alpha \) for some \( 0 < \alpha < 1 \). There exists \( \delta_0 > 0 \), depending only upon \( n, \lambda, \Lambda, \omega, \alpha \), and \( \tau(1) \), such that if
\[
\sup_{\mathrm{B}_1} |u| \leq \delta_0,
\]
then \( u \in C^{2,\alpha}(\mathrm{B}_{1/2}) \) and
\[
\|u\|_{C^{2,\alpha}(\mathrm{B}_{1/2})} \leq \mathrm{C}_0 \cdot \delta_0,
\]
where \( \mathrm{C}_0>0\) depends only upon \( n, \lambda, \Lambda, \omega \), and \( (1 - \alpha) \).
    
\end{theorem}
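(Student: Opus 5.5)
The plan is a compactness argument by contradiction, combined with the Caffarelli-type improvement of flatness iteration, around the tangential linear operator furnished by $(\mathrm{A}2)$. Normalize first: subtracting $F(\mathbf{O}_n,0)$, which is absorbed into $f$, we may assume $F(\mathbf{O}_n,0)=0$. The key step is an approximation lemma. Given $\varepsilon>0$, there is $\delta>0$ such that the following holds. Suppose $\sup_{\mathrm{B}_1}|u|\le 1$, $u$ is a viscosity solution of $\delta^{-1}F(\delta D^2u,x)=\delta^{-1}f(x)$, and the scaled data satisfy $\tau(1)\le\delta$ and $\|f\|_{\infty}\le\delta^2$. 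Then there is a solution $h$ of $\mathrm{tr}(\mathfrak{A}_0D^2h)=0$ in $\mathrm{B}_{3/4}$, with $\mathfrak{A}_0=D F(\mathbf{O}_n,0)\in\mathfrak{A}_{\lambda,\Lambda}$, such that $\|u-h\|_{L^\infty(\mathrm{B}_{1/2})}\le\varepsilon$.

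I prove this by contradiction. Take sequences $\delta_k\to0$ and operators $G_k(\mathrm{X},x)=\delta_k^{-1}F_k(\delta_k\mathrm{X},x)$. These are uniformly elliptic with the same $\lambda,\Lambda$. By $(\mathrm{A}2)$ and $(\mathrm{A}3)$, $G_k(\mathrm{X},x)\to\mathrm{tr}(\mathfrak{A}_0\mathrm{X})$ locally uniformly, since
$$|G_k(\mathrm{X},x)-\mathrm{tr}(D F_k(\mathbf{O}_n,x)\mathrm{X})|\le \omega(\delta_k\|\mathrm{X}\|)\|\mathrm{X}\|$$
and the $x$-dependence is controlled by $\tau$. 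Along a subsequence we may also assume $DF_k(\mathbf{O}_n,0)\to\mathfrak{A}_0$. Krylov--Safonov $C^{0,\beta}$ estimates give equicontinuity of the $u_k$. Stability of viscosity solutions then yields a limit solving the constant-coefficient equation, which is a contradiction.

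Next comes the first step of the iteration. Since $h$ is $C^{2,\alpha}$ (indeed smooth) with universal bounds, its second-order Taylor polynomial $P$ satisfies $\sup_{\mathrm{B}_\rho}|h-P|\le \mathrm{C}\rho^3$. Choose $\rho$ so that $\mathrm{C}\rho^3\le\frac12\rho^{2+\alpha}$, and then choose $\varepsilon=\frac12\rho^{2+\alpha}$. This gives $\sup_{\mathrm{B}_\rho}|u-P|\le\rho^{2+\alpha}$. The polynomial $P$ need not solve the original equation. Here the smallness $\sup|u|\le\delta_0$ enters: $P$ has small coefficients, so we adjust it slightly using $\mathfrak{A}_0$ to be an exact solution of $F(D^2P,0)=f(0)$, and the correction is absorbed into the error.

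The iteration then proceeds by induction. We construct quadratic polynomials $P_k$ with $\sup_{\mathrm{B}_{\rho^k}}|u-P_k|\le\delta_0\rho^{k(2+\alpha)}$ and $|P_{k+1}-P_k|$ bounded in the usual geometric fashion. At step $k$ we rescale as
$$v(x)=\frac{(u-P_k)(\rho^kx)}{\delta_0\rho^{k(2+\alpha)}}.$$
The function $v$ solves an equation with operator $\mathrm{X}\mapsto\frac{1}{\delta_0\rho^{k\alpha}}\bigl[F(\delta_0\rho^{k\alpha}\mathrm{X}+D^2P_k,\rho^kx)-f(\rho^kx)\bigr]$. This operator stays in the class $\mathscr{F}$ with the same $\lambda,\Lambda,\omega$. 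Its linearization point is $D^2P_k$, which is universally bounded, and the approximation lemma was proved uniformly over such points. The data oscillation scales as $\rho^{k\alpha}\tau(1)/(\delta_0\rho^{k\alpha})$, which stays below $\delta$ because $\tau(t)\le\mathrm{c}t^\alpha$, provided the original data are small compared with $\delta_0$. Once the $P_k$ are built, the standard Campanato characterization gives $u\in C^{2,\alpha}(\mathrm{B}_{1/2})$ with $\|u\|_{C^{2,\alpha}}\le \mathrm{C}_0\delta_0$, after covering $\mathrm{B}_{1/2}$ by translated balls.

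The main obstacle is the uniformity in the compactness lemma. The limit operator must be linear even though the linearization point $D^2P_k$ moves. This requires $\delta_k\to0$ to kill the nonlinearity through the modulus $\omega$ in $(\mathrm{A}2)$, together with a subsequence extraction of $DF(D^2P_k,\cdot)$. It also needs the flatness $\delta_0$ to keep $\|D^2P_k\|$ bounded throughout the iteration. I would handle this by carrying the bound $\sum_k|D^2P_{k+1}-D^2P_k|\le \mathrm{C}\delta_0\sum_k\rho^{k\alpha}$ as part of the induction hypothesis.
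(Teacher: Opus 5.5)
Your route is the one the paper attributes to dos Prazeres--Teixeira; the paper only cites this theorem and describes the method, without proving it. That method is: elliptic rescalings $\mu^{-1}F(\mu\mathrm{X},x)$, which keep $\lambda,\Lambda,\omega$ and by (A2) converge to a linear tangential operator, followed by improvement of flatness. The gap is your closing proviso that the data be ``small compared with $\delta_0$''. The theorem assumes nothing of the sort. $\tau(1)$ is arbitrary, and $f$ has no size bound; after your normalization it also absorbs $-F(\mathbf{O}_n,0)$. Nor can you obtain the proviso by choosing $\delta_0$. Your iteration needs roughly $\tau(1)\le\delta\,\delta_0$. It also needs a base polynomial $P_0$ with $F(D^2P_0,0)=f(0)$ and $\sup_{\mathrm{B}_1}|u-P_0|\lesssim\delta_0$; with $P_0=0$ this forces $f(0)=0$. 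Shrinking $\delta_0$ only makes $\tau(1)\le\delta\,\delta_0$ harder to satisfy.

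The missing step is a preliminary reduction, and this is exactly where the dependence of $\delta_0$ on $\tau(1)$ enters.

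(i) Flatness forces $f(0)$ to be small. If $f\ge K>0$ on $\mathrm{B}_r$, then $u-\frac{K}{2n\Lambda}|x|^2$ is a viscosity subsolution of $\mathcal{M}^+_{\lambda,\Lambda}(D^2w)\ge 0$ in $\mathrm{B}_r$. The maximum principle then gives $Kr^2\le 4n\Lambda\delta_0$. Combined with $|f(x)-f(0)|\le\tau(1)|x|^\alpha$, this yields $|f(0)|\lesssim\delta_0+\tau(1)^{2/(2+\alpha)}\delta_0^{\alpha/(2+\alpha)}$; the case $f\le -K$ is symmetric, using $\mathcal{M}^-_{\lambda,\Lambda}$. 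So start the induction at $P_0=\frac{t}{2}|x|^2$, where $F(t\,\mathrm{Id}_n,0)=f(0)$ and $|t|\le|f(0)|/(n\lambda)$.

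(ii) Replace $u$ by $u_r(x)=r^{-2}u(rx)$. It solves $F(D^2u_r,rx)=f(rx)$ with modulus $\tau(rt)=\tau(1)r^\alpha t^\alpha$, and $\sup_{\mathrm{B}_1}|u_r-P_0|\le r^{-2}\delta_0+|t|$. Let $\delta_\ast$ be the flatness level your small-data argument tolerates. First fix $r$ with $\tau(1)r^\alpha\le\delta\,\delta_\ast$, then take $\delta_0$ so small that $r^{-2}\delta_0+|t|\le\delta_\ast$.

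Scaling back, the constant in front of $\delta_0$ picks up negative powers of $r$, so it depends on $\tau(1)$. This is unavoidable. For $\Delta u=f$, take $u=-K\eta^{2+\alpha}\sin(x_1/\eta)$. These functions satisfy (A3) with $\tau(t)=2Kt^\alpha$, and $\sup_{\mathrm{B}_1}|u|\to0$ as $\eta\to0$. Yet $[D^2u]_{\alpha,\mathrm{B}_{1/2}}\ge 2K\pi^{-\alpha}$. Hence any valid bound requires $C_0\delta_0\ge\pi^{-\alpha}\tau(1)$, and with $\delta_0$ small no factor independent of $\tau(1)$ can work.

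A smaller point: the operators in your iteration are not literally in $\mathscr{F}$. Their value $G(\mathbf{O}_n,\cdot)$ is no longer constant, and the term $\delta_0^{-1}\rho^{-k\alpha}\|D^2P_k\|$ breaks (A3) in the form $\tau\|\mathrm{X}\|$. State the approximation lemma under the weaker hypothesis $|G(\mathrm{X},x)-G(\mathrm{X},y)|\le\delta(1+\|\mathrm{X}\|)$; its compactness proof goes through unchanged.
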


The proof of such a result is based on a combination of geometric tangential analysis and perturbative arguments inspired by compactness methods from the theory of elliptic partial differential equations. The authors construct a family of elliptic rescalings that preserve the ellipticity constants of the original operator, thereby allowing for the application of tangential linear elliptic regularity theory.

\medskip

In 2021, Bhattacharya and Warren, in \cite{BhatWarr2021}, established explicit interior \( C^{2,\alpha} \) estimates for viscosity solutions of fully nonlinear, uniformly elliptic equations that are sufficiently close to linear equations. Moreover, they provided a precise quantitative bound characterizing this closeness.

\begin{definition}[{\cite[
Definition 1.2]{BhatWarr2021}}] A uniformly elliptic, non-linear operator $F$ is said \textbf{almost linear} with constant $\varepsilon>0$ if
\begin{equation}
\|\mathrm{D}_{\mathrm{X}}F(\mathrm{M}) - \mathrm{D}_{\mathrm{X}}F(\mathrm{N})\| \leq \varepsilon
\end{equation}
for all $\mathrm{M}, \mathrm{N} \in \text{Sym}(n)$. Moreover, one defines $\varepsilon$ to be the \textbf{closeness constant} of $F$.
    
\end{definition}

\begin{theorem}[{\cite[
Theorem 1.4]{BhatWarr2021}}] Given $0<\lambda\leq \Lambda$, and $0 < \alpha < \overline{\alpha} < 1$, suppose that $F$ is almost linear with constant $\varepsilon_0$, and let $u \in C^0(\mathrm{B}_1)$ be a viscosity solution of 
$$
F(D^2u) = f(x) \quad \text{in} \quad \mathrm{B}_1. 
$$
If $f \in C^{0,\alpha}(\mathrm{B}_1)$, then $u \in C^{2,\alpha}(\mathrm{B}_{1/2})$ and the following estimate holds:
\begin{equation}
\|u\|_{C^{2,\alpha}(\mathrm{B}_{1/2})} \leq \mathrm{C}\left( \|u\|_{L^{\infty}(\mathrm{B}_1)} + \|f\|_{C^{0,\alpha}(\mathrm{B}_1)} \right),
\end{equation}
where $\mathrm{C}>0$ depends only on $n$, $\lambda$, $\Lambda$, $\varepsilon_0$, $\alpha$, and $\overline{\alpha}$.
\end{theorem}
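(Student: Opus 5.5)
The plan is to read the almost linear condition as a quantitative closeness to a \emph{constant coefficient} linear operator, and then to transfer Cordes--Nirenberg type $C^{1,\beta}$ regularity, with $\beta$ arbitrarily close to $1$, from difference quotients of $u$ to $D^2u$. I take ``almost linear with constant $\varepsilon_0$'' to mean that $\varepsilon_0$ lies below a threshold $\overline{\varepsilon}(n,\lambda,\Lambda,\alpha,\overline{\alpha})$. The threshold is fixed in Step 3 below and its choice is where $\overline{\alpha}$ enters; with no smallness the statement would contradict the Nadirashvili--Vl\u{a}du\c{t} examples.

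\textbf{Step 1 (structure).} Set $\mathfrak{A}_0 := \mathrm{D}F(\mathbf{O}_n)$, so $\lambda\mathrm{Id}_n\le \mathfrak{A}_0\le\Lambda\mathrm{Id}_n$. The condition $\|\mathrm{D}F(\mathrm{M})-\mathrm{D}F(\mathrm{N})\|\le\varepsilon_0$ gives $\|\mathrm{D}F(\mathrm{M})-\mathfrak{A}_0\|\le \varepsilon_0$ for every $\mathrm{M}$.

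\textbf{Step 2 (difference quotients).} For $|h|$ small and $e$ a unit vector, set $w_h(x) := (u(x+he)-u(x))/|h|^{\alpha}$. Formally $w_h$ solves the linear equation
\[
\mathrm{tr}\big(\mathbf{A}_h(x)D^2w_h\big) = \frac{f(x+he)-f(x)}{|h|^{\alpha}},\qquad \mathbf{A}_h(x):=\int_0^1 \mathrm{D}F\big(tD^2u(x+he)+(1-t)D^2u(x)\big)\,dt .
\]
Here $\mathbf{A}_h$ is only measurable, but $\|\mathbf{A}_h-\mathfrak{A}_0\|\le\varepsilon_0$ and the right-hand side is bounded by $[f]_{\alpha}$. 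For viscosity solutions this is justified in one of two ways. The first is to note that $u(\cdot+he)$ and $u$ are both viscosity solutions, so their difference is a viscosity solution of the Pucci-type inequalities associated with the class of operators within $\varepsilon_0$ of $\mathfrak{A}_0$. The second is to first regularize $F$ by mollification in $\text{Sym}(n)$, which preserves $\varepsilon_0$, prove an estimate independent of the regularization for smooth solutions, and pass to the limit by stability.

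\textbf{Step 3 (main step: $C^{1,\overline{\alpha}}$ estimate for nearly constant coefficients).} I would prove by a compactness/approximation iteration that a viscosity solution $w$ of $|\mathrm{tr}(\mathfrak{A}_0D^2w)-g|\le \varepsilon_0|D^2w|$ (in the Pucci sense), with $g\in L^\infty$, belongs to $C^{1,\overline{\alpha}}(\mathrm{B}_{3/4})$, with norm controlled by $\|w\|_\infty+\|g\|_\infty$.

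The iteration goes as follows. At each scale $\rho^k$, compare $w$ with the solution $v$ of $\mathrm{tr}(\mathfrak{A}_0D^2v)=0$ having the same boundary values. By ABP-type stability, $\|w-v\|_\infty\lesssim \varepsilon_0^{\gamma}+\|g\|_\infty$. The function $v$ is smooth, so it is approximated by its affine part to order $\rho^2$. Choosing $\rho$ with $C\rho^{2}\le\tfrac12\rho^{1+\overline{\alpha}}$, and then $\varepsilon_0$ small depending on $\rho$, yields a geometric decay $\rho^{k(1+\overline{\alpha})}$ of $w$ minus affine functions.

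I expect this step to be the main obstacle. The approximation must remain valid uniformly across scales, so the iteration must be set up so that the rescaled inequalities keep the same closeness constant $\varepsilon_0$; they do, since the condition is scale invariant.

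\textbf{Step 4 (conclusion).} Applying Step 3 to $w_h$ gives $\|Dw_h\|_{C^{0,\overline{\alpha}}(\mathrm{B}_{3/4})}\le C(\|u\|_{C^{0,\alpha}}+[f]_\alpha)$, uniformly in $h$. Since this bounds $|h|^{-\alpha}\big(Du(\cdot+he)-Du\big)$ uniformly, we get $Du\in C^{1,\alpha}$ by the standard characterization of H\"older spaces via difference quotients. Here $\alpha<1$, and $\alpha<\overline{\alpha}$ leaves room to absorb lower-order terms.

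This yields $[D^2u]_{\alpha,\mathrm{B}_{1/2}}\le C(\|u\|_{C^{1,\alpha}(\mathrm{B}_{3/4})}+\|f\|_{C^{0,\alpha}})$. The term $\|u\|_{C^{1,\alpha}(\mathrm{B}_{3/4})}$ is bounded by $C(\|u\|_{L^\infty(\mathrm{B}_1)}+\|f\|_{L^\infty})$ through the classical $C^{1,\alpha}$ estimate for uniformly elliptic equations, which is valid since $\alpha<\overline{\alpha}$ and $\varepsilon_0$ is small. Interpolation then controls the remaining lower-order norms, and the constant depends only on $n,\lambda,\Lambda,\varepsilon_0,\alpha,\overline{\alpha}$.
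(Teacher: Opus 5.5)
The paper only quotes this theorem from \cite{BhatWarr2021} and gives no proof, so your argument has to be judged on its own. Steps 1--3 are fine: reading $\varepsilon_0$ as a smallness threshold is right, and Step 3 is the standard Cordes/Caffarelli $C^{1,\overline{\alpha}}$ estimate for operators close to a constant-coefficient one. The argument breaks in Step 4.

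\textbf{Where Step 4 fails.} A uniform bound on $|h|^{-\alpha}(Du(\cdot+he)-Du)$ gives $Du\in C^{0,\alpha}$, not $C^{1,\alpha}$. What Step 3 actually gives, uniformly in $h$, is $\|Dw_h\|_{L^\infty}\le C$ and $[Dw_h]_{\overline{\alpha}}\le C$. Evaluating the second bound at $x$ and $x+he$ yields only
\[
|Du(x+2he)-2Du(x+he)+Du(x)| = |h|^{\alpha}\,|Dw_h(x+he)-Dw_h(x)| \le C|h|^{\alpha+\overline{\alpha}},
\]
whereas $Du\in C^{1,\alpha}$ requires $O(|h|^{1+\alpha})$. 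Since $\overline{\alpha}<1$, you get at best $u\in C^{2,\alpha+\overline{\alpha}-1}$ when $\alpha+\overline{\alpha}>1$, and not even $u\in C^2$ when $\alpha+\overline{\alpha}\le 1$.

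This loss of $1-\overline{\alpha}$ derivatives is built into the scheme. With measurable coefficients and a merely bounded right-hand side, $C^{1,\overline{\alpha}}$ is the best possible regularity for $w_h$; there is no $C^{1,1}$ even for the Laplacian. Normalizing by $|h|^{s}$ with $s>\alpha$ does not help either, because $(f(\cdot+he)-f)/|h|^{s}$ becomes unbounded. The hypothesis $\alpha<\overline{\alpha}$ cannot repair this by ``absorbing lower-order terms''.

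\textbf{The missing idea.} Use difference quotients only for equations with \emph{constant} right-hand side, and treat the H\"older dependence of $f$ perturbatively at the level of quadratic polynomials.

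First, take $h$ solving $G(D^2h)=c$, where $G(\mathrm{M})=\mu^{-1}[F(\mu\mathrm{M}+\mathrm{N})-F(\mathrm{N})]$. This family is almost linear with the same constant $\varepsilon_0$ and has the same ellipticity. The first-order quotients $t^{-1}(h(\cdot+te)-h)$ then lie in your $\varepsilon_0$-class with zero right-hand side. Step 3 therefore gives $Dh\in C^{1,\overline{\alpha}}$, i.e. interior $C^{2,\overline{\alpha}}$ estimates for the frozen homogeneous problem.

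Then run Caffarelli's iteration as in \cite[Ch.~8]{CafCabre1995}:
\begin{itemize}
\item normalize $[f]_{\alpha}$ to be small;
\item at each scale $\rho^k$, compare the rescaled $u-P_k$ with a solution of the frozen equation having the same boundary values;
\item control the difference by ABP/compactness, using $|f-f(x_0)|\le[f]_{\alpha}|x-x_0|^{\alpha}$;
\item use the $C\rho^{2+\overline{\alpha}}$ Taylor remainder of that solution to beat the target rate $\tfrac12\rho^{2+\alpha}$.
\end{itemize}
That last comparison of exponents is exactly where $\alpha<\overline{\alpha}$ enters.
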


\medskip 

Another condition that can lead to improved regularity arises when the ellipticity constants \( 0 < \lambda \leq \Lambda \) are sufficiently close. Specifically, Wu and Niu, in \cite[Theorem 1.4]{WuNiu2023}, employ compactness techniques to establish interior \( C^{2,\alpha} \) regularity for viscosity solutions of fully nonlinear, uniformly elliptic equations under the assumption that the ellipticity constants are nearly equal. It is worth noting that the homogeneous version of this result had already been obtained nearly a decade earlier by Da Silva in his Ph.D. thesis (see \cite[Ch. 5]{DaSilva-PhDThesis}).

\medskip

Finally, in 2024, Goffi, in \cite{Goffi2024}, undertakes a comprehensive study of \textit{a priori} estimates and Evans-Krylov-type regularity results in Hölder spaces for fully nonlinear, second-order, uniformly elliptic or uniformly parabolic equations, even in the absence of concavity or convexity of the operator on the space of symmetric matrices. As a consequence, the results yield Liouville-type theorems for polynomial solutions to elliptic equations, as well as for certain parabolic problems. Furthermore, Goffi addresses the following generalization of Nirenberg’s classical result \cite{Nirenberg1953} to the setting of viscosity solutions.

\begin{theorem}[{\bf \cite[Theorem Appendix A.1]{Goffi2024}}]
    
Let $u : \mathrm{B}_1 \to \mathbb{R}$, with $\mathrm{B}_1 \subset \mathbb{R}^2$, and suppose that $u$ is a continuous viscosity solution to
\[
F(D^2 u) = f(x) \quad \text{in } \mathrm{B}_1.
\]
Assume that $F : \text{Sym}(2) \to \mathbb{R}$ is uniformly elliptic (no other assumptions are required), and that $f \in C^{0,\alpha}(\mathrm{B}_1)$. Then, for some small $\alpha \in (0, 1)$, we have the regularity estimate
\[
\|u\|_{C^{2,\alpha}(\mathrm{B}_{1/2})} \leq \mathrm{C}\left( \|u\|_{L^{\infty}(\mathrm{B}_1)} + \|f\|_{C^{0, \alpha}(\mathrm{B}_1)} \right),
\]
where the constants $\alpha$ and $\mathrm{C}>0$ depend only on $\lambda$ and $\Lambda$.
\end{theorem}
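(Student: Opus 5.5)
The approach is a reduction to the homogeneous constant right-hand-side problem, followed by Caffarelli's perturbation scheme. In \cite[Ch.~8]{CafCabre1995}, convexity of $F$ is used only to guarantee interior $C^{2,\bar\alpha}$ estimates for solutions of $F(D^2w)=\mathrm{const}$. So I would first prove such an estimate in dimension two with no structure on $F$ beyond uniform ellipticity, and then run Caffarelli's approximation argument verbatim. This yields the estimate with exponent $\min\{\alpha,\bar\alpha\}$, which is consistent with the ``for some small $\alpha$'' in the statement.

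\textbf{Step 1 (the two-dimensional trick, a priori).} Let $w$ be a smooth solution of $F(D^2w)=c$ in $\mathrm{B}_1\subset\mathbb{R}^2$. For a unit direction $e$ and increment $h$, the difference quotient $v=\delta_h^e w$ solves $a^h_{ij}(x)\,\partial_{ij}v=0$. Here
\[
a^h_{ij}(x)=\int_0^1 \partial_{ij}F\bigl(tD^2w(x+he)+(1-t)D^2w(x)\bigr)\,dt
\]
when $F$ is smooth; otherwise I would use the standard measurable-coefficient representation given by ellipticity. In either case $\lambda\,\mathrm{Id}\le (a^h_{ij})\le\Lambda\,\mathrm{Id}$. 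Normalizing $a^h_{22}=1$ and using $\partial_2 v_1=\partial_1 v_2$, the equation $v_{22}=-a_{11}v_{11}-2a_{12}v_{12}$ gives
\[
\partial_1\bigl(a_{11}\partial_1 H+2a_{12}\partial_2 H\bigr)+\partial_2\partial_2 H=0,\qquad H=\partial_1 v .
\]
This is a uniformly elliptic divergence-form equation with bounded measurable, possibly nonsymmetric, coefficients; the analogous statement holds for $\partial_2 v$. De Giorgi--Nash--Moser then gives $[\partial_k\delta^e_h w]_{C^{\bar\alpha}(\mathrm{B}_{1/2})}\le C\|\partial_k\delta_h^e w\|_{L^2(\mathrm{B}_{3/4})}$, with $\bar\alpha=\bar\alpha(\lambda,\Lambda)$.

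To control the right-hand side, I would first apply the same identity with $v=w$ itself. Then $H=w_1$ solves a divergence-form equation, and Caccioppoli's inequality gives $\|D^2w\|_{L^2(\mathrm{B}_{3/4})}\le C\|w\|_{L^\infty(\mathrm{B}_1)}$. Combining this with the bound $\|D\delta_h w\|_{L^2}\le \|D^2w\|_{L^2}$ and letting $h\to0$, I obtain
\[
\|D^2w\|_{C^{\bar\alpha}(\mathrm{B}_{1/2})}\le C\bigl(\|w\|_{L^\infty(\mathrm{B}_1)}+|c|\bigr).
\]
The constant $c$ is absorbed by subtracting a quadratic polynomial $\tfrac{t}{2}|x|^2$ with $F(t\,\mathrm{Id})=c$, together with the bound $|t|\le |c-F(\mathbf{O})|/\lambda$.

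\textbf{Step 2 (viscosity solutions by approximation).} Given a viscosity solution $w$ of $F(D^2w)=c$, I would mollify $F$ to get smooth uniformly elliptic $F_\varepsilon$ with the same $\lambda,\Lambda$. Next, I would solve the Dirichlet problems $F_\varepsilon(D^2w_\varepsilon)=c$ in $\mathrm{B}_{3/4}$ with $w_\varepsilon=w$ on $\partial\mathrm{B}_{3/4}$, possibly after smoothing the boundary data. Classical solvability holds in two dimensions: the continuity method closes precisely thanks to the a priori bound of Step 1, combined with the standard global two-dimensional estimates of Nirenberg type. Step 1 gives uniform $C^{2,\bar\alpha}$ bounds in $\mathrm{B}_{1/2}$, and Krylov--Safonov gives equicontinuity up to the boundary. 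Stability of viscosity solutions, together with uniqueness for the $x$-independent Dirichlet problem (comparison principle), shows $w_\varepsilon\to w$. Hence $w$ inherits the estimate.

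\textbf{Step 3 (inhomogeneous case).} Now I would run the argument of \cite[Theorem 8.1]{CafCabre1995}. On dyadic balls $\mathrm{B}_{r_k}$, I compare $u$ with the solution $w_k$ of $F(D^2w_k)=f(0)$ having the same boundary values. The ABP and $C^{1,\beta}$ closeness estimates give $\|u-w_k\|_{L^\infty(\mathrm{B}_{r_k})}\lesssim r_k^{2+\alpha}[f]_{C^{0,\alpha}}$. The $C^{2,\bar\alpha}$ estimate of Steps~1--2 then provides quadratic polynomials $P_k$ with $\|u-P_k\|_{L^\infty(\mathrm{B}_{r_k})}\le C r_k^{2+\gamma}$, where $\gamma=\min\{\alpha,\bar\alpha\}$ up to the usual slight reduction. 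Summing over $k$ and covering $\mathrm{B}_{1/2}$ by translated balls gives the stated $C^{2,\gamma}$ estimate.

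\textbf{Main obstacle.} I expect the main obstacle to be Step~1 when $F$ is merely uniformly elliptic. The nonsmooth case has to go through difference quotients and the measurable representation of $a^h_{ij}$, and the $L^2$ Hessian bound has to be obtained before the Hölder bound without circularity. My first attempt would be to prove everything for the smooth $F_\varepsilon$ with constants depending only on $\lambda,\Lambda$, so that the nonsmooth case enters only through the limit in Step~2.
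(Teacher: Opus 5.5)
The paper gives no proof of this statement; it only quotes it from \cite{Goffi2024} as background, so there is no argument in the text to compare yours with. Your route is the standard proof of this result, and it is sound: Nirenberg's two-dimensional $C^{2,\bar\alpha}$ estimate for the homogeneous problem, obtained from difference quotients, the divergence-form equation for $\partial_k v$ and De Giorgi--Nash--Moser; then its transfer to viscosity solutions by mollifying $F$, classical solvability in two variables, and comparison; then Caffarelli's perturbation scheme from \cite[Ch.~8]{CafCabre1995}. As a check, the divergence-form identity holds weakly already for $v\in C^2$, and De Giorgi--Nash--Moser applies to the nonsymmetric coefficients you obtain. So Step 1 does not need more smoothness than the classical solutions of Step 2 provide.

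A few points should be tightened.

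First, Caccioppoli applied to $w_1,w_2$ bounds $\|D^2w\|_{L^2(\mathrm{B}_{3/4})}$ by $\|Dw\|_{L^2(\mathrm{B}_{7/8})}$, not by $\|w\|_{L^\infty(\mathrm{B}_1)}$. Close this with an interpolation and absorption argument, or with the interior $C^{1,\beta}$ estimate for $x$-independent equations.

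Second, your bound $|t|\le |c-F(\mathbf{O}_n)|/\lambda$ brings $|F(\mathbf{O}_n)|$ into the constants, which contradicts the claimed dependence on $\lambda,\Lambda$ only. Instead, compare $w$ with the paraboloids $\tfrac{\sigma}{2}|x|^2$, $\sigma\neq t$, using the sub- and supersolution properties. Since $F(\sigma\,\mathrm{Id})<c$ for $\sigma<t$, $w-\tfrac{\sigma}{2}|x|^2$ cannot have an interior maximum. This gives $|t|\le 4\|w\|_{L^\infty(\mathrm{B}_1)}$, and in Step 3 it gives $|t|\le \mathrm{C}(\|u\|_{L^\infty}+\|f\|_{L^\infty})$.

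Third, Caffarelli's iteration applies the $C^{2,\bar\alpha}$ estimate to the whole family of rescaled, shifted operators $\mathrm{M}\mapsto r^{-k\gamma}\bigl[F(r^{k\gamma}\mathrm{M}+D^2P_k)-f(0)\bigr]$, not only to $F-f(0)$. In the $n$-dimensional theory this is where convexity is really used. Your Step 1 covers the whole family only because its constant depends on $(\lambda,\Lambda)$ alone, so state that explicitly.

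Finally, the scheme requires $\gamma<\bar\alpha$ strictly, for example $\gamma=\min\{\alpha,\bar\alpha/2\}$. This is consistent with the ``for some small $\alpha$'' in the statement.
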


\subsection*{Higher estimates for $2$nd-order fully nonlinear PDEs: up-to-boundary scenario}

Krylov, in \cite{Krylov1982}, studied operators of the form \(\displaystyle F[\cdot] = \inf_j F_j[\cdot] \) for \( j = 1, 2, \ldots \), where each \( F_j \) is positively homogeneous of degree one. Under appropriate structural assumptions, he established the existence and uniqueness of classical solutions \( u \in C^{2,\alpha} \) to fully nonlinear second-order elliptic boundary value problems:
\[
F(D^2 u, D u, u, x) = 0 \quad \text{in } \Omega, \qquad u = g \quad \text{on } \partial \Omega.
\]
The argument hinges on the derivation of a priori estimates in Hölder spaces. Notably, the Hamilton-Jacobi-Bellman and Monge-Ampère equations exemplify the applicability of this framework.

\medskip 
Subsequently, in \cite{Krylov1983}, Krylov extended his analysis to operators \( F \) belonging to the class of so-called boundedly inhomogeneous functions. This class includes, for instance, operators of the form \(\displaystyle F[\cdot] = \inf_k F_k[\cdot] \), where each \( F_k \) is again positively homogeneous of degree one. In this setting, Krylov proved boundary regularity results, showing that solutions belong to \( C^{2,\alpha}(\overline{\Omega}) \). His proof is technically intricate, involving a reduction to a degenerate boundary equation and the subsequent derivation of Hölder estimates for this auxiliary problem.

\medskip 
In a later development, Safonov, in \cite{Safonov1989}, investigated the Dirichlet problem for fully nonlinear second-order elliptic equations of the form
\begin{equation}\label{Eq-Safonov}
 F\left(u_{x_i x_j}, u_{x_i}, u, x\right) = 0 \quad \text{in } \Omega, \quad u = g \text{ on } \partial\Omega,
\end{equation}
where $\Omega \subset \mathbb{R}^n$ is an open domain. This general formulation encompasses classical problems such as the Bellman equation:
\[
\displaystyle  F\left(u_{x_i x_j}, u_{x_i}, u, x\right) \coloneqq \sup_k \left[ \sum_{i, j=1}^n a_{ij}^k(x) u_{x_i x_j} + \sum_{i=1}^n b_i^k(x) u_{x_i} + c^k(x) u + f^k(x) \right] = 0,
\]
with coefficients belonging to \( C^\alpha(\Omega) \) and uniformly bounded. Assuming appropriate conditions on \( F \), \( \Omega \), and the boundary data \( g \), Safonov established that the problem \eqref{Eq-Safonov} admits a unique solution in the space \( C^{2,\alpha}_{\text{loc}}(\Omega) \cap C^0(\Omega) \) for sufficiently small \( \alpha > 0 \). Moreover, if \( \partial\Omega \in C^{2,\alpha} \)—for example, when \( \Omega = \{ x \in \mathbb{R}^n : \psi(x) > 0 \} \) with \( \psi \in C^{2,\alpha}(\mathbb{R}^n) \) satisfying \( |D \psi| \geq 1 \) on \( \partial\Omega \)—and \( g \in C^{2,\alpha}(\Omega) \), then the solution actually belongs to \( C^{2,\alpha}(\Omega) \).

\medskip 
More recently, Silvestre and Sirakov, in \cite{SilSir2014}, analyzed the boundary regularity of continuous viscosity solutions to fully nonlinear, uniformly elliptic equations of the form
\[
F(D^2u, Du, x) = f(x)
\]
posed in bounded domains \( \Omega \subset \mathbb{R}^n \), with Dirichlet boundary conditions on a portion of \( \partial\Omega \). The same question is also addressed for viscosity solutions of the differential inequalities
\[
\mathcal{M}^+_{\lambda,\Lambda}(D^2u) + \mathfrak{L}|Du| \geq -\|f\|_{L^\infty(\Omega)} \quad \text{and} \quad 
\mathcal{M}^-_{\lambda,\Lambda}(D^2u) - \mathfrak{L}|Du| \leq \|f\|_{L^\infty(\Omega)},
\]
where \( \mathcal{M}^\pm_{\lambda,\Lambda} \) denote the Pucci extremal operators and \( \mathfrak{L} \geq 0 \) is the Lipschitz constant of \( F \) with respect to the gradient variable. It is crucial to emphasize that the operator \( F \) is not assumed to be convex or concave in \( D^2u \). To address the lack of structural regularity, the authors employ sup-convolutions as an essential technical tool. Assuming sufficient regularity for the boundary data \( g = u|_{\partial\Omega} \) and under suitable structural assumptions on \( F \), they show that for every continuous viscosity solution \( u \), there exist functions \( \mathscr{G} \in C^{0,\alpha}(\partial\Omega, \mathbb{R}^n) \) and \( \mathbb{H} \in C^{0,\alpha}(\partial\Omega, \text{Sym}(n)) \), representing the boundary gradient and Hessian of \( u \), respectively. More precisely, the following Taylor-type expansion holds:
\[
\left|u(x) - u(x_0) - \mathscr{G}(x_0) \cdot (x - x_0) - \frac{1}{2} \mathbb{H}(x_0)(x - x_0) \cdot (x - x_0)\right| \leq \mathrm{C} \mathrm{W} |x - x_0|^{\alpha + 2},
\]
for all \( x_0 \in \partial\Omega \) and \( x \in \Omega \cap B_r(x_0) \), where the constants \( \mathrm{C} \), \( \mathrm{W} \), and \( \alpha \) depend on the ellipticity constants \( \lambda \), \( \Lambda \), the dimension \( n \), the regularity of \( \Omega \), and the Hölder norms of the boundary data \( g = u|_{\partial\Omega} \), the source term \( f \), and \( \|u\|_{L^\infty(\Omega)} \).

\section{Preliminaries and auxiliary results}

In this section, we provide essential definitions and auxiliary results that are fundamental to our approach in establishing the Schauder estimates, within the context of our discussion.

First, we recall the definition of H\"{o}lder spaces.

\begin{definition}[{\bf H\"{o}lder Spaces} - {\cite[Pag. 6]{FR-O}}]
Given $\alpha \in (0, 1]$, the \textbf{H\"{o}lder space} $C^{0,\alpha}(\overline{\Omega })$ consists of all continuous functions $u \in C(\overline{\Omega })$ for which the H\"{o}lder semi-norm 
\[
[u]_{C^{0,\alpha}(\overline{\Omega } )} := \sup_{\substack{x, y \in \overline{\Omega }  \\ x \neq y}} \frac{|u(x) - u(y)|}{|x - y|^{\alpha}} 
\]
is finite. The corresponding \textbf{H\"{o}lder norm} is defined by
\[
\|u\|_{C^{0,\alpha}(\overline{\Omega } )} := \|u\|_{L^{\infty}(\Omega)} + [u]_{C^{0,\alpha}(\overline{\Omega } )}.
\]
Moreover, when $\alpha=1$, the space $C^{0,1}(\overline{\Omega } )$ coincides with the classical space of \textbf{Lipschitz continuous functions}.

More generally, for any integer $k \in \mathbb{N}$ and $\alpha \in (0, 1]$, the space $C^{k,\alpha}(\overline{\Omega } )$ is defined as the set of functions $u \in C^k(\overline{\Omega } )$ such that the norm
\[
\|u\|_{C^{k,\alpha}(\overline{\Omega } )} := \|u\|_{C^k(\overline{\Omega } )} + [D^k u]_{C^{0,\alpha}(\overline{\Omega } )}
\]
is finite, where
\[
\|u\|_{C^k(\overline{\Omega } )} := \|u\|_{L^{\infty}(\overline{\Omega })} + \sum_{j=1}^{k} \sum_{l \in \mathbb{N}^n \atop{|l|=j}} \|D^l u\|_{L^{\infty}(\overline{\Omega })} \quad \text{and} \quad [D^k u]_{C^{0,\alpha}(\overline{\Omega })} \defeq \sum_{|l|=k} [\partial^{l} u]_{C^{0, \alpha}(\overline{\Omega})} 
\]

\end{definition}

\bigskip

There exist several equivalent definitions and characterizations of H\"{o}lder spaces. The following property will be particularly instrumental for our purposes:

\medskip

\noindent
\textbf{(Property P1)} Let $\Omega \subset \mathbb{R}^n$, $k \in \mathbb{N}$, and $\alpha \in (0,1]$. Suppose that $(u_i)_{i \in \mathbb{N}}$ is a sequence of functions satisfying
\[
\|u_i\|_{C^{k,\alpha}(\overline{\Omega})} \leq \mathrm{C}_0,
\]
for some constant $\mathrm{C}_0 > 0$ independent of $i$, and that $u_i \to u_0$ uniformly in $\overline{\Omega}$. Then,
\[
u_0 \in C^{k,\alpha}(\overline{\Omega}) \quad \text{and} \quad \|u_0\|_{C^{k,\alpha}(\overline{\Omega})} \leq \mathrm{C}_0.
\]

\bigskip

\begin{theorem}[{\bf Arzelà--Ascoli {\cite[Theorem 1.7]{FR-O}}}] \label{Theorem-Ascoli}
Let $\Omega \subset \mathbb{R}^n$ and $\alpha \in (0, 1)$. Suppose that $(u_i)_{i \in \mathbb{N}}$ is a sequence of functions satisfying
\[
\|u_i\|_{C^{0,\alpha}(\overline{\Omega})} \leq \mathrm{C}_0.
\]
Then, there exists a subsequence $(u_{i_j})_{j \in \mathbb{N}}$ that converges uniformly to a function $u \in C^{0,\alpha}(\overline{\Omega})$.

More generally, combining this result with  \textnormal{\textbf{Property P1}}, we obtain the following: if
\[
\|u_i\|_{C^{k,\alpha}(\overline{\Omega})} \leq \mathrm{C}_0,
\]
for some $\alpha \in (0, 1)$ and integer $k \in \mathbb{N}$, then there exists a subsequence $(u_{i_j})_{j\in \mathbb{N}}$ that converges in the $C^k(\overline{\Omega})$ norm to a function $u \in C^{k,\alpha}(\overline{\Omega})$.
\end{theorem}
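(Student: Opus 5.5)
The plan is the classical diagonal argument, run first at the level $k=0$ and then applied to each derivative. Throughout I assume $\Omega$ is bounded, so that $\overline{\Omega}$ is compact, and for the $C^k$ part that $\Omega$ is convex, or more generally Lipschitz (a ball or half-ball, which is the setting used later in the paper). If $\Omega$ is unbounded, I would apply the argument on an exhausting sequence of compact sets and take one further diagonal subsequence; this yields locally uniform convergence.

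\emph{Case $k=0$.} The bound $\|u_i\|_{C^{0,\alpha}(\overline{\Omega})}\le \mathrm{C}_0$ gives two facts. The family is uniformly bounded, since $\|u_i\|_{L^\infty}\le \mathrm{C}_0$. It is also uniformly equicontinuous, since $|u_i(x)-u_i(y)|\le \mathrm{C}_0|x-y|^\alpha$.
\begin{enumerate}[(1)]
\item Fix a countable dense set $\{x_m\}\subset\overline{\Omega}$. By Bolzano--Weierstrass and a Cantor diagonal extraction, obtain a subsequence $(u_{i_j})$ such that $u_{i_j}(x_m)$ converges for every $m$.
\item Given $\varepsilon>0$, choose $\delta$ with $\mathrm{C}_0\delta^\alpha<\varepsilon/3$. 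By compactness, finitely many $x_m$ form a $\delta$-net of $\overline{\Omega}$.
\item For $j,l$ large, the values at these finitely many points differ by less than $\varepsilon/3$. The triangle inequality, combined with the Hölder bound, then gives $\sup_{\overline{\Omega}}|u_{i_j}-u_{i_l}|<\varepsilon$.
\item Hence $(u_{i_j})$ is uniformly Cauchy and converges uniformly to some continuous $u$.
\end{enumerate}
Passing to the limit in $|u_{i_j}(x)-u_{i_j}(y)|\le \mathrm{C}_0|x-y|^\alpha$ and in the sup bound gives $u\in C^{0,\alpha}(\overline{\Omega})$, which is exactly Property P1 with $k=0$.

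\emph{General $k$.} For each multi-index $l$ with $|l|\le k$, I would show that $(D^l u_i)_i$ is bounded in $C^{0,\alpha}(\overline{\Omega})$.
\begin{itemize}
\item For $|l|=k$ this is the hypothesis.
\item For $|l|<k$, $D^l u_i$ is bounded and its gradient is bounded by $\mathrm{C}_0$. On a convex domain the mean value theorem gives $|D^lu_i(x)-D^lu_i(y)|\le \mathrm{C}_0|x-y|$. Since $\overline{\Omega}$ is bounded, this implies a Hölder bound $\le \mathrm{C}_0\,\mathrm{diam}(\Omega)^{1-\alpha}|x-y|^\alpha$.
\end{itemize}
There are finitely many multi-indices, so applying the $k=0$ case successively (each time to a further subsequence) produces one subsequence along which $D^l u_{i_j}\to v_l$ uniformly for every $|l|\le k$, with $v_0=u$.

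It remains to identify $v_l=D^l u$. I would do this by induction on $|l|$, using the identity
\[
D^{l}u_{i_j}(x+te_r)-D^{l}u_{i_j}(x)=\int_0^t \partial_r D^{l}u_{i_j}(x+se_r)\,ds
\]
along segments contained in $\Omega$. Passing to the uniform limit gives $v_l(x+te_r)-v_l(x)=\int_0^t v_{l+e_r}(x+se_r)\,ds$. Since $v_{l+e_r}$ is continuous, this shows $\partial_r v_l=v_{l+e_r}$, so $u\in C^k(\overline{\Omega})$ and $u_{i_j}\to u$ in the $C^k(\overline{\Omega})$ norm. Finally, Property P1 applied to the sequence, or directly the $k=0$ argument applied to $D^k u_{i_j}$, gives $u\in C^{k,\alpha}(\overline{\Omega})$ with the same bound.

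I expect the main obstacle to be the geometric step: passing from a bound on $D^{|l|+1}u_i$ to a Hölder bound on $D^l u_i$, and integrating along segments. Both need $\Omega$ to be bounded and sufficiently regular (convex, or Lipschitz via chain-of-balls arguments). For a general open set one only gets interior, locally uniform convergence, so I would state the result under that standing assumption, which is harmless for the balls and half-balls used later in the paper.
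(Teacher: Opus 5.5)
Your proof is correct. It is the standard textbook argument: for $k=0$, diagonal extraction on a countable dense set combined with uniform equicontinuity; for general $k$, apply the $k=0$ case to every $D^l u_i$ with $|l|\le k$, then identify $v_l=D^l u$ by integrating along coordinate segments. The paper does not prove this statement. It imports it as a black box from \cite[Theorem 1.7]{FR-O}, so there is no competing route to compare with.

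What your write-up adds is the right set of hypotheses, and these are not cosmetic. As literally stated, for an arbitrary $\Omega\subset\mathbb{R}^n$, the theorem is false.
\begin{itemize}
\item \emph{Unbounded $\Omega$, already for $k=0$.} Take $\Omega=\mathbb{R}^n$ and the translates $u_i(x)=\phi(x-ie_1)$ of a fixed bump $\phi\in C^\infty_c$, $\phi\not\equiv 0$. They are bounded in $C^{0,\alpha}$, yet $\sup|u_i-u_j|=\sup|\phi|$ once $|i-j|$ is large, so no subsequence converges uniformly.
\item \emph{Bounded $\Omega$, for $k\ge 1$.} Let $\Omega\subset\mathbb{R}$ be the union of the intervals $I_m=(2^{-2m-1},2^{-2m})$, and set $u_i=\mathbf{1}_{I_i}$. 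Each $u_i$ is continuous on $\overline{\Omega}$ because the closures $\overline{I_m}$ are pairwise disjoint, and $u_i'\equiv 0$. With the paper's norm this gives $\|u_i\|_{C^{1,\alpha}(\overline{\Omega})}=1$, yet $\|u_i-u_j\|_{L^\infty}=1$ for all $i\neq j$.
\end{itemize}
The second example is exactly the failure of what you called the geometric step: the top-order seminorm $[D^k u]_\alpha$ does not control the oscillation of lower-order derivatives on a set that is not quasiconvex.

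Your formulation is therefore the correct one: bounded convex (or Lipschitz) $\Omega$, plus the locally uniform version via exhaustion for unbounded sets. It also covers every use in the paper. There the theorem is applied on balls and half-balls, where the normalization $v_j(0)=|Dv_j(0)|=\|D^2v_j(0)\|=0$ and $[D^2v_j]_\alpha\le 1$ give $R$-dependent $C^{2,\alpha}$ bounds. This yields $C^2$ convergence of the blow-up sequences on compact subsets of $\mathbb{R}^n$ and of $\overline{\mathbb{R}^n_+}$.
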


\bigskip
\textbf{Interpolation Inequalities in H\"{o}lder Spaces --- {\cite[Lemma 6.35]{Tru01}}.} 
A fundamental tool employed throughout this article is the following interpolation inequality.

\textit{For any $0 \leq \gamma < \alpha < \beta \leq 1$ and every $\varepsilon > 0$, we have
\begin{equation} \label{eq:interpolation1}
\|u\|_{C^{0,\alpha}(\Omega)} \leq \mathrm{C}_\varepsilon \|u\|_{C^{0,\gamma}(\Omega)} + \varepsilon \|u\|_{C^{0,\beta}(\Omega)},
\end{equation}
where $\mathrm{C}_\varepsilon$ is a constant depending only on $n$ and $\varepsilon$. (When $\gamma = 0$, the space $C^{0,\gamma}$ is interpreted as $L^\infty$.)}

This inequality is a consequence of the \textbf{standard interpolation estimate}:
\[
\|u\|_{C^{0,\alpha}(\Omega)} \leq \|u\|_{C^{0,\gamma}(\Omega)}^{t} \|u\|_{C^{0,\beta}(\Omega)}^{1 - t}, \quad \text{\textit{with}} \quad t = \frac{\beta - \alpha}{\beta - \gamma}.
\]

More generally, inequality \eqref{eq:interpolation1} can be extended to higher-order H\"{o}lder norms. In particular, we will frequently utilize the following special cases: \textit{for any $\varepsilon > 0$ and $\alpha \in (0,1)$,
\begin{equation}\label{eq:interpolation-gradiente}
    \|D u\|_{L^\infty(\Omega)} \leq \mathrm{C}_\varepsilon \|u\|_{L^\infty(\Omega)} + \varepsilon [D u]_{C^{0,\alpha}(\Omega)},
\end{equation}
and
\begin{equation} \label{eq:interpolation2}
\|u\|_{C^2(\Omega)} = \|u\|_{C^{1,1}(\Omega)} \leq \mathrm{C}_\varepsilon \|u\|_{L^\infty(\Omega)} + \varepsilon [D^2 u]_{C^{0,\alpha}(\Omega)}.
\end{equation}}

\vspace{0.5cm}
The following result represents a refinement of the classical \textbf{Liouville’s theorem}, characterizing the structure of harmonic functions in $\mathbb{R}^n$ subject to polynomial growth conditions. (see, \cite[Proposition 1.19]{FR-O}).

\begin{theorem}[{\bf Liouville’s theorem with growth}] \label{Theorem-Liouville's}
Assume that $u$ is a solution of $\Delta u = 0$ in $\mathbb{R}^n$ satisfying
\[
|u(x)| \leq \mathrm{C}(1 + |x|^\gamma) \quad \text{for all } x \in \mathbb{R}^n,
\]
for some constant $\mathrm{C} > 0$ and exponent $\gamma > 0$. Then, $u$ is a polynomial of degree at most $\lfloor \gamma \rfloor$.

\end{theorem}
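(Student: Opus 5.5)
The statement immediately above is the Liouville theorem with polynomial growth. My plan is to prove it through interior derivative estimates for harmonic functions. The standard estimate, obtained from the mean value property applied iteratively, or from Cauchy-type estimates, states: if $\Delta v = 0$ in $\mathrm{B}_R(x_0)$, then for every multi-index $l$ with $|l| = k$,
\[
|\partial^{l} v(x_0)| \leq \frac{\mathrm{C}(n,k)}{R^{k}} \|v\|_{L^\infty(\mathrm{B}_R(x_0))}.
\]
I would prove this first for $k=1$. Each derivative $\partial_i v$ is harmonic, so the mean value property on $\mathrm{B}_{R}(x_0)$ and the divergence theorem give
\[
\partial_i v(x_0) = \frac{1}{|\mathrm{B}_R|}\int_{\partial \mathrm{B}_R(x_0)} v\,\nu_i,
\]
and hence $|\partial_i v(x_0)| \leq \frac{n}{R}\|v\|_{L^\infty(\mathrm{B}_R(x_0))}$. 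I would then iterate on nested balls of radii $R/k$ to get the order-$k$ bound with $\mathrm{C}(n,k) = (nk)^k$. I also use that harmonic functions are smooth, which follows from the mean value property by mollification.

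Next, I fix $k = \lfloor \gamma \rfloor + 1$, so that $k > \gamma$, and an arbitrary point $x_0 \in \mathbb{R}^n$. For $R > |x_0|$ we have $\mathrm{B}_R(x_0) \subset \mathrm{B}_{2R}$, so the growth hypothesis gives
\[
\|u\|_{L^\infty(\mathrm{B}_R(x_0))} \leq \mathrm{C}\bigl(1 + (2R)^{\gamma}\bigr).
\]
Combining this with the derivative estimate yields
\[
|\partial^{l} u(x_0)| \leq \mathrm{C}(n,k)\,\mathrm{C}\,\frac{1+(2R)^\gamma}{R^{k}}.
\]
Since $k > \gamma$, the right-hand side tends to $0$ as $R \to \infty$. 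Hence every derivative of order $k$ vanishes identically on $\mathbb{R}^n$.

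Finally, since $u$ is smooth with all derivatives of order $\lfloor\gamma\rfloor+1$ vanishing, Taylor's formula with remainder centered at the origin shows that $u$ coincides with its Taylor polynomial of degree $\lfloor \gamma\rfloor$. Thus $u$ is a polynomial of degree at most $\lfloor\gamma\rfloor$. The only delicate step is the iterated derivative estimate, specifically keeping the constant independent of $R$ through correct scaling. This is settled by applying the scale-invariant $k=1$ bound on balls of radius $R/k$ nested inside one another.
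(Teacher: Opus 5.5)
Your proof is correct and follows essentially the same route as the argument behind this statement, which the paper quotes without proof from Fern\'andez-Real and Ros-Oton (Proposition 1.19): scale-invariant interior derivative estimates for harmonic functions are applied on balls of radius $R\to\infty$, which forces every derivative of order $\lfloor\gamma\rfloor+1$ to vanish, and Taylor's formula then finishes the proof. Your nested-ball iteration giving the constant $(nk)^k$ and your use of the inclusion $\mathrm{B}_R(x_0)\subset\mathrm{B}_{2R}$ for $R>|x_0|$ are both handled correctly.
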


\vspace{0.3cm}

The following abstract lemma serves as a fundamental tool in establishing the transition from Lemma~\ref{Interpolationinterior} to Theorems~\ref{thm:schauder} and~\ref{T2}.

\begin{lemma}[{\cite[Lemma 2.27]{FR-O}}]\label{lemma-abstrato}

Let $\kappa \in \mathbb{R}_{+}$ and $\mathrm{c}_0 > 0$. Let $\mathcal{S}$ be a non-negative function defined on the class of open convex subsets of $B_1,$ and suppose that $\mathcal{S}$ is subadditive. That is, if $\mathscr{A}, \mathscr{A}_1, \mathscr{A}_2, \ldots, \mathscr{A}_{\mathrm{N}_0}$ are open convex subsets of $B_1$ with $ \displaystyle \mathscr{A} \subset \bigcup_{j=1}^{N} \mathscr{A}_j$, then
\begin{align*}
    \mathcal{S}(\mathscr{A}) \leq \sum_{j=1}^{\mathrm{N}_0} \mathcal{S}(\mathscr{A}_j).
\end{align*}
Then, there exists a small constant $\delta = \delta(n,\kappa) > 0$ such that if
\begin{align*}
    \rho^{\kappa} \mathcal{S}\left(B_{\frac{\rho}{2}}\left(x_0\right)\right) \leq \delta \rho^{\kappa} \mathcal{S}\left(B_\rho(x_0)\right) + \mathrm{c}_0, \hspace{0.3cm} \forall \,\,\,B_\rho(x_0) \subset B_1,
\end{align*}
it follows that $ \displaystyle \mathcal{S}\left(B_{\frac{1}{2}}\right) \leq \mathrm{C}(n,\kappa)\mathrm{c}_0$.

\end{lemma}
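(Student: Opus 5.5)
Assume, after rescaling, $\mathcal{S}(B_1)<\infty$ (otherwise there is nothing to prove). The plan is a covering argument combined with a weighted supremum over balls, in the spirit of Simon's absorption trick. Define
\[
\mathrm{Q} \defeq \sup_{B_\rho(x_0)\subset B_1} \rho^{\kappa}\,\mathcal{S}\bigl(B_{\rho/2}(x_0)\bigr).
\]
This quantity is finite, since $\rho\le 1$ and $\mathcal{S}$ is monotone by subadditivity (take $\mathrm{N}_0=1$): $\mathcal{S}(B_{\rho/2}(x_0))\le \mathcal{S}(B_1)$. The aim is to prove $\mathrm{Q}\le \mathrm{C}(n,\kappa)\mathrm{c}_0$. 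Since $B_1=B_1(0)$ is admissible, this gives $\mathcal{S}(B_{1/2})\le \mathrm{Q}$ and hence the conclusion.

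The key step is to bound $\rho^\kappa \mathcal{S}(B_\rho(x_0))$ by a dimensional multiple of $\mathrm{Q}$. Fix $B_\rho(x_0)\subset B_1$. Cover $B_\rho(x_0)$ by $\mathrm{N}_0=\mathrm{N}_0(n)$ balls $B_{\rho/8}(y_j)$ with centers $y_j\in B_\rho(x_0)$. This is not quite enough, because the enlarged balls $B_{\rho/4}(y_j)$ need not lie in $B_1$ when $y_j$ is near $\partial B_\rho(x_0)$. To deal with this, intersect: the sets $\mathscr{A}_j=B_{\rho/8}(y_j)\cap B_\rho(x_0)$ are convex and open. More cleanly, I will replace each such piece by a ball $B_{r_j/2}(z_j)$ with $B_{r_j}(z_j)\subset B_\rho(x_0)$, using a Whitney-type decomposition. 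That, however, requires infinitely many balls near the boundary and then fails the finite-sum requirement.

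I therefore expect the main obstacle to be the boundary of $B_\rho(x_0)$. The standard fix, which I would try first, is to weight by distance to the boundary, following the proof of \cite[Lemma 2.27]{FR-O}. Instead of the plain supremum, use
\[
\mathrm{Q}' \defeq \sup_{B_\rho(x_0)\subset B_1} \rho^{\kappa}\,\mathcal{S}\bigl(B_{\rho/2}(x_0)\bigr),
\]
and, for a fixed ball $B_\rho(x_0)\subset B_1$, cover only $B_{\rho/2}(x_0)$ (not the full ball) by finitely many balls $B_{\rho/8}(y_j)$ with $y_j\in B_{\rho/2}(x_0)$. Then $B_{\rho/4}(y_j)\subset B_{3\rho/4}(x_0)\subset B_1$, so the hypothesis applies to each $B_{\rho/4}(y_j)$:
\[
(\rho/4)^\kappa \mathcal{S}(B_{\rho/8}(y_j))\le \delta(\rho/4)^\kappa\mathcal{S}(B_{\rho/4}(y_j))+\mathrm{c}_0 .
\]
Next, each $B_{\rho/4}(y_j)$ equals $B_{\rho'/2}(y_j)$ with $\rho'=\rho/2$, and $B_{\rho/2}(y_j)\subset B_1$. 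Hence, by definition of $\mathrm{Q}'$, $(\rho/2)^\kappa\mathcal{S}(B_{\rho/4}(y_j))\le \mathrm{Q}'$. Summing over $j$ with subadditivity gives
\[
\rho^\kappa \mathcal{S}(B_{\rho/2}(x_0)) \le \mathrm{N}_0 4^\kappa\bigl(\delta\, 2^{-\kappa}\cdot 4^{0}\,\mathrm{Q}'\cdot 2^{\kappa}/4^{\kappa}\cdot 4^{\kappa} + \mathrm{c}_0\bigr),
\]
which, after tracking constants, reads $\le \mathrm{N}_0 2^{\kappa}\delta\,\mathrm{Q}' + \mathrm{N}_0 4^\kappa \mathrm{c}_0$. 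Taking the supremum over admissible balls yields
\[
\mathrm{Q}'\le \mathrm{N}_0 2^\kappa\delta\,\mathrm{Q}'+\mathrm{N}_0 4^\kappa\mathrm{c}_0 .
\]
Choosing $\delta=\delta(n,\kappa)$ with $\mathrm{N}_0 2^\kappa\delta\le 1/2$ and absorbing (which uses the finiteness of $\mathrm{Q}'$) gives $\mathrm{Q}'\le 2\mathrm{N}_0 4^\kappa\mathrm{c}_0$. In particular, $\mathcal{S}(B_{1/2})\le \mathrm{C}(n,\kappa)\mathrm{c}_0$.

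Note that this route avoids the boundary issue entirely, because only the half-ball is ever covered. The point needing care is the finiteness of $\mathrm{Q}'$, which follows from monotonicity and $\mathcal{S}(B_1)<\infty$. If $\mathcal{S}(B_1)=\infty$ is allowed, I would first run the argument on $B_{1-\epsilon}$ (rescaled, where $\mathcal{S}$ is assumed finite in applications) and let $\epsilon\to0$.
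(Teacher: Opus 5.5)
Your argument is correct and is essentially the standard proof of \cite[Lemma 2.27]{FR-O}, which the paper invokes without reproducing: cover only $B_{\rho/2}(x_0)$ by $\mathrm{N}_0(n)$ balls $B_{\rho/8}(y_j)$ with $B_{\rho/2}(y_j)\subset B_\rho(x_0)$, apply the hypothesis to $B_{\rho/4}(y_j)$, bound $(\rho/4)^\kappa\mathcal{S}(B_{\rho/4}(y_j))\le 2^{-\kappa}\mathrm{Q}'$, and absorb once $\mathrm{N}_0 2^\kappa\delta\le 1/2$. The remaining points are cosmetic: $\mathrm{Q}$ and $\mathrm{Q}'$ are the same quantity with no distance weight, and your intermediate constant display is garbled although the final bound $\mathrm{N}_0 2^\kappa\delta\,\mathrm{Q}'+\mathrm{N}_0 4^\kappa\mathrm{c}_0$ is right; also, finiteness of $\mathrm{Q}'$ is automatic because $\mathcal{S}$ is real-valued and $\kappa>0$, and the remark that there is ``nothing to prove'' when $\mathcal{S}(B_1)=\infty$ is inaccurate (the conclusion would then be nontrivial and could fail), but unnecessary.
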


\medskip

Finally, recall that viscosity solutions satisfy the following stability property (see, for instance, \cite[Proposition 4.11]{CafCabre1995}) and  \cite[Theorem 3.8]{CCKS}.

\begin{proposition}\label{Stability-Prop}
Let $(F_k)_{k \in \mathbb{N}}$ be a sequence of fully nonlinear elliptic operators with ellipticity constants $0< \lambda \leq \Lambda$. Let $(u_k)_{k \in \mathbb{N}} \subset C^0(\Omega)$ be viscosity solutions to
\[
F_k(D^2 u_k, x) +\langle \mathfrak{B}_k(x),Du_k\rangle = f_k(x) \quad \text{in } \Omega,
\]
where $(f_k)_{k \in \mathbb{N}}$ is a sequence of continuous functions and $(\mathfrak{B}_k)_{k \in \mathbb{N}}$ is a sequence of continuous vector fields. Suppose further that $F_k \to F$ locally uniformly in $\text{Sym}(n)$, and that $u_k\to u$, $\mathfrak{B}_k\to \mathfrak{B}$ and $f_k \to f$ locally uniformly in $\Omega$. Then,
\[
F(D^2 u, x) +\langle \mathfrak{B}(x),Du\rangle = f(x) \quad \text{in } \Omega
\]
in the viscosity sense.
\end{proposition}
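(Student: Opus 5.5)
The plan is the classical test-function argument: show that the limit $u$ is a viscosity subsolution, and obtain the supersolution property by the symmetric argument. I interpret ``$F_k \to F$ locally uniformly'' as uniform convergence on compact subsets of $\text{Sym}(n)\times\Omega$.

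\textbf{Continuity of the data.} First I check that the limit objects are continuous, so that the limiting inequalities make sense. By $(\mathrm{A}1)$, every $F_k(\cdot,x)$ is Lipschitz in the matrix variable, with constant depending only on $n,\lambda,\Lambda$, uniformly in $k$ and $x$. By $(\mathrm{A}3)$, $|F_k(\mathrm{X},x)-F_k(\mathrm{X},y)|\le \tau(|x-y|)\|\mathrm{X}\|$. Hence the family $(F_k)$ is equicontinuous on compact subsets of $\text{Sym}(n)\times\Omega$. The limit $F$ is therefore continuous, inherits the same ellipticity constants, and the convergence is uniform on compacts. The functions $\mathfrak{B}$ and $f$ are continuous as locally uniform limits of continuous functions.

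\textbf{Subsolution property: setting up the touching points.} Fix $x_0\in\Omega$ and $\varphi\in C^2$ touching $u$ from above at $x_0$. Replacing $\varphi$ by $\varphi+|x-x_0|^4$, which leaves $D\varphi(x_0)$ and $D^2\varphi(x_0)$ unchanged, I may assume the touching is strict: $u-\varphi$ has a strict maximum at $x_0$ on a closed ball $\overline{B_r(x_0)}\subset\Omega$. Let $x_k$ be a maximum point of $u_k-\varphi$ on $\overline{B_r(x_0)}$. Since $u_k\to u$ uniformly there and the maximum at $x_0$ is strict, every limit point of $(x_k)$ is a maximum point of $u-\varphi$. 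Hence $x_k\to x_0$, and $x_k$ is interior for $k$ large. Then $\varphi+\big(u_k(x_k)-\varphi(x_k)\big)$ touches $u_k$ from above at $x_k$. By the definition of viscosity subsolution for the $k$-th equation,
\[
F_k(D^2\varphi(x_k),x_k)+\langle \mathfrak{B}_k(x_k),D\varphi(x_k)\rangle\ \ge\ f_k(x_k).
\]

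\textbf{Passing to the limit.} I pass to the limit term by term. For the operator term, write
\[
F_k(D^2\varphi(x_k),x_k)=\big[F_k-F\big](D^2\varphi(x_k),x_k)+F(D^2\varphi(x_k),x_k).
\]
The points $(D^2\varphi(x_k),x_k)$ lie in a fixed compact subset of $\text{Sym}(n)\times\Omega$, because $\varphi\in C^2$. So the bracket tends to $0$ by the uniform convergence on compacts. The remaining term tends to $F(D^2\varphi(x_0),x_0)$ by continuity of $F$ and of $D^2\varphi$. For the other terms, $\mathfrak{B}_k(x_k)\to\mathfrak{B}(x_0)$ and $f_k(x_k)\to f(x_0)$ by local uniform convergence combined with continuity of the limits, and $D\varphi(x_k)\to D\varphi(x_0)$. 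This yields
\[
F(D^2\varphi(x_0),x_0)+\langle\mathfrak{B}(x_0),D\varphi(x_0)\rangle\ \ge\ f(x_0).
\]
The supersolution inequality follows in the same way, using test functions touching from below and minimum points.

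\textbf{Main obstacle.} The only delicate point is the evaluation of $F_k$ at the moving points $(D^2\varphi(x_k),x_k)$. Convergence of $F_k$ in the matrix variable alone does not suffice for this. That is why I use the uniform-in-$k$ Lipschitz bound from $(\mathrm{A}1)$ and the uniform modulus $\tau$ from $(\mathrm{A}3)$: together they give joint equicontinuity, hence uniform convergence on compacts and continuity of the limit $F$. Once this is in place, the rest of the argument is routine.
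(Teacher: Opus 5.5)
Your proof is correct. It is exactly the classical $C$-viscosity stability argument: strict touching via $\varphi+|x-x_0|^4$, maximizers $x_k\to x_0$ of $u_k-\varphi$, and passage to the limit along that one fixed test function. The paper does not write this out; it takes it from the references it cites (Caffarelli--Cabr\'e and CCKS).

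The only caveat is that $(\mathrm{A}3)$ is not among the proposition's hypotheses. What your limit step actually needs is continuity of the limit $F$ in $(\mathrm{X},x)$. That is automatic if each $F_k$ is jointly continuous, and trivial in the paper's application: there the limit is the constant-coefficient operator $DF_0(\mathrm{A}_0,x_0)$, and the paper verifies convergence along each fixed test function, which your argument handles with only cosmetic changes.
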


\section{Schauder a priori estimates}

We now present the proofs of Theorems \ref{T1} and \ref{T2}, in that order.

\subsection{Interior estimates}

To prove Theorem \ref{T1}, we require an auxiliary lemma that establishes an interpolation inequality involving the Hölder seminorm of the Hessian $D^{2}u$ and the right-hand side of estimate \eqref{estinterior}, where the norm $\|u\|_{L^{\infty}(\mathrm{B}_{1})}$ is replaced by $\|D^{2}u\|_{L^{\infty}(\mathrm{B}_{1})}$.

\begin{lemma}\label{Interpolationinterior}
Suppose the assumptions of Theorem \ref{T1} hold. Then, given $\varepsilon > 0$, there exists a constant $\mathrm{C}_{\varepsilon} > 0$, depending only on $\varepsilon$, $n$, $\lambda$, $\Lambda$, $\alpha$, and $\omega$, such that

\begin{equation}\label{proofeqinterior1.0}
 [D^{2}u]_{\alpha,\mathrm{B}_{1/2}} \leq \varepsilon [D^{2}u]_{\alpha,\mathrm{B}_{1}} + \mathrm{C}_{\varepsilon} \left( \|u\|_{L^{\infty}(\mathrm{B}_{1})} + \|f\|_{C^{0,\alpha}(\mathrm{B}_{1})} + \|\mathfrak{B}\|_{C^{0,\alpha}(\mathrm{B}_{1};\mathbb{R}^{n})} \right).
\end{equation}

\end{lemma}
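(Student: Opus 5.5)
We argue by contradiction, following Simon's blow-up scheme as presented in \cite{FR-O}. Suppose that for some $\varepsilon>0$ the estimate \eqref{proofeqinterior1.0} fails for every constant. Then there are operators $F_k\in\mathscr{F}$, drifts $\mathfrak{B}_k$, sources $f_k$ and solutions $u_k\in C^{2,\alpha}(\mathrm{B}_1)$ (all with the same $n,\lambda,\Lambda,\alpha,\omega$ and the same bound $\mathrm{C}_0$) such that
\[
[D^2u_k]_{\alpha,\mathrm{B}_{1/2}}>\varepsilon[D^2u_k]_{\alpha,\mathrm{B}_1}+k\big(\|u_k\|_{L^\infty(\mathrm{B}_1)}+\|f_k\|_{C^{0,\alpha}}+\|\mathfrak{B}_k\|_{C^{0,\alpha}}\big).
\]
Subtracting a constant from $F_k$ is not allowed, so instead we note that the equation is unchanged if we replace $F_k(\mathrm{X},x)$ by $F_k(\mathrm{X},x)-F_k(\mathbf{O}_n,x)$ and $f_k$ by $f_k-F_k(\mathbf{O}_n,x)$. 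By (A3) the new source still has controlled $C^{0,\alpha}$ norm, so we may assume $F_k(\mathbf{O}_n,x)=0$. We then pick points $x_k,y_k\in\mathrm{B}_{1/2}$ with
\[
\frac{|D^2u_k(x_k)-D^2u_k(y_k)|}{|x_k-y_k|^\alpha}\geq \tfrac12[D^2u_k]_{\alpha,\mathrm{B}_{1/2}}=: \tfrac12\theta_k.
\]
Set $\rho_k=|x_k-y_k|$. Since $\|u_k\|_{L^\infty}\le \theta_k/k$, the interpolation inequality \eqref{eq:interpolation2} gives $\|D^2u_k\|_{L^\infty(\mathrm{B}_1)}\le C_\delta\theta_k/k+\delta\,[D^2u_k]_{\alpha,\mathrm{B}_1}\le (C_\delta/k+\delta/\varepsilon)\theta_k$. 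The ratio $|D^2u_k(x_k)-D^2u_k(y_k)|\,\rho_k^{-\alpha}$ is at most $2\|D^2u_k\|_\infty\rho_k^{-\alpha}$, so choosing $\delta$ small and then letting $k$ be large forces $\rho_k\to0$.

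Next we blow up. Define
\[
v_k(x)=\frac{u_k(x_k+\rho_kx)-p_k(x)}{\rho_k^{2+\alpha}\theta_k},
\]
where $p_k$ is the second-order Taylor polynomial of $u_k(x_k+\rho_k\,\cdot)$ at $0$. Then $v_k(0)=|Dv_k(0)|=|D^2v_k(0)|=0$, and $[D^2v_k]_{\alpha,\mathrm{B}_{1/(2\rho_k)}}\le [D^2u_k]_{\alpha,\mathrm{B}_1}/\theta_k\le 1/\varepsilon$. Moreover $|D^2v_k(0)-D^2v_k(\xi_k)|\ge 1/2$ with $\xi_k=(y_k-x_k)/\rho_k\in\partial \mathrm{B}_1$. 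These bounds give $|D^2v_k(x)|\le C|x|^\alpha$, and hence $|v_k(x)|\le C|x|^{2+\alpha}$ on balls $\mathrm{B}_{R}$ with $R<1/(2\rho_k)$. By Arzel\`a--Ascoli (Theorem~\ref{Theorem-Ascoli}) and a diagonal argument, $v_k\to v$ in $C^2_{loc}(\mathbb{R}^n)$ and $\xi_k\to\xi$. The limit satisfies $[D^2v]_{\alpha,\mathbb{R}^n}\le1/\varepsilon$, $|v(x)|\le C(1+|x|^{2+\alpha})$ and $|D^2v(0)-D^2v(\xi)|\ge1/2$.

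The key step, which I expect to be the main obstacle, is identifying the equation satisfied by $v$, because the nonlinearity is evaluated at $D^2u_k(x_k+\rho_kx)=M_k+\rho_k^\alpha\theta_kD^2v_k(x)$, with $M_k=D^2u_k(x_k)$ merely bounded by $\mathrm{C}_0$. Rescaling, $v_k$ solves
\[
\frac{F_k(M_k+\rho_k^\alpha\theta_kD^2v_k,\,x_k+\rho_kx)-F_k(M_k,x_k)}{\rho_k^\alpha\theta_k}+\langle\tilde{\mathfrak{B}}_k,Dv_k\rangle=\tilde f_k(x).
\]
Here $\tilde f_k$ collects three kinds of terms. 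The first is $f_k(x_k+\rho_kx)-f_k(x_k)$, which is $\le \|f_k\|_{C^\alpha}\rho_k^\alpha|x|^\alpha$ and hence $o(1)$ after dividing by $\rho_k^\alpha\theta_k$, because $\|f_k\|_{C^\alpha}\le\theta_k/k$. The second is the spatial variation $F_k(M_k,x_k+\rho_kx)-F_k(M_k,x_k)$, which is $O(\mathrm{C}_0\rho_k^\alpha|x|^\alpha)$. After division this is bounded by $C/\theta_k$, so we need $\theta_k\to\infty$. This holds if we use the bound $\|u_k\|_{C^{2,\alpha}}\le\mathrm{C}_0$ only to keep $M_k$ bounded and normalize the contradiction as above; if instead $\theta_k$ stays bounded, the failure inequality forces all the data norms to tend to $0$, and we handle the $x$-dependence through the modulus bound in (A3) by absorbing it as $\tau(\rho_k|x|)\|M_k\|/(\rho_k^\alpha\theta_k)$. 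The clean way out, and the one I would try first, is to add $\mathrm{C}_0$-dependence only through $\|M_k\|$ and note that the case $\theta_k\not\to\infty$ contradicts the inequality $\theta_k>k\cdot(\text{data})$ unless the data vanish, in which case the term vanishes too. The third kind is the drift term $\langle\mathfrak{B}_k(x_k+\rho_kx),Dp_k\rangle$, which we treat as part of the source; it is controlled by $\|\mathfrak{B}_k\|_{C^\alpha}\le\theta_k/k$ and the gradient bound. Finally, the nonlinear difference quotient equals $\int_0^1 D_{\mathrm{M}}F_k(M_k+s\rho_k^\alpha\theta_kD^2v_k,\cdot)\,ds\,(D^2v_k)$. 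By (A2) and $\rho_k^\alpha\theta_k|D^2v_k|\le C\rho_k^\alpha\theta_k R^\alpha\to0$ on $\mathrm{B}_R$ (since $\rho_k^\alpha\theta_k\lesssim\|D^2u_k\|_\infty\to$ controlled, or rather $\le 2\|D^2u_k\|_\infty\,$ which is $o(\theta_k)$ times $\rho_k^{\alpha}$), the coefficient matrices converge, along a subsequence, to a constant matrix $\mathfrak{A}_0\in\mathfrak{A}_{\lambda,\Lambda}$. By Proposition~\ref{Stability-Prop}, or directly by the $C^2_{loc}$ convergence, we obtain $\mathrm{tr}(\mathfrak{A}_0D^2v)=0$ in $\mathbb{R}^n$.

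To conclude, we change variables $z=\mathfrak{A}_0^{-1/2}x$, which turns $v$ into a harmonic function with growth $|x|^{2+\alpha}$. By Theorem~\ref{Theorem-Liouville's}, $v$ is a polynomial of degree at most $2$, so $D^2v$ is constant. This contradicts $|D^2v(0)-D^2v(\xi)|\ge1/2$ and proves \eqref{proofeqinterior1.0}.
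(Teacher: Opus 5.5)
Your overall scheme is the paper's: contradiction, a blow-up at $x_k$ normalized by a H\"older seminorm of $D^2u_k$, $C^2_{\mathrm{loc}}$ compactness, a constant-coefficient linear limit, and Liouville. However, the step you flag as the main obstacle is not resolved, and the reasoning you give there is wrong. After subtracting the equation at $x_k$ and dividing by $\rho_k^{\alpha}\theta_k$, the spatial-variation term is only bounded by
\[
\frac{|F_k(M_k,x_k+\rho_kx)-F_k(M_k,x_k)|}{\rho_k^{\alpha}\theta_k}\leq \frac{\tau(\rho_k|x|)\,\|M_k\|}{\rho_k^{\alpha}\theta_k}\leq \mathrm{c}\,|x|^{\alpha}\,\frac{\|M_k\|}{\theta_k},
\]
so you need $\|M_k\|/\theta_k\to 0$.

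You say this requires $\theta_k\to\infty$, which cannot happen, since $\theta_k\le[D^2u_k]_{\alpha,\mathrm{B}_1}\le\mathrm{C}_0$. Your fallback does not work either. The quantity $\|M_k\|=\|D^2u_k(x_k)\|$ is not among $\|u_k\|_{L^\infty}$, $\|f_k\|_{C^{0,\alpha}}$, $\|\mathfrak{B}_k\|_{C^{0,\alpha}}$, which are the quantities your failure inequality controls. Smallness of $f_k,\mathfrak{B}_k$ says nothing about the $x$-dependence of $F_k$, which is governed by $\tau$ and $\|M_k\|$ alone. Without $\|M_k\|/\theta_k\to0$, the blow-up limit may solve an equation with $x$-dependent coefficients and a nonzero right-hand side of size $|x|^{\alpha}$. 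In that case the Liouville step gives no contradiction.

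The missing ingredient is $\|D^2u_k\|_{L^\infty(\mathrm{B}_1)}=o(\theta_k)$. The paper builds this in from the start. It first reduces, via \eqref{eq:interpolation2}, to \eqref{proofeqinterior1}, where $\|D^2u\|_{L^\infty(\mathrm{B}_1)}$ replaces $\|u\|_{L^\infty(\mathrm{B}_1)}$. The failure inequality then gives $\|D^2u_j\|_{L^\infty(\mathrm{B}_1)}<[D^2u_j]_{\alpha,\mathrm{B}_1}/j$ directly. This is exactly what kills the contribution of $F_j(D^2u_j(x_j),x_j+\rho_jx)-F_j(D^2u_j(x_j),x_j)$ to $\overline{f}_j$, and the $x$-dependence of $\overline{F}_j$.

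In your setup the fix is already on the page. The bound $\|D^2u_k\|_{L^\infty(\mathrm{B}_1)}\le(\mathrm{C}_\delta/k+\delta/\varepsilon)\theta_k$, which you used to prove $\rho_k\to0$, holds for every $\delta>0$. Hence $\limsup_k\|M_k\|/\theta_k\le\delta/\varepsilon$ for all $\delta$, i.e.\ $\|M_k\|/\theta_k\to0$.

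The same smallness also justifies your unproved claim that the limiting coefficients are $x$-independent; (A2) alone does not give this, since it only controls the matrix variable. Set $G_k(\mathrm{X},x):=(\rho_k^{\alpha}\theta_k)^{-1}\bigl[F_k(M_k+\rho_k^{\alpha}\theta_k\mathrm{X},x_k+\rho_kx)-F_k(M_k,x_k+\rho_kx)\bigr]$. Then (A3) gives $|G_k(\mathrm{X},x)-G_k(\mathrm{X},0)|\le\mathrm{c}\,|x|^{\alpha}\bigl(2\|M_k\|/\theta_k+\rho_k^{\alpha}\|\mathrm{X}\|\bigr)\to0$.

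A minor point: the normalization $F_k(\mathbf{O}_n,x)=0$ is unnecessary. Your claim that the new source keeps a controlled $C^{0,\alpha}$ norm is also unjustified, since by (A3) $F_k(\mathbf{O}_n,\cdot)$ is a constant of arbitrary size. In any case, only differences of $f_k$ enter once you subtract the equation at $x_k$.
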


\begin{proof}

Observe that, in order to derive \eqref{proofeqinterior1.0}, it suffices to establish the following inequality: for $\varepsilon \ll 1$, one has
\begin{equation}\label{proofeqinterior1}
 [D^{2}u]_{\alpha,\mathrm{B}_{1/2}} \leq \varepsilon [D^{2}u]_{\alpha,\mathrm{B}_{1}} + \mathrm{C}_{\varepsilon} \left( \|D^{2}u\|_{L^{\infty}(\mathrm{B}_{1})} + \|f\|_{C^{0,\alpha}(\mathrm{B}_{1})} + \|\mathfrak{B}\|_{C^{0,\alpha}(\mathrm{B}_{1};\mathbb{R}^{n})} \right),
\end{equation}
for some constant $\mathrm{C}_{\varepsilon}>0$ depending on $\varepsilon>0$ and universal parameters.

\medskip
Indeed, by the interpolation inequality \ref{eq:interpolation2}, if $\delta = \varepsilon/\mathrm{C}_{\varepsilon} $, then
\begin{equation*}
    \|D^{2}u\|_{L^{\infty}(\mathrm{B}_{1})} \leq \delta [D^{2}u]_{\alpha,\mathrm{B}_{1}} + \mathrm{C}_{\delta} \|u\|_{L^{\infty}(\mathrm{B}_{1})}.
\end{equation*}

Therefore,
\begin{align*}
    [D^{2}u]_{\alpha,\mathrm{B}_{1/2}} &\leq \varepsilon [D^{2}u]_{\alpha,\mathrm{B}_{1}} + \mathrm{C}_{\varepsilon} \left( \|D^{2}u\|_{L^{\infty}(\mathrm{B}_{1})} + \|f\|_{C^{0,\alpha}(\mathrm{B}_{1})} + \|\mathfrak{B}\|_{C^{0,\alpha}(\mathrm{B}_{1};\mathbb{R}^{n})} \right) \\
    &\leq \varepsilon [D^{2}u]_{\alpha,\mathrm{B}_{1}} + \mathrm{C}_{\varepsilon} \left( \frac{\varepsilon}{\mathrm{C}_{\varepsilon}} [D^{2}u]_{\alpha,\mathrm{B}_{1}} + \mathrm{C}_{\delta} \|u\|_{L^{\infty}(\mathrm{B}_{1})} + \|f\|_{C^{0,\alpha}(\mathrm{B}_{1})} + \|\mathfrak{B}\|_{C^{0,\alpha}(\mathrm{B}_{1};\mathbb{R}^{n})} \right) \\
    &\leq 2\varepsilon [D^{2}u]_{\alpha,\mathrm{B}_{1}} + \max\left\{ \mathrm{C}_{\delta}\mathrm{C}_{\varepsilon},\hspace{0.1cm} 1 \right\} \left( \|u\|_{L^{\infty}(\mathrm{B}_{1})} + \|f\|_{C^{0,\alpha}(\mathrm{B}_{1})} + \|\mathfrak{B}\|_{C^{0,\alpha}(\mathrm{B}_{1};\mathbb{R}^{n})} \right)\\
    &\leq \widehat{\varepsilon} [D^{2}u]_{\alpha,\mathrm{B}_{1}} + \widehat{\mathrm{C}}_{\widehat{\varepsilon}}\left( \|u\|_{L^{\infty}(\mathrm{B}_{1})} + \|f\|_{C^{0,\alpha}(\mathrm{B}_{1})} + \|\mathfrak{B}\|_{C^{0,\alpha}(\mathrm{B}_{1};\mathbb{R}^{n})} \right).
\end{align*}

\bigskip

We establish inequality \eqref{proofeqinterior1} via a \textit{reductio ad absurdum} argument. Suppose, for contradiction, that there exist $\varepsilon_{0}>0$ and functions $u_{j} \in C^{2,\alpha}(B_{1})$, $f_{j} \in C^{0,\alpha}(B_{1})$, and $F_{j}\in \mathscr{F}$ satisfying
\begin{equation}\label{proofeqinterior2}
F_{j}(D^{2}u_{j}, x) + \langle \mathfrak{B}_{j}(x), Du_{j} \rangle = f_{j}(x) \quad \text{in } B_{1},
\end{equation}
with $\|u_{j}\|_{C^{2,\alpha}(B_{1})} \leq C_{0}$ for all $j\in\mathbb{N}$, yet
\begin{equation}\label{proofeqinterior3}
[D^{2}u_{j}]_{\alpha,B_{1/2}} > \varepsilon_0[D^{2}u_{j}]_{\alpha,B_{1}} + j(\|D^{2}u_{j}\|_{L^{\infty}(B_{1})} + \|f_{j}\|_{C^{0,\alpha}(B_{1})} + \|\mathfrak{B}_{j}\|_{C^{0,\alpha}(B_{1};\mathbb{R}^{n})}).   
\end{equation}

This inequality implies $[D^{2}u_{j}]_{\alpha,B_{1/2}} > 0$. Consequently, by definition of the supremum, for each $j\in\mathbb{N}$ there exist distinct points $x_{j},y_{j}\in B_{1/2}$ such that
\begin{equation}\label{proofeqinterior4}
\frac{1}{2}[D^{2}u_{j}]_{\alpha,B_{1/2}} \leq \frac{\|D^{2}u_{j}(x_{j}) - D^{2}u_{j}(y_{j})\|}{|x_{j} - y_{j}|^{\alpha}}.
\end{equation}

Defining $\rho_{j} = |x_{j} - y_{j}|$, we deduce from \eqref{proofeqinterior3} and \eqref{proofeqinterior4} that $0 < \rho_{j} < (4/j)^{1/\alpha} \to 0$.

Consider the rescaled functions $v_j$ defined on $B_{1/(2\rho_{j})}$ by
\begin{equation*}
v_{j}(x) = \frac{u_{j}(x_{j} + \rho_{j}x) - P_{j}(x)}{\rho_{j}^{2+\alpha}[D^{2}u_{j}]_{\alpha,B_{1}}},
\end{equation*}
where $P_{j}$ is the second-order Taylor polynomial of $u_j$ at $x_j$:
\begin{equation*}
P_{j}(x) = u_{j}(x_{j}) + \rho_{j}\langle Du_{j}(x_{j}), x \rangle + \frac{\rho_{j}^{2}}{2}x^{t}D^{2}u_{j}(x_j)x.
\end{equation*}

These functions satisfy
\[
v_{j}(0) = |Dv_{j}(0)| = \|D^{2}v_{j}(0)\| = 0 \quad \text{and} \quad [D^{2}v_{j}]_{\alpha,B_{1/(2\rho_{j})}} \leq 1 \quad \forall j\in\mathbb{N}.
\]

For $z_{j} = \frac{x_{j}-y_{j}}{\rho_{j}} \in \partial B_{1}$, we obtain from \eqref{proofeqinterior3} and \eqref{proofeqinterior4}:
\begin{equation}\label{proofeqinterior5}
\|D^{2}v_{j}(z_{j})\| = \frac{\|D^{2}u_{j}(y_{j}) - D^{2}u_{j}(x_{j})\|}{\theta_{j}} \geq \frac{1}{2}\frac{[D^{2}u_{j}]_{\alpha,B_{1/2}}}{[D^{2}u_{j}]_{\alpha,B_{1}}} > \frac{\varepsilon_{0}}{2},
\end{equation}
where $\theta_{j} = \rho_{j}^{\alpha}[D^{2}u_{j}]_{\alpha,B_{1}}$.

Since $u_{j}$ solves \eqref{proofeqinterior2}, the rescaled function $v_{j}$ satisfies
\[
\overline{F}_{j}(D^{2}v_{j},x) + \langle \overline{\mathfrak{B}}_{j}(x), Dv_{j} \rangle = \overline{f}_{j}(x) \quad \text{in } B_{\frac{1}{2\rho_{j}}},
\]
with the rescaled operator, vector field, and source term given by:
$$
\left\{
\begin{array}{rcl}
\overline{F}_{j}(X,x) &=& \theta_{j}^{-1}\left[F_{j}(\theta_{j}X + D^{2}u_{j}(x_{j}), x_{j} + \rho_{j}x) - F_{j}(D^{2}u_{j}(x_{j}), x_{j} + \rho_{j}x)\right], \\
\overline{\mathfrak{B}}_{j}(x) &= & \rho_{j}\mathfrak{B}_{j}(x_{j} + \rho_{j}x), \\
\overline{f}_{j}(x) &=& \theta_{j}^{-1}\left[(f_{j}(x_{j} + \rho_{j}x) - f_{j}(x_{j})) - (F_{j}(D^{2}u_{j}(x_{j}), x_{j} + \rho_{j}x) - F_{j}(D^{2}u_{j}(x_{j}), x_{j}))\right] \\
& + &\theta_{j}^{-1}\langle \mathfrak{B}_{j}(x_{j} + \rho_{j}x) - \mathfrak{B}_{j}(x_{j}), Du_{j}(x_{j}) \rangle \\
& - &\theta_{j}^{-1}\rho_{j}\langle \mathfrak{B}_{j}(x_{j} + \rho_{j}x), D^{2}u_{j}(x_{j})x \rangle.
\end{array}
\right.
$$

From the structural conditions \(\mathrm{(A1)}\)–\(\mathrm{(A3)}\), it follows—up to extraction of a subsequence—that \(F_j \to F_0\) and \(DF_j \to DF_0\) locally uniformly in \(\mathrm{Sym}(n)\), for some \(F_0 \in \mathscr{F}\). By the uniform boundedness of the sequence \((u_j)_{j \in \mathbb{N}}\) in the \(C^{2,\alpha}\)-norm and the compactness of \(\partial \mathrm{B}_1\), we obtain—again, up to a subsequence—that \(u_j \to u_0\) locally uniformly in \(\mathrm{B}_1\), and \(z_j \to z_0 \in \partial \mathrm{B}_1\), respectively. For the same reasons, \(v_j \to v_0\) in the \(C^2\)-norm on compact subsets of \(\mathbb{R}^n\), for some function \(v_0\) of class \(C^{2,\alpha}\). Based on these observations, we deduce that \(v_0\) satisfies
\begin{equation}\label{proofeqinterior6}
v_0(0) = |Dv_0(0)| = \|D^2v_0(0)\| = 0, \quad \text{and} \quad [D^2v_0]_{\alpha,\mathbb{R}^n} \leq 1,
\end{equation}
and
\begin{equation}\label{proofeqinterior7}
\|D^2v_0(z_0)\| > \frac{\varepsilon_0}{2}.
\end{equation}

On the other hand, for a fixed \(r > 0\), there exists \(j_0 \in \mathbb{N}\) such that \(\mathrm{B}_r \subset \mathrm{B}_{1/(2\rho_j)}\) for all \(j \geq j_0\) (since \(\rho_j \to 0\)). Hence, in \(\mathrm{B}_r\), for all \(j \geq j_0\), we have:
\begin{align*}
|\overline{f}_j(x)| &= \theta_j^{-1}|f_j(x_j + \rho_j x) - f_j(x_j)| + \mathrm{c}\theta_j^{-1}\|D^2u_j(x_j)\|\rho_j^\alpha |x| \\
&\quad + \theta_j^{-1}\|\mathfrak{B}_j(x_j + \rho_j x) - \mathfrak{B}_j(x_j)\|\cdot |Du_j(x_j)| \\
&\leq \mathrm{c}\theta_j^{-1} \rho_j^\alpha r^\alpha\left([f_j]_{\alpha,\mathrm{B}_1} + \|D^2u_j\|_{L^\infty(\mathrm{B}_1)} + [\mathfrak{B}_j]_{\alpha,\mathrm{B}_1} \|Du_j\|_{L^\infty(\mathrm{B}_1)}\right) \\
&\leq \mathrm{c}([D^2u_j]_{\alpha,\mathrm{B}_1})^{-1} r^\alpha \left( \frac{[D^2u_j]_{\alpha,\mathrm{B}_{1/2}}(1 + \|Du_j\|_{L^\infty(\mathrm{B}_1)})}{j} \right) \\
&\leq \mathrm{c} \frac{r^\alpha(1 + \|Du_j\|_{L^\infty(\mathrm{B}_1)})}{j},
\end{align*}
where \(\mathrm{c} > 0\) is a constant such that \(\tau(t) \leq \mathrm{c}t^\alpha\). This implies that \(\|\overline{f}_j\|_{L^\infty(\mathrm{B}_r)} \to 0\). Moreover, for all \(\mathrm{X} \in \mathrm{Sym}(n)\),
\begin{align*}
|\overline{F}_j(\mathrm{X},x) - \overline{F}_j(\mathrm{X},y)| &\leq \mathrm{C}' \theta_j^{-1} [\theta_j\|\mathrm{X}\| + 2\|D^2u_j\|_{L^\infty(\mathrm{B}_1)}] \rho_j^\alpha |x - y|^\alpha \\
&\leq \mathrm{C}' \left( \|\mathrm{X}\| \rho_j + \frac{2}{j} \right) |x - y|^\alpha \to 0.
\end{align*}
Thus, \(\overline{F}_j\) converges to an operator with constant coefficients—specifically, \(DF_0(\mathrm{A}_0,x_0)\), where \(x_0 = \lim_{j \to \infty} x_j\). Indeed, given any \(\phi \in C^2(\mathrm{B}_r)\), for sufficiently large \(j\), it follows that
\begin{eqnarray}
|\overline{F}_{j}(D^{2}\phi,x)+\langle \overline{\mathfrak{B}}_{j}(x),D\phi\rangle-\overline{f}_{j}-DF_{0}(\mathrm{A}_{0},x_{0})(D^{2}\phi)|&\leq& |\overline{f}_{j}(x)|+\rho_{j}\|\mathfrak{B}_{j}\|_{L^{\infty}(\mathrm{B}_{1};\mathbb{R}^{n})}|D\phi|\nonumber\\
&+&\Bigg|\int_{0}^{1}DF_{j}(t\theta_{j}D^{2}\phi+D^{2}u_{j}(x_{j}),x_{j}+\rho_{j}x)(D^{2}\phi) dt\nonumber\\
&-&DF_{j}(D^{2}u_{j}(x_{j}),x_{j}+\rho_{j}x)(D^{2}\phi)\Bigg|\nonumber\\
&+&|DF_{j}(D^{2}u_{j}(x_{j}),x_{j}+\rho_{j}x)(D^{2}\phi)\nonumber\\
&-&DF_{0}(\mathrm{A}_{0},x_{0})(D^{2}\phi)|\nonumber\\
&\leq&\frac{r^{\alpha}}{j}\rho_{j}\frac{[D^{2}u_{j}]_{\alpha,\mathrm{B}_{1}}}{j}|D\phi|+  \omega(\theta_{j}\|D^{2}\phi\|)\|D^{2}\phi\|\nonumber\\
&+&|DF_{j}(D^{2}u_{j}(x_{j}),x_{j}+\rho_{j}x)(D^{2}\phi)\nonumber\\
&-&DF_{0}(\mathrm{A}_{0},x_{0})(D^{2}\phi)|
\end{eqnarray}

Consequently, from the previous estimate and the uniform convergence of \((F_{j})\) and \((DF_{j})\), it follows that 
\[
\lim_{j\to\infty}\left\|\overline{F}_{j}(D^{2}\phi,x)+\langle \overline{\mathfrak{B}}_{j}(x),D\phi\rangle-\overline{f}_{j}-DF_{0}(\mathrm{A}_{0},x_{0})(D^{2}\phi)\right\|_{L^{\infty}(\mathrm{B}_{r})}=0.
\]
Hence, by the stability result (Proposition \ref{Stability-Prop}), \(v_{0}\) is a solution to 
\begin{equation}
\mathcal{L}_0 v_0 = DF_{0}(\mathrm{A}_{0},x_{0})(D^2v_{0})=0 \quad \text{in} \quad \mathbb{R}^{n}.
\end{equation}
Without loss of generality, up to a rotation, we may assume that \(v_{0}\) is a harmonic function in \(\mathbb{R}^{n}\). Consequently, \(v_{0}\) is smooth, and all its partial derivatives are harmonic functions as well. From the condition \([D^{2}v_{0}]_{\alpha,\mathbb{R}^{n}}\leq 1\), we deduce that \(D^{2}v_{0}\) exhibits sublinear growth at infinity. Therefore, by Liouville's Theorem \ref{Theorem-Liouville's}, \(D^{2}v_{0}\) must be constant. Since \(D^{2}v_{0}(0)=0\) (as noted in \eqref{proofeqinterior6}), we conclude that \(D^{2}v_{0} \equiv 0\), which contradicts \eqref{proofeqinterior7}. This completes the proof. 
\end{proof}

\subsection{Proof of Theorem \ref{T1}}\label{Sec-ProofThe01}

Finally, we are now in a position to present the proof of Theorem~\ref{T1}.

\begin{proof}
Define $u_\rho: \mathrm{B}_1 \to \mathbb{R}$ by $u_\rho(x) := u(\rho x + x_0)$, where $x_0 \in \mathrm{B}_{1/2}$ and $\rho \in (0, \frac{1}{2}]$. Then, $u_\rho$ satisfies
\begin{equation*}
    F_\rho(D^2 u_\rho, x) + \langle \mathfrak{B}_\rho(x), D u_\rho \rangle = f_\rho(x) \quad \text{in} \ \mathrm{B}_1,
\end{equation*}
where
\begin{equation*}
    F_\rho(\mathrm{M}, x) := \rho^2 F(\rho^{-2} \mathrm{M}, \rho x + x_0), \quad
    \mathfrak{B}_\rho(x) := \rho \mathfrak{B}(\rho x + x_0), \quad
    f_\rho(x) := \rho^2 f(\rho x + x_0),
\end{equation*}
verify the assumptions $(\mathbf{A}1)-(\mathbf{A}3)$, and the corresponding H\"{o}lder seminorms satisfy
\begin{equation*}
    \left[ D^2 u_\rho \right]_{\alpha, \mathrm{B}_{1/2}} = \rho^{2+\alpha} \left[ D^2 u \right]_{\alpha,  \mathrm{B}_{\rho / 2}(x_0)}, \quad 
    \left[ \mathfrak{B}_\rho \right]_{\alpha, \mathrm{B}_1} = \rho^{1+\alpha} \left[ \mathfrak{B} \right]_{\alpha, \mathrm{B}_\rho(x_0)}, \quad \text{and} \quad 
    \left[ f_\rho \right]_{\alpha, \mathrm{B}_1} = \rho^{2+\alpha} \left[ f \right]_{\alpha, \mathrm{B}_\rho(x_0)}.
\end{equation*}

Now, applying the estimate from Lemma~\ref{Interpolationinterior} with a fixed $\delta > 0$, we obtain
\begin{align*}
\rho^{2+\alpha} \left[ D^2 u \right]_{\alpha, \mathrm{B}_{\rho/2}(x_0)} &= \left[ D^2 u_\rho \right]_{\alpha, \mathrm{B}_{1/2}} \\
&\leq \delta \left[ D^2 u_\rho \right]_{\alpha, \mathrm{B}_1} + \mathrm{C}_\delta \left( \|  u_\rho \|_{L^\infty(\mathrm{B}_1)} + \| f_\rho \|_{C^{0,\alpha}(\mathrm{B}_1)} + \| \mathfrak{B}_\rho \|_{C^{0,\alpha}(\mathrm{B}_1; \mathbb{R}^n)} \right) \\
&= \delta \rho^{2+\alpha} \left[ D^2 u \right]_{\alpha, \mathrm{B}_\rho(x_0)} + \mathrm{C}_\delta \Big(  \| u \|_{L^\infty(\mathrm{B}_\rho(x_0))} + \rho^2 \| f \|_{L^\infty(\mathrm{B}_\rho(x_0))} \\
&\quad + \rho \| \mathfrak{B} \|_{L^\infty(\mathrm{B}_\rho(x_0); \mathbb{R}^n)} + \rho^{1+\alpha} \left[ \mathfrak{B} \right]_{\alpha, \mathrm{B}_\rho(x_0)} + \rho^{2+\alpha} \left[ f \right]_{\alpha, \mathrm{B}_\rho(x_0)} \Big) \\
&\leq \delta \rho^{2+\alpha} \left[ D^2 u \right]_{\alpha, \mathrm{B}_\rho(x_0)} + \mathrm{C}_\delta \left( \|  u \|_{L^\infty(\mathrm{B}_1)} + \| f \|_{C^{0,\alpha}(\mathrm{B}_1)} + \| \mathfrak{B} \|_{C^{0,\alpha}(\mathrm{B}_1; \mathbb{R}^n)} \right).
\end{align*}

Therefore,
\begin{equation*}
\rho^{2+\alpha} \left[ D^2 u \right]_{\alpha, \mathrm{B}_{\rho/2}(x_0)} \leq \delta \rho^{2+\alpha} \left[ D^2 u \right]_{\alpha, \mathrm{B}_\rho(x_0)} + \mathrm{C}_\delta \left( \| u \|_{L^\infty(\mathrm{B}_1)} + \| f \|_{C^{0,\alpha}(\mathrm{B}_1)} + \| \mathfrak{B} \|_{C^{0,\alpha}(\mathrm{B}_1; \mathbb{R}^n)} \right).
\end{equation*}

Finally, invoking Lemma~\ref{lemma-abstrato} with
\[
\kappa := \alpha + 2, \quad
\mathrm{c}_0 := \mathrm{C}_\delta \left( \|  u \|_{L^\infty(\mathrm{B}_1)} + \| f \|_{C^{0,\alpha}(\mathrm{B}_1)} + \| \mathfrak{B} \|_{C^{0,\alpha}(\mathrm{B}_1; \mathbb{R}^n)} \right),
\]
and defining the functional $\mathcal{S}(\mathscr{A}) := \left[ D^2 u \right]_{\alpha, \mathscr{A}}$, which is subadditive on convex open subsets, we conclude from the preceding inequality that
\begin{equation}
    \left[ D^2 u \right]_{\alpha, \mathrm{B}_{1/2}} \leq \mathrm{C}(n,\kappa) \left( \| u \|_{L^\infty(\mathrm{B}_1)} + \| f \|_{C^{0,\alpha}(\mathrm{B}_1)} + \| \mathfrak{B} \|_{C^{0,\alpha}(\mathrm{B}_1; \mathbb{R}^n)} \right).
\end{equation}

In conclusion, invoking the interpolation inequality \ref{eq:interpolation2}, we obtain
\begin{align*}
    \| u \|_{\mathrm{C}^{2, \alpha}(\mathrm{B}_{1/2})} &= \| u \|_{\mathrm{C}^{2} (\mathrm{B}_{1/2})} + [D^2 u]_{C^{0,\alpha}(\mathrm{B}_{1/2})} \\
    &\leq \mathrm{C}_\varepsilon \|u\|_{L^\infty(\mathrm{B}_{1/2})} + \varepsilon [D^2 u]_{C^{0,\alpha}(\mathrm{B}_{1/2})} + [D^2 u]_{C^{0,\alpha}(\mathrm{B}_{1/2})} \\
    &\leq \mathrm{C}_\varepsilon \|u\|_{L^\infty(\mathrm{B}_{1})} + (\varepsilon+1) \left( \mathrm{C}(n,\kappa) \left( \| u \|_{L^\infty(\mathrm{B}_1)} + \| f \|_{C^{0,\alpha}(\mathrm{B}_1)} + \| \mathfrak{B} \|_{C^{0,\alpha}(\mathrm{B}_1; \mathbb{R}^n)} \right) \right) \\
    &\leq \widehat{\mathrm{C}} \left( \| u \|_{L^\infty(\mathrm{B}_1)} + \| f \|_{C^{0,\alpha}(\mathrm{B}_1)} + \| \mathfrak{B} \|_{C^{0,\alpha}(\mathrm{B}_1; \mathbb{R}^n)} \right),
\end{align*}
where 
\[
\widehat{\mathrm{C}} = \max \left\{ \mathrm{C}_{\varepsilon} + (\varepsilon+1)\mathrm{C}_{\delta},\hspace{0.1cm} (\varepsilon+1)\mathrm{C}_{\delta} \right\} = \mathrm{C}_{\varepsilon} + (\varepsilon+1)\mathrm{C}_{\delta}.
\]
\medskip

\noindent This concludes the proof.
\end{proof}

\subsection{Boundary estimates}

Finally, we derive the boundary estimates for the problem under the Dirichlet condition. Our strategy is to prove an auxiliary inequality for solutions to the corresponding Dirichlet problem, and then apply interpolation arguments to establish a proof of Theorem \ref{T2}. For the next result, we assume the compatibility condition
\[
F(D^2g ,x)+\langle \mathfrak{B}(x),Dg\rangle = f(x) \quad \text{in} \,\, \mathrm{T}_{1}
\]
as \( u \in C^{2, \alpha}(B_1^{+}\cup \mathrm{T}_1) \) and \( u = g \) on \( \mathrm{T}_1 \).

\begin{lemma}\label{Interpolationboundary}
Suppose the hypotheses of Theorem \ref{T2} are satisfied. Given \(\varepsilon>0\), there exists a positive constant \(\mathrm{C}^{\ast}_{\varepsilon} = \mathrm{C}^{\ast}_{\varepsilon}(\varepsilon, n, \lambda, \Lambda, \alpha, \omega)\) such that
\begin{equation*}
[D^{2}u]_{\alpha,\mathrm{B}^{+}_{1/2}} \leq \varepsilon [D^{2}u]_{\alpha,\mathrm{B}^{+}_{1}} + \mathrm{C}^{\ast}_{\varepsilon} \left( \|D^{2}u\|_{L^{\infty}(\mathrm{B}_{1}^{+})} + \|f\|_{C^{0,\alpha}(\mathrm{B}_{1}^{+})} + \|\mathfrak{B}\|_{C^{0,\alpha}(\mathrm{B}_{1}^{+};\mathbb{R}^{n})} + \|g\|_{C^{2,\alpha}(\mathrm{T}_{1})} \right).
\end{equation*}
\end{lemma}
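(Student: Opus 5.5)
We argue by contradiction, following the blow-up scheme of Lemma~\ref{Interpolationinterior}. Suppose there are $\varepsilon_0>0$, operators $F_j\in\mathscr{F}$, data $f_j,\mathfrak{B}_j,g_j$, and solutions $u_j$ with $\|u_j\|_{C^{2,\alpha}(\overline{\mathrm{B}_1^+})}\le \mathrm{C}_0^\ast$ such that
\[
[D^2u_j]_{\alpha,\mathrm{B}^+_{1/2}} > \varepsilon_0[D^2u_j]_{\alpha,\mathrm{B}^+_1} + j\big(\|D^2u_j\|_{L^\infty} + \|f_j\|_{C^{0,\alpha}}+\|\mathfrak{B}_j\|_{C^{0,\alpha}}+\|g_j\|_{C^{2,\alpha}(\mathrm{T}_1)}\big).
\]
Choose $x_j,y_j\in \mathrm{B}^+_{1/2}$ that realize half of the seminorm on $\mathrm{B}^+_{1/2}$, and set $\rho_j=|x_j-y_j|$. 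As before, $\rho_j\to0$. Let $d_j=\mathrm{dist}(x_j,\mathrm{T}_1)$ (the $n$-th coordinate, up to relabeling $x_j\leftrightarrow y_j$ so that $x_j$ is the point closer to the boundary). We then split into two cases according to whether $d_j/\rho_j\to\infty$ along a subsequence, or $d_j/\rho_j$ stays bounded.

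\emph{Case 1: $d_j/\rho_j\to\infty$.} Blow up exactly as in the interior lemma,
\[
v_j(x)=\frac{u_j(x_j+\rho_jx)-P_j(x)}{\rho_j^{2+\alpha}[D^2u_j]_{\alpha,\mathrm{B}_1^+}},
\]
which is defined on balls $\mathrm{B}_{R_j}$ with $R_j=\min\{d_j,1/2\}/\rho_j\to\infty$. The same estimates on $\overline F_j,\overline{\mathfrak B}_j,\overline f_j$ carry over verbatim, since they only use the quantities controlled by the contradiction hypothesis. They produce an entire solution $v_0$ of $DF_0(\mathrm A_0,x_0)(D^2v_0)=0$ with $v_0(0)=Dv_0(0)=D^2v_0(0)=0$, $[D^2v_0]_{\alpha,\mathbb R^n}\le1$ and $\|D^2v_0(z_0)\|\ge\varepsilon_0/2$. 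Theorem~\ref{Theorem-Liouville's} then gives the contradiction.

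\emph{Case 2: $d_j\le M\rho_j$.} Center instead at the boundary projection $\bar x_j=(x_j',0)$ and subtract the Taylor polynomial $P_j$ of $u_j$ at $\bar x_j$. The rescaled $v_j$ then lives on $\mathrm{B}^+_{1/(4\rho_j)}$, vanishes to second order at $0$, and has $[D^2v_j]_{\alpha}\le1$. The points $(x_j-\bar x_j)/\rho_j$ and $(y_j-\bar x_j)/\rho_j$ stay in $\overline{\mathrm B_{M+1}}$, so the lower bound $\|D^2v_j(\tilde z_j)-D^2v_j(\tilde w_j)\|\ge\varepsilon_0/2$ passes to the limit at two points a unit distance apart. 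The equation part is handled exactly as in the interior case, giving $\mathcal L_0v_0=0$ in $\{x_n>0\}$ via Proposition~\ref{Stability-Prop}.

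The new ingredient is the boundary datum. On $\{x_n=0\}$ we have
\[
v_j=\frac{g_j(\bar x_j+\rho_jx)-\text{(Taylor poly of }g_j)-\text{(normal-derivative terms)}}{\theta_j\rho_j^2}.
\]
The tangential part is bounded by $\rho_j^{2+\alpha}[D^2g_j]_\alpha|x|^{2+\alpha}/(\rho_j^{2+\alpha}[D^2u_j]_\alpha)\le \mathrm c|x|^{2+\alpha}/(j\varepsilon_0)\to0$. The terms of $P_j$ involving $\partial_nu_j(\bar x_j)$ and $\partial_{in}u_j$ vanish on $\{x_n=0\}$ once restricted to tangential $x$; that is, $P_j|_{x_n=0}$ is exactly the tangential Taylor polynomial of $g_j$. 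Hence $v_0=0$ on $\{x_n=0\}$. I expect this bookkeeping, together with checking that the compatibility condition keeps $\overline f_j\to0$ near the boundary, to be the main technical point.

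Finally, a Liouville theorem in the half-space is needed. After the linear change of variables $z=\mathfrak A_0^{1/2}x$, which maps the half-space to another half-space (rotate afterwards), $v_0$ is harmonic in a half-space, vanishes on its boundary, and satisfies $|v_0(x)|\le \mathrm C|x|^{2+\alpha}$. By odd reflection, $v_0$ extends to an entire harmonic function with the same growth. Theorem~\ref{Theorem-Liouville's} then forces $v_0$ to be a polynomial of degree at most $2$, so $D^2v_0$ is constant. This contradicts $\|D^2v_0(\tilde z_0)-D^2v_0(\tilde w_0)\|\ge\varepsilon_0/2$, and the lemma follows.
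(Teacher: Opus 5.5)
Your proposal is correct and follows the paper's own proof. It uses the same contradiction and blow-up scheme, the same dichotomy on whether $\mathrm{dist}(x_j,\mathrm{T}_1)/\rho_j$ diverges or stays bounded, recentering at a nearby point of $\mathrm{T}_{1/2}$ in the bounded case so the rescaled boundary data vanish in the limit, and odd reflection plus Liouville to conclude. Your bounded-case bookkeeping is slightly more careful than the paper's: you keep the Hessian difference at two limit points a unit distance apart (the paper wrongly claims both rescaled points tend to one $w_0$), you note that $P_j$ restricted to $\{x_n=0\}$ is the tangential Taylor polynomial of $g_j$, and you use a genuine linear change of variables rather than only a rotation.
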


\begin{proof}
As in Lemma \ref{Interpolationinterior}, we establish this result by employing a \textit{reductio ad absurdum} argument. Thus, there exist \(\varepsilon_0 > 0\) and sequences of functions \(u_j \in C^{2,\alpha}(\mathrm{B}^{+}_{1} \cup \mathrm{T}_{1})\), \(f_j \in C^{0,\alpha}(\mathrm{B}^{+}_{1} \cup \mathrm{T}_{1})\), \(\mathfrak{B}_j \in C^{0,\alpha}(\mathrm{B}^{+}_{1} \cup \mathrm{T}_{1};\mathbb{R}^{n})\), \(F_{j} \in \mathscr{F}\), and \(g_j \in C^{2,\alpha}(\mathrm{T}_{1})\) such that \(u_{j}\) is a viscosity solution of
\[
\left\{
\begin{array}{rcl}
F_{j}(D^2u_{j},x)+\langle \mathfrak{B}_{j}(x),Du_{j}\rangle &=& f_{j}(x) \quad \text{in} \,\, \mathrm{B}^{+}_{1}\cup\mathrm{T}_{1},\\
u_{j}(x) &=& g_{j}(x)  \quad \text{on} \,\, \mathrm{T}_{1},
\end{array}
\right.
\]
with \(\|u_{j}\|_{C^{2,\alpha}(\mathrm{B}^{+}_{1}\cup\mathrm{T}_{1})} \leq \mathrm{C}_{0}\). Yet,
\begin{equation}\label{proofboundary3}
[D^{2}u_{j}]_{\alpha,\mathrm{B}^{+}_{1/2}} > \varepsilon_{0} [D^{2}u_{j}]_{\alpha,\mathrm{B}^{+}_{1}} + j \left( \|D^{2}u_{j}\|_{L^{\infty}(\mathrm{B}_{1}^{+})} + \|f_{j}\|_{C^{0,\alpha}(\mathrm{B}_{1}^{+})} + \|\mathfrak{B}_{j}\|_{C^{0,\alpha}(\mathrm{B}_{1}^{+};\mathbb{R}^{n})} + \|g_{j}\|_{C^{2,\alpha}(\mathrm{T}_{1})} \right).
\end{equation}

It follows that there exist \(x_{j}, y_{j} \in \mathrm{B}^{+}_{1/2}\), with \(x_{j} \neq y_{j}\), such that
\begin{equation}\label{proofboundary4}
\frac{1}{2} [D^{2}u_{j}]_{\alpha,\mathrm{B}^{+}_{1/2}} \leq \frac{\|D^{2}u_{j}(x_{j}) - D^{2}u_{j}(y_{j})\|}{|x_{j} - y_{j}|^{\alpha}}.
\end{equation}

Defining \(\rho_{j} := |x_{j} - y_{j}|\), it follows from \eqref{proofboundary4} that \(\rho_{j} \to 0\) as \(j \to \infty\).

Next, set \(r_{j} := \frac{\dist(x_{j}, \mathrm{T}_{1})}{\rho_{j}}\). Up to passing to a subsequence, there are two possible scenarios for analysis:
\begin{itemize}
\item[(I)] \(r_{j} \to \infty\) as \(j \to \infty\);
\item[(II)] \(\theta := \sup_{j \in \mathbb{N}} r_{j} < \infty\).
\end{itemize}

In the first case, we consider the rescaled function
\[
v_{j}(x) := \frac{u_{j}(x_{j} + \rho_{j} x) - P_{j}(x)}{\rho_{j}^{2 + \alpha} [D^{2}u_{j}]_{\alpha,\mathrm{B}_{1}^{+}}}, \quad x \in \mathrm{B}_{r_{j}},
\]
where
\[
P_{j}(x) := u_{j}(x_{j}) + \rho_{j} \langle Du_{j}(x_{j}), x \rangle + \frac{\rho_{j}^{2}}{2} x^{\top} D^{2}u_{j}(x_j) x.
\]

The function \(v_j\) is well-defined. Proceeding analogously to the interior case (see Lemma \ref{Interpolationinterior}), we arrive at a contradiction employing a blow-up argument combined with the Liouville-type Theorem \ref{Theorem-Liouville's}.

On the other hand, in case (II), we observe that \(0<\frac{\dist(x_{j},\mathrm{T}_{1})}{2}< \theta \rho_{j}\), so there exists \(t_{j}\in\mathrm{T}_{\frac{1}{2}}\) such that \(|x_{j}-t_{j}|\leq 2\theta\rho_{j}\). Define the auxiliary function
\begin{eqnarray*}
v_{j}(x)=\frac{u_{j}(t_{j}+\rho_{j}x)-\bar{P}_{j}(x)}{\rho_{j}^{2+\alpha}[D^{2}u_{j}]_{\alpha,\mathrm{B}_{1}^{+}}}, \quad x\in\mathrm{B}_{\frac{1}{2\rho_{j}}}\cup \mathrm{T}_{\frac{1}{2\rho_{j}}},
\end{eqnarray*}
where
\begin{eqnarray*}
\bar{P}_{j}(x)=u_{j}(t_{j})+\rho_{j}\langle Du_{j}(t_{j}),x\rangle+\frac{\rho_{j}^{2}}{2}x^{t}D^{2}u_{j}(t_j)x.
\end{eqnarray*}
Analogously to the interior case, \(v_{j}(0)=|Dv_{j}(0)|=\|D^{2}v_{j}(0)\|=0\) and \([D^{2}v_{j}]_{\alpha,\mathrm{B}^{+}_{1}}\leq 1\) for all \(j\in\mathbb{N}\). For sufficiently large \(j\), the points \(\bar{x}_{j}=\frac{x_{j}-t_{j}}{\rho_{j}}\) and \(\bar{y}_{j}=\frac{y_{j}-t_{j}}{\rho_{j}}\) lie in a compact subset of \(\overline{\mathbb{R}^{n}_{+}}\). Up to a subsequence, we may assume that \(\bar{x}_{j}, \bar{y}_{j}\to w_{0}\) as \(j\to \infty\), since \(x_{j}-y_{j}\to 0\). In this case, inequality \eqref{proofboundary4} implies
\begin{eqnarray}\label{proofboundary5}
\frac{\varepsilon_{0}}{2}<\frac{\|D^{2}v_{j}(x_{j})-D^{2}v_{j}(y_{j})\|}{\rho_{j}^{\alpha}[D^{2}u_{j}]_{\alpha,\mathrm{B}^{+}_{1}}}&\leq&\frac{\|D^{2}v_{j}(x_{j})-D^{2}v_{j}(t_{j})\|}{\rho_{j}^{\alpha}[D^{2}u_{j}]_{\alpha,\mathrm{B}^{+}_{1}}}+\frac{\|D^{2}v_{j}(y_{j})-D^{2}v_{j}(t_{j})\|}{\rho_{j}^{\alpha}[D^{2}u_{j}]_{\alpha,\mathrm{B}^{+}_{1}}} \nonumber\\
&=&\|D^{2}v_{j}(\bar{x}_{j})\|+\|D^{2}v_{j}(\bar{y}_{j})\|.
\end{eqnarray}
Furthermore, it can be verified that \(v_{j}\) satisfies
\[
\left\{
\begin{array}{rcl}
\overline{F}_{j}(D^2v_{j},x) +\langle\overline{\mathfrak{B}}_{j}(x),Dv_{j}\rangle&=& \overline{f}_{j}(x) \quad \text{in} \ \mathrm{B}^{+}_{\frac{1}{2\rho_{j}}}\cup\mathrm{T}_{\frac{1}{2\rho_{j}}},\\
v_{j}(x)&=& \overline{g}_{j}(x)  \quad \text{on} \ \mathrm{T}_{\frac{1}{2\rho_{j}}}, 
\end{array}
\right.
\]
where
\[
\left\{
\begin{array}{rcl}
\overline{F}_{j}(\mathrm{X}) & \defeq & \theta_{j}^{-1}[F(\theta_{j} \mathrm{X}+D^{2}g_{j}(t_{j}),t_{j}+\rho_{j}x)-F_{j}(D^{2}g_{j}(t_{j}),x_{j}+\rho_{j}x)], \\
\overline{f}_j(x) & \defeq & \theta_{j}^{-1}[ f_{j}(t_{j}+\rho_{j}x)-f_{j}(t_{j})] - \theta_{j}^{-1}[F_{j}(D^{2}u_{j}(t_{j}),t_{j}+\rho_{j}x)-F_{j}(D^{2}u_{j}(t_{j}),t_{j})] \\
&+& \theta_{j}^{-1}\langle \mathfrak{B}_{j}(t_{j}+\rho_{j}x)-\mathfrak{B}_{j}(t_{j}),Du_{j}(t_{j})\rangle - \theta_{j}^{-1}\rho_{j}\langle\mathfrak{B}_{j}(t_{j}+\rho_{j}x),D^{2}u_{j}(t_{j})x\rangle,\\
\overline{\mathfrak{B}}_{j}(x)&\defeq&\rho_{j}\mathfrak{B}_{j}(x_{j}+\rho_{j}x),\\
\overline{g}_{j}(x) & \defeq & \dfrac{g_{j}(t_{j}+\rho_{j}x)-g_{j}(t_{j})-\rho_{j}\langle Dg_{j}(t_{j}),x\rangle-\frac{\rho_{j}^{2}}{2}x^{t}D^{2}g_{j}(t_{j})x}{\rho_{j}^{2+\alpha}[D^{2}u_{j}]_{\alpha,\mathrm{B}^{+}_{1}}},
\end{array}
\right.
\]
and \(\theta_{j}=\rho_{j}^{\alpha}[D^{2}u_{j}]_{\alpha,\mathrm{B}^{+}_{1}}\). As in the interior case, we conclude that \(\overline{f}_{j}\to 0\) locally uniformly in balls \(\mathrm{B}^{+}_{r} \subset \mathbb{R}^{n}_{+}\).

Moreover, since \(\mathrm{T}_{r}\subset \{x_{n}=0\}\), we have for \(j \gg 1\) that in \(\mathrm{T}_{r}\)
\begin{eqnarray}
|\overline{g}_{j}(x)| \leq \frac{[D^{2}g_{j}]_{\alpha,\mathrm{T}_{1}}}{2[D^{2}u_{j}]_{\alpha,\mathrm{B}^{+}_{1}}}r^{2+\alpha} \leq \frac{r^{2+\alpha}}{2j} \to 0 \quad \text{as} \quad j \to \infty,
\end{eqnarray}
where, in the last inequality, we used estimate \eqref{proofboundary3}. Thus, similarly to the interior case, up to a subsequence, we have \(v_{j} \to v_{0} \in C^{2,\alpha}(\overline{\mathbb{R}^{n}_{+}})\) locally uniformly in \(\mathbb{R}^{n}_{+}\), \(F_{j} \to F_{0}\) in \(\operatorname{Sym}(n)\), \(\overline{f}_{j} \to 0\), and \(\overline{g}_{j} \to 0\). By the stability result (Proposition \ref{Stability-Prop}), it follows that \(v_{0}\) solves, in particular,
\[
\left\{
\begin{array}{rcl}
DF_{0}(\mathrm{A}_{0},t_{0})(D^2v_{0}) &=& 0 \quad \text{in} \quad \mathbb{R}^{n}_{+}, \\
v_{0}(x) &=& 0 \quad \text{on} \quad \partial\mathbb{R}^{n}_{+} = \{x_{n}=0\},
\end{array}
\right.
\]
for some matrix \(\mathrm{A}_{0} \in \operatorname{Sym}(n)\) and some point \(t_{0} \in \mathrm{T}_{1/2}\). Moreover, by the previous observations on the sequence \((v_{j})\), we have
\[
v_{0}(0) = |Dv_{0}(0)| = \|D^{2}v_{0}(0)\| = 0, \quad \text{and} \quad [D^{2}v_{0}]_{\alpha,\mathbb{R}^{n}_{+}} \leq 1.
\]
Furthermore, from \eqref{proofboundary5}, we conclude that
\begin{eqnarray}\label{proofboundary6}
\|D^{2}v_{0}(w_{0})\| \geq \frac{\varepsilon_{0}}{4}.
\end{eqnarray}
As in the final step of the proof of Lemma \ref{Interpolationinterior}, we observe that, up to a rotation, \(v_{0}\) is harmonic in the half-space \(\mathbb{R}^{n}_{+}\). Applying the Schwarz Reflection Principle (see \cite[4.12 Theorem]{ABR-Book-2001}), we can extend \(v_{0}\) to the entire space \(\mathbb{R}^{n}\) via the function \(\bar{v}_{0}\) defined by
\[
\bar{v}_{0}(x',x_{n}) =
\begin{cases}
v_{0}(x',x_{n}), & \text{if } x_{n} \geq 0, \\    
-v_{0}(x',-x_{n}), & \text{if } x_{n} < 0,
\end{cases}
\]
which is harmonic in \(\mathbb{R}^{n}\) and satisfies \([D^{2} \bar{v}_{0}]_{\alpha,\mathbb{R}^{n}} \leq 1\). Hence, by Liouville's Theorem \ref{Theorem-Liouville's}, we deduce that \(D^{2} \bar{v}_{0} = 0\), and in particular, \(D^{2}v_{0} = 0\), which contradicts \eqref{proofboundary6}. This completes the proof. 
\end{proof}

In conclusion, the proof of Theorem~\ref{T2} proceeds along the same lines as that of Theorem~\ref{T1} (see Subsection \ref{Sec-ProofThe01}). Hence, to avoid unnecessary repetition, we omit the details.

\subsection*{Final comments and related results}

Motivated by the study of the fully nonlinear Alt–Phillips equation, Wu and Yu in \cite{WH2022} assumed, in addition to the ellipticity condition $(\mathrm{A1})$, that the function \( F: \text{Sym}(n) \to \mathbb{R} \) satisfies the following conditions:
\begin{equation}\label{Condi-WuYu}
\left\{
\begin{array}{l}
F \text{ is convex,} \\
F(\mathbf{O}_n) = 0, \\
\text{the trace operator is a subdifferential of } F \text{ at } \mathbf{O}_n.
\end{array}
\right.
\end{equation}

It is important to note that the convexity assumption ensures the existence of subdifferentials. Given a matrix \( \mathfrak{A} \in \text{Sym}(n) \), a subdifferential of \( F \) at \( \mathfrak{A} \) is a linear operator \( \mathcal{S}_{\mathfrak{A}} : \text{Sym}(n) \to \mathbb{R} \) such that
\[
\mathcal{S}_{\mathfrak{A}}(\mathrm{X}) \leq F(\mathfrak{A} + \mathrm{X}) - F(\mathfrak{A}) \quad \text{for all } \mathrm{X} \in \text{Sym}(n).
\]

Furthermore, observe that the ellipticity condition $(\mathrm{A1})$ implies that \( \mathcal{S}_{\mathfrak{A}} \) constitutes a uniformly elliptic operator.

Additionally, the authors assume that either
\begin{itemize}
\item[\checkmark] $F$ is differentiable at $\mathbf{O}_n$, or
\item[\checkmark] $F$ is $1$-homogeneous, i.e., $F(\mu \mathrm{M}) = \mu F(\mathrm{M})$ for all $\mu > 0$ and $\mathrm{M} \in \text{Sym}(n)$.
\end{itemize}

Under these assumptions, in conjunction with condition \eqref{Condi-WuYu}, it follows that
\[
\mathrm{D}_{\mathrm{M}}F(\mathbf{O}_n) = \mathrm{tr}(\mathrm{M}).
\]

Many studies assume differentiability of the operator, as this condition facilitates the linearization process. Alternatively, some investigations focus on homogeneous operators, wherein appropriately rescaled versions of a solution also satisfy the same equation (cf. \cite{WH2022} and the references therein).

Finally, it is worth emphasizing that the assumptions of convexity and the normalization \( F(\mathbf{O}_n) = 0 \) play a pivotal role in the analysis of free boundary problems involving nonlinear operators (see \cite{WH2022} and the references therein). In particular, these conditions are crucial for deriving Schauder estimates for solutions and align with our present findings.

\subsection*{Acknowledgments}

FAPESP-Brazil has supported J. da Silva Bessa under Grant No. 2023/18447-3. J.V. da Silva has received partial support from CNPq-Brazil under Grant No. 307131/2022-0, Chamada CNPq/MCTI No. 10/2023 - Faixa B - Consolidated Research Groups under Grant No. 420014/2023-3,and by  FAPESP-Brazil under the Grant No.  2025/09344-1-Special Programs-Special Projects-First Projects-Call for Proposals (2025)-1st Cycle. J.V. da Silva and L. Ospina have been supported by FAEPEX-UNICAMP (Project No. 2441/23, Special Calls - PIND - Individual Projects, 03/2023).

\end{document}